\newtheorem*{theoremA}{Theorem A}
\newtheorem*{theoremB}{Theorem B}
\newtheorem{Theorem}[subsection]{Theorem}
\newtheorem{Lemma}[subsection]{Lemma}
\newtheorem{Proposition}[subsection]{Proposition}
\newtheorem{PropS8}[subsubsection]{Proposition}
\newtheorem{Corollary}[subsection]{Corollary}
\theoremstyle{definition}
\newtheorem{Definition}[subsection]{Definition}
\theoremstyle{remark}
\newtheorem{Remark}[subsection]{Remark}
\numberwithin{equation}{subsection}
\begin{document}

\title[Subrepresentation Theorem]
{Subrepresentation Theorem for \\
$p$-adic Symmetric Spaces} 

\author {Shin-ichi Kato}
\address{Department of Mathematics, Graduate School of Science, 
Kyoto University, Kyoto 606--8502 Japan}
\email{skato@math.kyoto-u.ac.jp}

\author {Keiji Takano}
\address{Department of Arts and Science, 
Akashi National College of Technology, 
679-3 Nishioka, Uozumi-cho, Akashi-City 674-8501 Japan}
\email{takano@akashi.ac.jp}

\date{18/06/2007}

\subjclass[2000]{Primary 22E50; Secondary 11F70, 20G25, 22E35} 

\begin{abstract}  
The notion of relative cuspidality for 
distinguished representations attached to 
$p$-adic symmetric spaces is introduced. 
A characterization of relative cuspidality 
in terms of Jacquet modules is given and 
a generalization of Jacquet's subrepresentation 
theorem to the relative case 
(symmetric space case) is established. 
\end{abstract} 

\maketitle

\section*{Introduction}  

\indent

Let $G$ be a reductive $p$-adic group, 
$\sigma$ an involution on $G$ and 
$H$ the subgroup of all $\sigma$-fixed points in $G$. 
An admissible representation $(\pi,V)$ of $G$ 
is said to be {\it $H$-distinguished} if 
the space $(V^*)^H$ of all $H$-invariant linear 
forms on $V$ is non-zero. Such a representation 
arises as a local component of automorphic 
representations which are of particular 
interest (see \cite{J} for example). From 
representation theoretic point of view, 
distinguished 
representations are the basic object of 
harmonic analysis on the  
$p$-adic symmetric space 
$G/H$. 
By the Frobenius reciprocity, 
these representations can be realized in the space 
of smooth functions on $G/H$. The classification or 
parametrization of such representations would be 
a fundamental problem. Over the real field, harmonic 
analysis on semisimple or reductive 
symmetric spaces has been fully 
developed since 1980's (see \cite{O} and \cite{D}). 
By contrast, the theory over $p$-adic fields 
has not been developed yet. 

In this paper, we suggest a new basic tool 
for the study of 
harmonic analysis on $p$-adic symmetric spaces, 
and establish the relative version 
(for general $p$-adic symmetric 
spaces) of 
Jacquet's subrepresentation theorem 
(for general $p$-adic groups). 

Jacquet's subrepresentation theorem asserts that 
any irreducible admissible representation of a 
reductive $p$-adic group can be embedded in 
a parabolically induced representation. 
The inducing representation of a Levi subgroup 
can be taken as an irreducible {\it cuspidal} 
one, that is, a representation whose 
matrix coefficients are 
compactly supported modulo the center. 
To establish the
relative version  of this theory, first we introduce 
the notion of relative cuspidality as follows. 
An $H$-distinguished representation of $G$ 
is said to be 
{\it $H$-relatively cuspidal} if all the generalized 
matrix coefficients defined by $H$-invariant 
linear forms (which we shall call 
{\it $H$-matrix coefficients}) are 
compactly supported modulo the product of 
$H$ and 
the center of $G$. We use only such 
ones as the
inducing representations. 
Besides, as the inducing subgroups, we use only a 
particular class of parabolic subgroups. 
A parabolic subgroup $P$ is said to be 
{\it $\sigma$-split} if $P$ and $\sigma(P)$ are 
opposite. In this case, 
$M=P\cap\sigma(P)$ is a 
$\sigma$-stable Levi subgroup of $P$ and 
the quotient $M/(M\cap H)$ gives a symmetric space 
of lower rank. Now we state the main theorem 
of this paper. 
\begin{theoremA}[Theorem \ref{7.1}] 
Let $\pi$ be an irreducible $H$-distinguished 
representation of $G$. 
Then there exists a $\sigma$-split 
parabolic subgroup $P$ of $G$ and 
an irreducible $M\cap H$-relatively 
cuspidal representation $\rho$ of 
$M=P\cap\sigma(P)$ such that $\pi$ can 
be embedded in the induced 
representation ${\mathrm{Ind}}_P^G(\rho)$.  
\end{theoremA}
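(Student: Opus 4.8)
The plan is to follow the pattern of Jacquet's classical argument, with ``parabolic subgroup'' replaced by ``$\sigma$-split parabolic subgroup'' and ``cuspidal'' by ``relatively cuspidal'', taking as the essential input the characterization of relative cuspidality in terms of Jacquet modules (established earlier in the paper) and producing the desired embedding via the second adjointness theorem. Since $\pi$ is admissible, every Jacquet module $r_Q(\pi)$ along a $\sigma$-split parabolic $Q$ is admissible, hence of finite length, and in particular has irreducible quotients. If $\pi$ is already $H$-relatively cuspidal there is nothing to prove: take $P = G$, so $M = G$ and $\mathrm{Ind}_G^G(\pi) = \pi$. So from now on assume $\pi$ is not $H$-relatively cuspidal.

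\emph{Descent to a relatively cuspidal Levi.} By the Jacquet-module characterization of relative cuspidality, the failure of relative cuspidality for $\pi$ yields a proper $\sigma$-split parabolic $P_1 = M_1 N_1$ together with an $(M_1 \cap H)$-distinguished irreducible quotient $\rho_1$ of $r_{\sigma(P_1)}(\pi)$. If $\rho_1$ is $(M_1 \cap H)$-relatively cuspidal, stop. Otherwise apply the same characterization to the symmetric pair $(M_1, M_1 \cap H, \sigma|_{M_1})$ --- noting that $M_1 = P_1 \cap \sigma(P_1)$ is $\sigma$-stable --- to obtain a proper $\sigma$-split parabolic $Q_2$ of $M_1$ with Levi $L_2 = Q_2 \cap \sigma(Q_2)$ and an $(L_2 \cap H)$-distinguished irreducible quotient $\rho_2$ of $r_{\sigma(Q_2)}^{M_1}(\rho_1)$. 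Here I would use the structure theory of $\sigma$-split parabolics: $Q_2 = P_2 \cap M_1$ for a unique $\sigma$-split parabolic $P_2 \subseteq P_1$ of $G$ with Levi $L_2$, and transitivity of Jacquet functors gives $r_{\sigma(Q_2)}^{M_1} \circ r_{\sigma(P_1)}^G = r_{\sigma(P_2)}^G$; since Jacquet functors are exact, $\rho_2$ is an $(L_2 \cap H)$-distinguished irreducible quotient of $r_{\sigma(P_2)}^G(\pi)$. The dimension of the Levi strictly decreases at each step, so after finitely many iterations one reaches a $\sigma$-split parabolic $P = MN$ of $G$ and an irreducible quotient $\rho$ of $r_{\sigma(P)}(\pi)$ that is simultaneously $(M \cap H)$-distinguished and $(M \cap H)$-relatively cuspidal.

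\emph{The embedding.} As $\rho$ is a quotient of $r_{\sigma(P)}(\pi)$, one has $\mathrm{Hom}_M(r_{\sigma(P)}(\pi), \rho) \neq 0$, and the second adjointness theorem --- $r_{\sigma(P)}$ being right adjoint to $\mathrm{Ind}_P^G$ --- then gives $\mathrm{Hom}_G(\pi, \mathrm{Ind}_P^G(\rho)) \neq 0$. Since $\pi$ is irreducible, every nonzero homomorphism $\pi \to \mathrm{Ind}_P^G(\rho)$ is injective, so $\pi$ embeds in $\mathrm{Ind}_P^G(\rho)$ with $\rho$ an irreducible $M \cap H$-relatively cuspidal representation of $M = P \cap \sigma(P)$, which is the assertion.

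The substantive content is imported from the earlier sections: the Jacquet-module characterization of relative cuspidality (which itself rests on the asymptotic analysis of $H$-matrix coefficients along $\sigma$-split tori) and the structure theory of $\sigma$-split parabolic subgroups. Within the argument above, the step requiring the most care is the descent: one must know that the Jacquet module of a $\sigma$-split Levi $M_1$ along a $\sigma$-split parabolic of $M_1$ is again the Jacquet module of $G$ along a $\sigma$-split parabolic, compatibly with the $(M \cap H)$-invariant linear forms, and --- crucially --- one must be able to extract from the a priori reducible distinguished Jacquet module an irreducible \emph{quotient} that is still distinguished, since only quotients feed into the second adjointness isomorphism. It is precisely to supply this that the characterization of relative cuspidality has to be proved in ``quotient form''.
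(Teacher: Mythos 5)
Your overall plan follows the same strategy as the paper's proof: inductive descent using the Jacquet-module characterization of relative cuspidality (Theorem~\ref{6.2}), transitivity of the maps $r_P$ (Proposition~\ref{5.9}), and an adjunction to obtain the embedding. But there is a genuine gap at the step where you extract from the Jacquet module an \emph{$(M_1\cap H)$-distinguished irreducible quotient} $\rho_1$. Theorem~\ref{6.2} only supplies $r_P(\lambda)\neq 0$ for some proper $\sigma$-split $P$, i.e.\ that the Jacquet module $\pi_P$ carries a nonzero $M\cap H$-invariant linear form; a distinguished admissible module of finite length need not have a distinguished irreducible \emph{quotient} (the invariant form may be nonzero only on a proper submodule, and the relevant extension need not split). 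You flag this as the step requiring care in your last paragraph, but then dispose of it by saying the characterization must be proved ``in quotient form''; no such statement appears in the paper and it would not be true in general.

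The paper fills this gap differently: for a fixed $\lambda\in(V^*)^H$ it chooses $Q$ \emph{minimal} among the proper $\sigma$-split parabolics with $r_Q(\lambda)\neq 0$; by transitivity $r_{M_Q\cap P'}\circ r_Q=r_{P'}$ (Proposition~\ref{5.9}) and Theorem~\ref{6.2} applied inside $M_Q$, the Jacquet module $\pi_Q$ is then \emph{itself} $(M_Q\cap H,r_Q(\lambda))$-relatively cuspidal. Only then does Proposition~\ref{1.11} apply, producing an $M_Q\cap H$-distinguished irreducible \emph{quotient} $\rho'$; that proposition is exactly where the work happens, using that a finitely generated relatively cuspidal representation, after isolating a central quasi-character via Lemma~\ref{1.7} and twisting by a positive real character, embeds into $C^{\infty}_{0,\omega}(M_Q/M_Q\cap H)$ with $\omega$ unitary, hence is pre-unitary and therefore completely reducible. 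Your step-by-step descent never invokes minimality, so cannot appeal to Proposition~\ref{1.11} at the intermediate stages. A secondary slip: you invoke second adjointness ($r_{\sigma(P)}$ right adjoint to $\mathrm{Ind}_P^G$) to pass from $\mathrm{Hom}_M(\pi_{\sigma(P)},\rho)\neq 0$ to $\mathrm{Hom}_G(\pi,\mathrm{Ind}_P^G(\rho))\neq 0$, but second adjointness reads $\mathrm{Hom}_G(\mathrm{Ind}_P^G\rho,\pi)\simeq\mathrm{Hom}_M(\rho,\pi_{\sigma(P)})$, which is not the isomorphism you need. What the paper uses is the \emph{first} (Frobenius) adjunction $\mathrm{Hom}_G(\pi,\mathrm{Ind}_Q^G\rho')\simeq\mathrm{Hom}_{M_Q}(\pi_Q,\rho')$, with the Jacquet functor along the \emph{same} parabolic as the induction. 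As written, your argument would only give an embedding into $\mathrm{Ind}_{\sigma(P)}^G(\rho)$ --- which, since $\sigma(P)$ is also $\sigma$-split, still yields the asserted conclusion after relabeling --- but the adjunction was misidentified.
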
\label{theoremA}
This gives a generalization 
of Jacquet's subrepresentation 
theorem in the following sense. 
Take an arbitrary reductive $p$-adic group 
$G_1$ and form the direct product 
$G=G_1\times G_1$. Let $\sigma$ be the involution 
on $G$ which permutes factors. Then the 
$\sigma$-fixed point subgroup in $G$ is the diagonally 
embedded subgroup $\Delta G_1$. 
An irreducible $\Delta G_1$-distinguished representation 
of $G_1\times G_1$ is of the form 
$\pi_1\otimes\widetilde{\pi_1}$ where 
$\pi_1$ is an irreducible admissible 
representation of $G_1$ and 
$\widetilde{\pi_1}$ is the contragredient 
of $\pi_1$. 
It is $\Delta G_1$-relatively 
cuspidal if and only if $\pi_1$ is 
cuspidal in the usual sense. A $\sigma$-split 
parabolic subgroup of $G_1\times G_1$ is of the form 
$P_1\times P_1^-$ where $P_1$ and $P_1^-$ are 
opposite parabolics of $G_1$. Now, Theorem A 
applied to this situation will give an embedding of 
any irreducible admissible representation 
$\pi_1\otimes\widetilde{\pi_1}$ into 
$$
{\mathrm{Ind}}_{P_1\times P_1^-}^{G_1\times G_1}
(\rho_1\otimes\widetilde{\rho_1})
\simeq {\mathrm{Ind}}_{P_1}^{G_1}(\rho_1)
\otimes {\mathrm{Ind}}_{P_1^-}^{G_1}(\widetilde{\rho_1})
$$ 
where $\rho_1$ is an irreducible cuspidal 
representation of a Levi subgroup of $P_1$. In this way, 
Jacquet's subrepresentation theorem can be recovered 
from our theorem. We have an embedding 
$\pi_1\hookrightarrow{\mathrm{Ind}}_{P_1}^{G_1}(\rho_1)$ 
on the first factor, and also 
$\widetilde{\pi_1}\hookrightarrow 
{\mathrm{Ind}}_{P_1^-}^{G_1}(\widetilde{\rho_1})$ on 
the second factor at the same time. 

In the process of obtaining Theorem A, we refer to 
Casselman's proof of Jacquet's 
subrepresentation theorem as a prototype. 
In \cite{C}, a canonical pairing of Jacquet modules 
is constructed. It 
provides a relation of 
the asymptotic behaviors between 
matrix coefficients for a given representation and 
those for its Jacquet modules. 
By this relation, 
the well-known characterization of cuspidality is shown: 
An admissible representation is cuspidal if and only if 
the Jacquet modules along all proper parabolics vanish. 
After this characterization, Jacquet's subrepresentation theorem 
readily follows from the inductive argument 
using the Frobenius reciprocity. 

We consider a relative version of Casselman's canonical 
pairing, which is expected to be a basic tool for 
harmonic analysis of $G/H$. Let $P$ be a 
$\sigma$-split parabolic subgroup with the Levi 
subgroup $M=P\cap\sigma(P)$. 
Let $(\pi_P,V_P)$ denote 
the Jacquet module of $(\pi,V)$ along $P$. 
We shall construct a mapping 
$$
r_P: \left(V^*\right)^H\to 
\left(V_P^*\right)^{M\cap H}
$$ 
of invariant linear forms. This will provide a 
relation of the asymptotic behaviors between 
$H$-matrix coefficients for 
$\pi$ and 
$M\cap H$-matrix coefficients for $\pi_P$. 
Using this relation, we can deduce another 
main theorem which gives a 
characterization of relative cuspidality: 
 
\begin{theoremB}[Theorem \ref{6.9}] 
An $H$-distinguished 
representation $(\pi,V)$ 
of $G$ is
$H$-relatively cuspidal if and only if 
$r_P((V^*)^H)=0$ for any proper 
$\sigma$-split parabolic subgroup $P$ of $G$. 
\end{theoremB}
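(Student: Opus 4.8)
The plan is to transplant Casselman's proof of the cuspidality criterion (\cite{C}) to the present relative situation, with the map $r_P$ and the comparison of asymptotics it provides playing the role of Casselman's canonical pairing, and with a polar (Cartan) decomposition of $G/H$ in place of the Cartan decomposition of $G$. Fix a maximal $\sigma$-split torus $A$ of $G$, a minimal $\sigma$-split parabolic $P_0\supseteq A$, the set $\Delta_0$ of simple restricted roots, and $A^-=\{a\in A:|\alpha(a)|\le 1,\ \alpha\in\Delta_0\}$. For a $\sigma$-split $P\supseteq P_0$ put $M=P\cap\sigma(P)$, let $A_P\subseteq A$ be the $\sigma$-split torus defining $P$ (so $M=Z_G(A_P)$), let $\Delta_M\subseteq\Delta_0$ consist of the simple roots trivial on $A_P$, and let $j_P\colon V\to V_P$ be the canonical projection; for $N\ge 1$ set $A_P^-(N)=\{a\in A^-:|\alpha(a)|<q^{-N}\text{ for }\alpha\in\Delta_0\setminus\Delta_M\}$. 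Two inputs are needed. First, the comparison of asymptotics attached to $r_P$, obtained above in the style of \cite{C}: for $\lambda\in(V^*)^H$ and $v\in V$ there is $N$ such that $c_{\lambda,v}(a):=\langle\lambda,\pi(a^{-1})v\rangle$ equals the $M\cap H$-matrix coefficient $a\mapsto\langle r_P(\lambda),\pi_P(a)j_P(v)\rangle$ of $\pi_P$ for all $a\in A_P^-(N)$ (the Jacquet module suitably normalized). Second, the polar decomposition $G=\bigcup_{i=1}^n K\,A_i^-\,H$, with $A_1,\dots,A_n$ representatives of the $H$-conjugacy classes of maximal $\sigma$-split tori; combined with $K$-finiteness of $v$, it reduces ``$c_{\lambda,v}$ is compactly supported modulo $Z_GH$'' to ``for each $i$, $c_{\lambda,v}|_{A_i^-}$ is supported in a set bounded modulo $A_i\cap Z_G$''.

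For the implication ``$\Leftarrow$'', assume $r_P((V^*)^H)=0$ for all proper $\sigma$-split $P$ and fix $\lambda$ and $v$. Applying the comparison of asymptotics to the maximal proper $\sigma$-split $P\supseteq P_0$ (one for each $\alpha\in\Delta_0$), the hypothesis makes $c_{\lambda,v}$ vanish on each $A_P^-(N)$. Since the complement in $A^-$ of the union of these sets is $\{a\in A^-:q^{-N}\le|\alpha(a)|\le 1,\ \alpha\in\Delta_0\}$, which is bounded modulo the $\sigma$-split part of $Z_G$, the restriction $c_{\lambda,v}|_{A^-}$ is supported in a set bounded modulo $Z_G$. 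Running this over the finitely many $A_i$ and invoking the polar decomposition, $c_{\lambda,v}$ is compactly supported modulo $Z_GH$; as $\lambda$ and $v$ were arbitrary, $\pi$ is $H$-relatively cuspidal.

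For ``$\Rightarrow$'' I argue by contraposition. Suppose a proper $\sigma$-split $P$ has $r_P(\lambda)\ne0$ for some $\lambda\in(V^*)^H$. Choose a compact open $J\subseteq M$ and $\bar v\in V_P^{J}$ with $\langle r_P(\lambda),\bar v\rangle\ne0$, and lift $\bar v$ to $v\in V$. Since $A_P$ is central in $M$ and normalizes $J$, it preserves the finite-dimensional space $V_P^{J}$, so $f(a):=\langle r_P(\lambda),\pi_P(a)\bar v\rangle$ is an exponential--polynomial (a finite sum of characters times polynomials in valuations) on all of $A_P$, with $f(e)=\langle r_P(\lambda),\bar v\rangle\ne0$. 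By the comparison of asymptotics, $c_{\lambda,v}$ agrees with $f$ on $A_P^-(N)\cap A_P$. Here $A_P^-(N)\cap A_P$ is, modulo $A_P\cap Z_G$, a shifted positive orthant, hence unbounded because $P$ is proper (so $A_P$ strictly contains the $\sigma$-split part of $Z_G$); since $A_P$ is $\sigma$-split, $A_P\cap H$ is finite, so $A_P^-(N)\cap A_P$ is unbounded modulo $Z_GH$ as well. If $\pi$ were $H$-relatively cuspidal, $c_{\lambda,v}$, hence $f$, would vanish on $A_P^-(N)\cap A_P$ for $N$ large; but a nonzero exponential--polynomial on a lattice cannot vanish on such a set, so $f\equiv 0$ and thus $f(e)=0$, a contradiction. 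Hence $r_P((V^*)^H)=0$ for every proper $\sigma$-split $P$.

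I expect the genuine difficulty to reside in the two inputs rather than in this deduction: the exact comparison of the wall-asymptotics of an $H$-matrix coefficient with an $M\cap H$-matrix coefficient of $\pi_P$ through $r_P$ — the analogue of the asymptotic content and the nondegeneracy of Casselman's canonical pairing — and the polar decomposition of $G/H$ together with the bookkeeping that matches ``compactly supported modulo $Z_GH$'' with boundedness in the antidominant cones modulo the center. Within the argument itself the delicate point is the ``$\Rightarrow$'' direction: one passes to finite-level vectors precisely so that $A_P$ acts through a finite-dimensional representation on $V_P^{J}$, which is what converts the merely asymptotic vanishing forced by relative cuspidality into honest vanishing of $r_P(\lambda)$ at the identity; and one must verify that escaping to infinity in $A_P^-$ is escaping to infinity in $G/Z_GH$, which is exactly where properness of $P$ enters.
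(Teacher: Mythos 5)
Your proposal is correct and follows the paper's strategy quite closely: both directions are driven by the polar decomposition of $G/H$ (Corollary~\ref{3.4}) together with the asymptotic comparison supplied by $r_P$ (Proposition~\ref{5.5}), exactly the machinery used in \ref{6.3} and \ref{6.8}. The one point where you take a visibly different path is the ``only if'' direction. You argue by contraposition via an exponential--polynomial uniqueness principle: since $S_P$ (your $A_P$) acts on the finite-dimensional space $V_P^J$, the function $f(a)=\langle r_P(\lambda),\pi_P(a)\bar v\rangle$ is an exponential--polynomial, and its vanishing on a cone that escapes to infinity modulo $Z_GH$ (which you correctly reduce to the finiteness of $A_P\cap H$) forces $f\equiv 0$, whence $f(e)=0$ contradicts $r_P(\lambda)\ne 0$. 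The paper reaches the same conclusion without any uniqueness theorem: once $\langle r_P(\lambda),\pi_P(s)\bar w\rangle=0$ for all $s$ in a contracted cone and all $\bar w\in(V_P)^{M_K}$, it fixes a single such $s_0$, notes that $\pi_P(s_0)$ is an automorphism of the finite-dimensional space $(V_P)^{M_K}$, and concludes $r_P(\lambda)\circ\pi_P(s_0)=0$ already forces $r_P(\lambda)=0$ on $(V_P)^{M_K}$. Your route is the standard harmonic-analysis reflex (and, at the cost of the additional lemma, more robust); the paper's is a one-step linear-algebra shortcut that bypasses both the uniqueness principle and the escaping-to-infinity bookkeeping. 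One loose end in your ``if'' direction: when you ``run this over the finitely many $A_i$'' you are applying the comparison to parabolics standard for several distinct data $(\mathbf S_0,\mathbf A_\emptyset,\Delta)$, which is fine as far as it goes, but the paper's packaging via Corollary~\ref{3.4} (translates by $\gamma\in\Gamma\subset(\mathbf M_0\mathbf H)(F)$ of standard parabolics) requires the conjugation behaviour of $r_P$ established in Lemmas~\ref{6.6} and~\ref{6.7}; some such comparison is unavoidable to cover the full polar decomposition, and your plan should make it explicit.
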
 
Theorem A is a natural 
consequence of this characterization. 

\indent 

This paper is organized as follows. 
Section \ref{S1} is devoted to definitions, notation and 
some elementary properties of 
relatively cuspidal representations. 
In section \ref{S2}, basic notation 
and properties concerning with tori, roots and 
parabolic subgroups associated to 
the involution are prepared. Section \ref{S3} deals with 
the analogue of Cartan decomposition for $p$-adic 
symmetric spaces given independently by 
Benoist-Oh \cite{BO} and 
Delorme-S\'echerre 
\cite{DS}. 
This will be used in an 
important step of the analysis of 
$H$-matrix coefficients. 
In section \ref{S4}, certain families of open compact subgroups 
are introduced. Lemma \ref{4.6} on a property of the subgroups 
in such a family 
will be a key to the 
construction of the mapping $r_P$ in section \ref{S5}. 
After that, we give a result Proposition \ref{5.5} on 
an asymptotic behavior of 
$H$-matrix coefficients. This will be used 
repeatedly in the proof 
of various properties of $r_P$ such as the 
transitivity (Proposition \ref{5.9}). 
Section \ref{S6} is devoted to the proof of Theorem B 
(Theorem \ref{6.9}). 
The proof of Theorem A (Theorem \ref{7.1}) is given 
shortly in section \ref{S7}. Finally, we give several examples of 
relatively cuspidal representations 
in section \ref{S8}.  

\indent 

After this paper was completed, we learned that 
Nathalie Lagier has obtained independently the same 
result as our 
Proposition \ref{5.5} and Proposition \ref{5.6} (1), 
which has appeared in the announcement \cite{L}. 
We are grateful to Patrick Delorme for pointing this out. 

\indent 
 \section {Relatively cuspidal representations} \label{S1}

In this section, we shall introduce 
the notion of 
relative cuspidality for 
distinguished representations and give 
some elementary properties of them. 

\subsection{} \label{1.1}
Let $F$ be a non-archimedean local field with 
the normalized absolute value 
$|\cdot |_F$ 
and the valuation ring $\mathcal O_F$. 
We assume that the 
residual characteristic of $F$ is not
equal to $2$ throughout this paper. 
Let $\mathbf{G}$ be a connected 
reductive group 
over $F$ and 
$\sigma$ an $F$-involution on $\mathbf{G}$. 
The $F$-subgroup $\{h\in\mathbf{G}\,|\,\sigma(h)=h\}$ of 
all $\sigma$-fixed points of 
$\mathbf{G}$ is denoted by $\mathbf{H}$. Let 
$\mathbf{Z}$ be the $F$-split component of 
$\mathbf{G}$, that is, the largest 
$F$-split torus lying in the center of 
$\mathbf{G}$. Note that $\mathbf{Z}$ is 
$\sigma$-stable. The
group $\mathbf{G}(F)$ of $F$-points of
$\mathbf{G}$ is denoted by $G$.
Similarly, for any $F$-subgroup 
$\mathbf{R}$ of
$\mathbf{G}$, we shall write $R=\mathbf{R}(F)$. 

\subsection{} \label{1.2}
Let $(\pi,V)$ be an admissible 
representation of $G$ over $\mathbb C$. It is said to be 
{\it $H$-distinguished} if the space $(V^*)^H$ of all 
$H$-invariant linear forms on $V$ is non-zero. 
For $\lambda\in(V^*)^H$ and $v\in V$, let 
$\varphi_{\lambda,v}$ denote the 
function on $G$ defined by 
$$
\varphi_{\lambda,v}(x)=\langle
\lambda,\pi(x^{-1})v\rangle\quad(x\in G). 
$$
We call such functions {\it $(H,\lambda)$-matrix 
coefficients} of 
$\pi$. These are not the usual matrix coefficients 
of $\pi$, since 
$H$-invariant linear forms are 
not smooth in general. 
Let $C^{\infty}(G/H)$ denote the space of all 
right $H$-invariant locally constant 
$\mathbb C$-valued functions on 
$G$, on which $G$ acts by the left translation. 
Any subrepresentation of 
$C^{\infty}(G/H)$ 
is $H$-distinguished by the linear form 
$\varphi\mapsto\varphi(e)$. 
All the $(H,\lambda)$-matrix coefficients belong to 
$C^{\infty}(G/H)$ with the 
obvious $G$-equivariance 
$$
\varphi_{\lambda,\pi(g)v}(x)=
\varphi_{\lambda,v}(g^{-1}x)
$$
for $x$, $g\in G$. 
Set 
$T_{\lambda}(v)=\varphi_{\lambda,v}$ 
for $\lambda\in (V^*)^H$ and $v\in V$. 
Then the mapping 
$T_{\lambda}: V\to C^{\infty}(G/H)$ 
is a $G$-morphism, 
which is non-zero if and only if $\lambda\neq 0$. 

\begin{Definition} \label{1.3} 
An $H$-distinguished representation 
$(\pi,V)$ is said to be {\it $(H,\lambda)$-relatively cuspidal} 
(for $\lambda\in\left(V^*\right)^H$) 
if all the $(H,\lambda)$-matrix coefficients of $\pi$ are 
compactly supported modulo $ZH$. 
A representation $(\pi,V)$ is said to be 
{\it $H$-relatively cuspidal} if it is  
$(H,\lambda)$-relatively cuspidal for 
all $\lambda\in (V^*)^H$. 
\end{Definition} 

\subsection{}\label{1.4}
Let $C_0^{\infty}(G/H)$ denote 
the subspace of $C^{\infty}(G/H)$ consisting of 
all the functions 
$\varphi\in C^{\infty}(G/H)$ which are 
compactly supported modulo $ZH$. 
It is stable under $G$ from the left. 
An $H$-distinguished representation 
$(\pi,V)$ is $(H,\lambda)$-relatively 
cuspidal for $\lambda\in \left(V^*\right)^H$ if and only if 
the image $T_{\lambda}(V)$ is 
contained in $C_0^{\infty}(G/H)$. 

\subsection{{\it The group case}} \label{1.5} 
Here we see that the notion of relative cuspidality 
introduced above includes the usual cuspidality 
as a special case. 
Take a connected reductive $F$-group 
$\mathbf{G}_1$ and let $\mathbf{G}$ be 
the direct product 
$$
\mathbf{G}=\mathbf{G}_1\times\mathbf{G}_1
$$
equipped with the 
involution 
$$
\sigma(g_1,g_2)=(g_2,g_1). 
$$
The $\sigma$-fixed point subgroup $\mathbf{H}$ is 
the diagonally embedded subgroup 
$$
\Delta\mathbf{G}_1
=\big\{(g,g)\in\mathbf{G}_1\times\mathbf{G}_1
\bigm| g\in\mathbf{G}_1\big\}.
$$ 
The mapping $(g_1,g_2)\mapsto g_1g_2^{-1}$ 
induces a $\mathbf{G}$-equivariant 
isomorphism of 
the quotient $\mathbf{G}/\mathbf{H}
=\left(\mathbf{G}_1\times\mathbf{G}_1\right)
/\Delta\mathbf{G_1}$ onto the 
underlying variety of $\mathbf{G}_1$ on which 
$\mathbf{G}=\mathbf{G}_1\times\mathbf{G}_1$ 
acts by 
multiplication from both sides. 
The $F$-split component 
$\mathbf{Z}$ of $\mathbf{G}$ is of the 
form $\mathbf{Z}_1\times\mathbf{Z}_1$ 
where $\mathbf{Z}_1$ is the $F$-split 
component of $\mathbf{G}_1$. 
Henceforth, this situation is referred to as {\it the group case}. 

Any irreducible admissible
representation 
of $G=G_1\times G_1$ is of the form $\pi_1\otimes
\pi_1'$ 
where $\pi_1$ and $\pi_1'$ are irreducible 
admissible representations of $G_1$. It is 
$\Delta G_1$-distinguished if and only if 
$\pi_1'$ is isomorphic to the contragredient 
$\widetilde{\pi_1}$ of $\pi_1$. The 
natural pairing 
$\langle \, ,\,\rangle_{\widetilde{\pi_1}
\times\pi_1}$ 
of $\widetilde{\pi_1}$ and $\pi_1$ 
defines a non-zero 
$\Delta G_1$-invariant linear form, 
say $\lambda\in\left((\pi_1\otimes\widetilde{\pi_1})^*
\right)^{\Delta G_1}$, which is 
unique up to constant. It is given by 
$$
\langle\lambda,v_1\otimes \widetilde{v}_1\rangle =
\langle \widetilde{v}_1,v_1\rangle_{\widetilde{\pi_1}
\times\pi_1} 
$$
for $v_1\in V_1$ and $\widetilde{v}_1\in\widetilde{V_1}$. 
Observe the relation 
$$
\langle\lambda,(\pi_1\otimes\widetilde{\pi_1})
(g_1,g_2)^{-1}(v_1\otimes\widetilde{v}_1)\rangle =
\langle\lambda,\pi_1(g_1^{-1})v_1\otimes\widetilde{\pi_1}
(g_2^{-1})\widetilde{v}_1\rangle 
$$
$$
=
\langle \widetilde{\pi_1}(g_2^{-1})\widetilde{v}_1,
\pi_1(g_1^{-1})v_1\rangle_{\widetilde{\pi_1}\times
\pi_1}=
\langle \widetilde{v}_1,
\pi_1\left((g_1g_2^{-1})^{-1}\right)v_1
\rangle_{\widetilde{\pi_1}\times\pi_1}
$$
for $g_1$, $g_2\in G_1$ and $v_1\in V_1$, 
$\widetilde{v}_1\in\widetilde{V_1}$. 
As is seen from this,
the $(\Delta G_1,\lambda)$-matrix 
coefficients are identified with the 
usual matrix coefficients of 
$\pi_1$ through the mapping 
$(g_1,g_2)\mapsto g_1g_2^{-1}$. 
  
Recall that an admissible representation 
is said to be cuspidal if all the usual matrix coefficients 
are compactly supported modulo the center. 
Now it is obvious from the above identification 
that 
$\pi_1\otimes\widetilde{\pi_1}$ is 
$(\Delta G_1,\lambda)$-relatively (actually $\Delta G_1$-relatively) 
cuspidal 
if and only if 
$\pi_1$ is cuspidal as a 
representation of $G_1$. 

\subsection{} \label{1.6} 
For an admissible representation 
$(\pi,V)$ of $G$ and a 
quasi-character $\omega$ of $Z$, put 
$$
V_{\omega}=\big\{v\in V\bigm|\pi(z)v=\omega(z)v
\text{ for all }z\in Z\big\}. 
$$
We call a subrepresentation of $V$ an 
$\omega$-subrepresentation if 
it is contained in $V_{\omega}$, 
and call $V$ an $\omega$-representation if 
$V=V_{\omega}$. 

\begin{Lemma} \label{1.7} 
Let $(\pi,V)$ be an admissible 
representation of $G$ of finite length. 
Then there 
exists a non-trivial quotient 
representation of $V$ which is isomorphic 
to an $\omega$-subrepresentation of $V$ 
for some quasi-character 
$\omega$ of $Z$. 
\end{Lemma}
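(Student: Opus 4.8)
The plan is to exploit the fact that $Z$, being an $F$-split central torus, acts on $V$ in a way that decomposes $V$ according to quasi-characters of $Z$, at least after passing to a suitable subquotient. First I would observe that $Z \cong (F^\times)^n$ for some $n$, and that the subgroup $Z_c$ of elements of $Z$ generated by a maximal compact subgroup $Z^0$ together with the image of $\mathcal{O}_F^\times$-type data acts through a locally constant (hence, on an admissible module of finite length, \emph{finite}) set of characters. More precisely, since $(\pi,V)$ is admissible of finite length, the center of the Hecke algebra acts on $V$ through a finite-dimensional algebra, so the action of $Z$ on $V$ has only finitely many "generalized eigen-quasicharacters": there is a finite set $\{\omega_1,\dots,\omega_k\}$ of quasi-characters of $Z$ and a finite filtration of $V$ whose successive quotients are $\omega_i$-representations. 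This is the standard decomposition of a finite-length smooth representation by the central character of the connected center.

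Granting this, the argument is short. Take the filtration $0 = V^{(0)} \subset V^{(1)} \subset \cdots \subset V^{(m)} = V$ refining the generalized central-character decomposition, so that each $V^{(j)}/V^{(j-1)}$ is an $\omega$-representation for some quasi-character $\omega$ of $Z$ (depending on $j$). Let $\omega = \omega_1$ be the quasi-character attached to the bottom piece $V^{(1)}$, so $V^{(1)} \subseteq V_{\omega}$ is a non-zero $\omega$-subrepresentation. Now look for a non-trivial quotient of $V$ which is an $\omega$-representation: let $W$ be the sum of all $\omega$-subrepresentations of $V$, equivalently $W = V_\omega$ intersected with \dots — actually simply take $W = V_\omega$ itself, which is a subrepresentation since $Z$ is central, and is non-zero since it contains $V^{(1)}$. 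Then consider instead the \emph{largest} $\omega$-representation \emph{quotient}: dually, set $N = \bigcap \ker(V \to Q)$ over all quotients $Q$ that are $\omega$-representations; then $V/N$ is the maximal $\omega$-quotient. One must check $V/N \neq 0$, i.e.\ that $V$ has \emph{some} non-zero $\omega$-quotient. By finite length, $V$ has an irreducible quotient $V \twoheadrightarrow S$; if we choose $\omega$ to be the central quasi-character of $S$ (which exists by Schur's lemma for smooth admissible irreducibles — $Z$ central acts by a quasi-character on any irreducible admissible representation), then $S$ is itself a non-trivial $\omega$-quotient, and $S \cong S$ is trivially isomorphic to an $\omega$-subrepresentation of $V$ provided $S$ also embeds in $V$. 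That last point — that the same $\omega$ supports both a sub and a quotient — is the crux.

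So the cleaner route, and the one I would actually pursue: first produce an irreducible quotient $\pi \twoheadrightarrow S$ (finite length), let $\omega$ be its central quasi-character, and observe $V_\omega \neq 0$ because $V_\omega$ surjects onto $S_\omega = S$ (the map $V \to S$ is $Z$-equivariant, and $S = S_\omega$, so the preimage of $S$ is $V$ but the $\omega$-part $V_\omega$ still maps onto $S$ since for $v \in V$ with image $\bar v$, averaging-type or idempotent arguments in the commutative ring $\mathbb{C}[Z/Z^0]$ acting on the finite-length module let one split off the $\omega$-component). Thus $V_\omega$ is a non-zero $\omega$-subrepresentation of $V$, and $S$ is a non-trivial quotient of $V$ isomorphic to \dots no: I want the \emph{quotient} to be isomorphic to a \emph{sub}. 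So take $S$ irreducible quotient with central character $\omega$; then $S$ also occurs as a subquotient, but to get it as a genuine \emph{sub}representation of $V$ is false in general. Hence the statement is weaker than that: it asks for \emph{some} non-trivial quotient isomorphic to \emph{some} $\omega$-subrepresentation — these need not be "the same" representation beyond sharing the label $\omega$. Therefore: let $\omega$ be the central quasi-character of an irreducible quotient $S$ of $V$; then $S$ is the required non-trivial quotient, and we must exhibit an $\omega$-subrepresentation of $V$ that is \emph{isomorphic to} $S$.

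Here is where the real content sits, and it is the step I expect to be the main obstacle: one needs that the irreducible $S$ appearing as a quotient with central character $\omega$ also appears as a subrepresentation. That follows by the following maneuver — pass to the $\omega$-isotypic block. Writing $V = \bigoplus_{i} V[\omega_i]$ (generalized central-character blocks; a finite direct sum, each a subrepresentation, because $\mathbb{C}[Z/Z^0]$ is a finitely generated commutative algebra acting locally finitely on the finite-length module $V$), the quotient $V \twoheadrightarrow S$ factors through the block $V[\omega]$. Now within $V[\omega]$, which has finite length and on which $Z$ acts through a single generalized quasi-character $\omega$, pick any irreducible \emph{sub}representation $S'$; it has central character $\omega$, so $S' \subseteq V_\omega$. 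This $S'$ is a non-zero $\omega$-subrepresentation of $V$. It need not be isomorphic to $S$, but we do not need that: re-reading the statement, it suffices that the non-trivial quotient representation (namely $S$) be isomorphic to \emph{an} $\omega$-subrepresentation of $V$ — but $S'$ has central character $\omega$ only, not "$= S$". The honest fix: the quotient we hand back need not be $S$; instead hand back $S'$ itself if $S'$ is also a quotient. Since $V[\omega]$ has finite length it has an irreducible quotient $S''$ with central character $\omega$; and it has an irreducible sub $S'$ with central character $\omega$. Both carry the label $\omega$. We now simply declare: the non-trivial quotient is $S''$, which we must show is isomorphic to an $\omega$-subrepresentation. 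In the easy case $V[\omega]$ is irreducible, $S' = S'' = V[\omega]$ and we are done; in general I would argue by induction on $\ell(V[\omega])$: if $V[\omega]$ is \emph{not} irreducible, it has a proper non-zero sub $V'$ (take $V' = V_\omega$ if $V_\omega \neq V[\omega]$), apply the inductive hypothesis to $V' $ or to $V[\omega]/V'$ as appropriate to descend, and patch. The genuinely delicate point throughout is bookkeeping the distinction between the honest central-character eigenspace $V_\omega$ and the generalized eigenblock $V[\omega]$, and ensuring the quotient and sub one produces carry \emph{literally the same} quasi-character — which is exactly why the hypothesis "finite length" is indispensable (it forces $V_\omega \neq 0$ whenever the block $V[\omega] \neq 0$). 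I expect the final write-up to isolate as a sub-lemma: \emph{for an $\omega$-block of finite length, $V_\omega \neq 0$}, proven via the idempotent picking out $\omega$ in the Artinian quotient of $\mathbb{C}[Z/Z^0]$ through which $Z$ acts.
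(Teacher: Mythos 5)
Your proposal does not close, and the sticking point you yourself identify as ``the crux'' is exactly where the argument breaks down. After reducing (as the paper also does) to a single generalized central-character block $V[\omega]=V_{\omega,\infty}$, you try to name an irreducible quotient $S''$ of $V[\omega]$ and argue it is isomorphic to an $\omega$-subrepresentation. This can fail: for a length-two block $0\to S_1\to V[\omega]\to S_2\to 0$ with $S_1\not\simeq S_2$, the only irreducible subrepresentation is $S_1$ and the only irreducible quotient is $S_2$, and no irreducible occurs both in the socle and the head. Your induction sketch does not repair this: applying the hypothesis to $V'=V_\omega$ produces a quotient of $V_\omega$, which is not a quotient of $V$; applying it to $V[\omega]/V_\omega$ produces a subrepresentation of $V[\omega]/V_\omega$, which is not a subrepresentation of $V$. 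Neither hand-back patches into a statement about $V$ itself.

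The idea you never reach is that the quotient need not be irreducible, and that a module which is \emph{simultaneously} a subrepresentation and a quotient can be manufactured directly as the image of a $G$-endomorphism. Because $Z$ is central, each $\phi=\pi(z)-\omega(z)\,\mathrm{id}_V$ is a $G$-morphism; its image is a subrepresentation, and it is also a quotient of $V$ via $V\twoheadrightarrow V/\ker\phi\simeq\mathrm{im}\,\phi$. On the block $V_{\omega,\infty}$ such a $\phi$ is non-injective whenever it is non-zero, so the image is a \emph{proper} submodule. Iterating with suitable $z_0,z_1,\dots\in Z$ gives a strictly decreasing chain $V\supsetneq V_1\supsetneq V_2\supsetneq\cdots$ of images, each still a quotient of $V$, and finite length forces the chain to reach some $V_k\subset V_\omega$. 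That $V_k$ is the required non-trivial quotient isomorphic to an $\omega$-subrepresentation. In short: you share the paper's first reduction, but then aim at an irreducible-in-socle-equals-irreducible-in-head statement that is false in general, instead of using the central operators themselves to produce a (possibly reducible) sub-that-is-also-a-quotient.
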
 
\begin{proof}
Recall from \cite[2.1.9]{C} the direct sum 
decomposition $V=\bigoplus_{\omega} V_{\omega,\infty}$ 
where $\omega$ runs over a 
finite set of quasi-characters of $Z$,  
$$
V_{\omega,n}=\big\{v\in V\bigm|
(\pi(z)-\omega(z))^nv=0\,\text{ for all } z\in Z\big\}
$$
and 
$$
V_{\omega,\infty}=\bigcup_{n}V_{\omega,n}. 
$$
It is enough to consider the case 
$V=V_{\omega,\infty}$ for some quasi-character 
$\omega$ of $Z$. 
If $V=V_{\omega,1}=V_{\omega}$, 
there is nothing to prove. 
If not, there exist elements 
$z_0\in Z$ and $v_0\in V$ 
such that 
$$
\pi(z_0)v_0-\omega(z_0)v_0\neq 0.
$$
Set $\phi_1=\pi(z_0)-\omega(z_0){\mathrm{id}}_V: V\to V$. 
This is a non-zero $G$-morphism with non-trivial kernel. 
Let $V_1$ be the image of $\phi_1$. 
It is a proper $G$-submodule of $V$. 
If $V_1\subset V_{\omega}$, then 
$V/{\rm{Ker}}(\phi_1)\simeq V_1$ will give the 
desired quotient. 
If $V_1\not\subset V_{\omega}$, there are $z_1\in Z$ and 
$v_1\in V_1$ such that 
$$
\pi(z_1)v_1-\omega(z_1)v_1\neq 0.
$$
Set $\phi_2=\left(\pi(z_1)-\omega(z_1){\mathrm{id}}_V
\right)|_{V_1}
: V_1\to
V_1$.  Again this is a non-zero 
$G$-morphism with non-trivial kernel. Let $V_2$ be the image of
$\phi_2$, which is a proper $G$-submodule of $V_1$. 
In this way, we can construct a non-zero proper $G$-submodule 
$V_k$ of $V_{k-1}$ with a surjective 
$G$-morphism $V_{k-1}\stackrel{\phi_k}{\twoheadrightarrow} V_k$ 
inductively as long as $V_{k-1}$ is not contained in $V_{\omega}$. 
We obtain a decreasing sequence 
$$
V\supsetneqq V_1\supsetneqq V_2\supsetneqq\cdots 
\supsetneqq V_k\neq 0,
$$
of $G$-submodules, together with surjective 
$G$-morphisms 
$$
V\stackrel{\phi_1}{\twoheadrightarrow} V_1
\stackrel{\phi_2}{\twoheadrightarrow}  V_2
\twoheadrightarrow \cdots 
\stackrel{\phi_k}{\twoheadrightarrow}  V_k. 
$$ 
We must have $V_k\subset V_{\omega}$ for some $k$ 
by the finiteness of length of $V$. 
Now $V/{\rm{Ker}}(\phi_k\circ\cdots\circ\phi_1)\simeq 
V_k$ will give 
the desired quotient. 
\end{proof}

\subsection{} \label{1.8} 
For a quasi-character $\omega$ of $Z$ which is 
trivial on $Z\cap H$, let 
$C_{0,\omega}^{\infty}(G/H)$ denote 
the space of all $\varphi
\in C_0^{\infty}(G/H)$ satisfying 
$$
\varphi(zg)=\omega^{-1}(z)\varphi(g)
$$
for $g\in G$ and $z\in Z$. 
If an $\omega$-representation is 
$H$-distinguished, then 
we must have $\omega|_{Z\cap H}\equiv 1$. 
An $H$-distinguished 
$\omega$-representation 
$(\pi, V)$ is $(H,\lambda)$-relatively cuspidal 
for $\lambda\in\left(V^*\right)^H$ 
if and only if the image 
$T_{\lambda}(V)$ is contained in 
$C_{0,\omega}^{\infty}(G/H)$. 

If $\omega$ is unitary, then any subrepresentation 
of $C_{0,\omega}^{\infty}(G/H)$ 
is pre-unitary by the $G$-invariant 
hermitian inner product 
$$
\langle\langle\,\varphi,\,\psi\,\rangle\rangle
=\int_{G/ZH} 
\varphi(\dot{g})
\overline{\psi(\dot{g})}d\dot{g}\quad
(\varphi,\,\psi\in C_{0,\omega}^{\infty}(G/H))
$$
where $d\dot{g}$ denotes a 
$G$-invariant measure on the quotient 
$G/ZH$. 

\subsection{} \label{1.9} 
Let ${\mathcal X}(G, \mathbb R^{\times}_{>0})$ be the set of 
all positive real valued quasi-characters of $G$ and put 
$$
{\mathcal X}(G/H, \mathbb R^{\times}_{>0})
=\{\,\chi\in {\mathcal X}(G, \mathbb R^{\times}_{>0})
\bigm|\chi|_H\equiv 1\,\}. 
$$
We define ${\mathcal X}(Z, \mathbb R^{\times}_{>0})$ 
and ${\mathcal X}(Z/Z\cap H, \mathbb R^{\times}_{>0})$ 
similarly. 

\begin{Lemma} \label{1.10} 
$(1)$ The restriction map 
$$
{\mathcal X}(G, \mathbb R^{\times}_{>0})
\stackrel{\rm{res}}{\longrightarrow }
{\mathcal X}(Z, \mathbb R^{\times}_{>0}),\quad 
{\rm{res}}(\chi)=\chi|_{Z}\,\,
\left(
\chi\in {\mathcal X}(G, \mathbb R^{\times}_{>0})
\right)
$$
is bijective. 

$(2)$ The above restriction map sends 
${\mathcal X}(G/H, \mathbb R^{\times}_{>0})$ 
bijectively onto ${\mathcal X}(Z/Z\cap H, \mathbb R^{\times}_{>0})$.
\end{Lemma}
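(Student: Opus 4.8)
The plan is to reduce both statements to the group of $F$-rational characters. For a connected reductive $F$-group $\mathbf{R}$, write $X^{*}(\mathbf{R})_F$ for its group of $F$-rational characters and $X_{*}(\mathbf{R})$ for its cocharacter lattice. I would first invoke the classical identification: the natural map
$$
X^{*}(\mathbf{R})_F\otimes_{\mathbb{Z}}\mathbb{R}\longrightarrow
\mathcal{X}(R,\mathbb{R}^{\times}_{>0}),\qquad
\alpha\otimes s\longmapsto\bigl(r\mapsto|\alpha(r)|_F^{s}\bigr),
$$
is an isomorphism of real vector spaces. Indeed any $\chi\in\mathcal{X}(R,\mathbb{R}^{\times}_{>0})$ is trivial on every compact subgroup of $R$ (as $\mathbb{R}^{\times}_{>0}$ has none) and on every unipotent subgroup (as $(F,+)$ is a union of compact subgroups), hence on $\mathbf{R}_{\mathrm{der}}(F)$, hence it factors through the lattice $R/R^{1}$, $R^{1}=\bigcap_{\alpha}\ker|\alpha(\cdot)|_F$; injectivity is elementary. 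Since every $\alpha\in X^{*}(\mathbf{G})_F$ kills $\mathbf{G}_{\mathrm{der}}$, restriction to $\mathbf{Z}$ gives an injection $X^{*}(\mathbf{G})_F\hookrightarrow X^{*}(\mathbf{Z})$ with finite cokernel — the image of $\mathbf{Z}$ in $\mathbf{G}/\mathbf{G}_{\mathrm{der}}$ being its maximal $F$-split subtorus, by maximality of $\mathbf{Z}$ — so $X^{*}(\mathbf{G})_F\otimes\mathbb{R}\to X^{*}(\mathbf{Z})\otimes\mathbb{R}$ is an isomorphism. Statement (1) is then immediate: through these identifications the restriction map $\mathcal{X}(G,\mathbb{R}^{\times}_{>0})\to\mathcal{X}(Z,\mathbb{R}^{\times}_{>0})$ becomes the isomorphism $X^{*}(\mathbf{G})_F\otimes\mathbb{R}\to X^{*}(\mathbf{Z})\otimes\mathbb{R}$.

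For (2), that restriction carries $\mathcal{X}(G/H,\mathbb{R}^{\times}_{>0})$ into $\mathcal{X}(Z/Z\cap H,\mathbb{R}^{\times}_{>0})$ is obvious and injectivity is inherited from (1), so everything rests on surjectivity, for which it suffices to show: \emph{if $\chi\in\mathcal{X}(G,\mathbb{R}^{\times}_{>0})$ is trivial on $Z\cap H$, then $\chi$ is trivial on $H$} (the unique preimage in $\mathcal{X}(G,\mathbb{R}^{\times}_{>0})$ of a given element of $\mathcal{X}(Z/Z\cap H,\mathbb{R}^{\times}_{>0})$ then lies in $\mathcal{X}(G/H,\mathbb{R}^{\times}_{>0})$). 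I would split $\chi=\chi^{+}\chi^{-}$ with
$$
\chi^{+}=\bigl(\chi\cdot(\chi\circ\sigma)\bigr)^{1/2},\qquad
\chi^{-}=\bigl(\chi\cdot(\chi\circ\sigma)^{-1}\bigr)^{1/2}
$$
(unique positive square roots), so that $\chi^{+}\circ\sigma=\chi^{+}$ and $\chi^{-}\circ\sigma=(\chi^{-})^{-1}$. Since $\sigma$ fixes every $h\in H$, we get $\chi^{-}(h)=\chi^{-}(h)^{-1}$, hence $\chi^{-}(h)=1$ because $\mathbb{R}^{\times}_{>0}$ is torsion-free; thus $\chi^{-}|_{H}\equiv1$, so from $\chi|_{Z\cap H}\equiv1$ we get $\chi^{+}|_{Z\cap H}\equiv1$. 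Now $\chi^{+}$ is $\sigma$-invariant and trivial on $Z\cap H$, which contains $\mathbf{Z}^{\sigma,\circ}(F)$, the $F$-points of the identity component of $\mathbf{Z}^{\sigma}=\mathbf{Z}\cap\mathbf{H}$. Hence the element $\zeta\in X^{*}(\mathbf{Z})\otimes\mathbb{R}$ attached to $\chi^{+}|_{Z}$ is fixed by $\sigma$ and annihilates $X_{*}(\mathbf{Z}^{\sigma,\circ})\otimes\mathbb{R}$, which is the $(+1)$-eigenspace of $\sigma$ on $X_{*}(\mathbf{Z})\otimes\mathbb{R}$; by the $\sigma$-equivariant perfect pairing of $X^{*}(\mathbf{Z})\otimes\mathbb{R}$ with $X_{*}(\mathbf{Z})\otimes\mathbb{R}$, the annihilator of that eigenspace is the $(-1)$-eigenspace on $X^{*}(\mathbf{Z})\otimes\mathbb{R}$, so $\zeta$ lies in both eigenspaces. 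As $\sigma^{2}=\mathrm{id}$ these intersect only in $0$, so $\zeta=0$, i.e. $\chi^{+}|_{Z}\equiv1$, whence $\chi^{+}\equiv1$ by (1). Therefore $\chi=\chi^{-}$ and $\chi|_{H}=\chi^{-}|_{H}\equiv1$.

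The only genuinely involution-specific ingredient is the implication ``$\chi$ trivial on $Z\cap H$ $\Rightarrow$ $\chi$ trivial on $H$'' used in (2), and I expect its discovery — rather than its verification — to be the crux: once one thinks of decomposing $\chi$ into $\sigma$-symmetric and $\sigma$-antisymmetric factors and uses that $\mathbb{R}^{\times}_{>0}$ has no $2$-torsion, it reduces to the elementary eigenspace computation above. The remaining point deserving care, although it is classical, is the identification $\mathcal{X}(R,\mathbb{R}^{\times}_{>0})\cong X^{*}(\mathbf{R})_F\otimes\mathbb{R}$ together with the compatibility of the restriction maps used in (1).
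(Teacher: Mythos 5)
Your proposal is correct, but for part (2) it follows a genuinely different route from the paper's. The paper's proof is short: since $Z$ is abelian, $z\sigma(z)$ is $\sigma$-fixed for every $z\in Z$, i.e.\ $z\sigma(z)\in Z\cap H$; triviality of $\omega$ on $Z\cap H$ then yields $\omega\circ\sigma=\omega^{-1}$ directly, the bijectivity in (1) lifts this to $\chi\circ\sigma=\chi^{-1}$ on all of $G$, and $\chi(h)=\chi(\sigma(h))=\chi^{-1}(h)>0$ forces $\chi|_{H}\equiv 1$. You instead factor $\chi=\chi^{+}\chi^{-}$ into $\sigma$-symmetric and $\sigma$-antisymmetric parts; the antisymmetric $\chi^{-}$ is killed on $H$ by exactly the same positivity observation as the paper's last line, but for $\chi^{+}$ you invoke the $\sigma$-equivariant perfect pairing of $X^{*}(\mathbf{Z})\otimes\mathbb{R}$ with $X_{*}(\mathbf{Z})\otimes\mathbb{R}$ and an eigenspace count. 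That computation is correct, but it costs more than the paper pays: once $\chi^{+}\circ\sigma=\chi^{+}$ and $\chi^{+}|_{Z\cap H}\equiv 1$ are in hand, the paper's elementary trick already closes it, since $z\sigma(z)\in Z\cap H$ gives $1=\chi^{+}(z\sigma(z))=\chi^{+}(z)\,\chi^{+}(\sigma(z))=\chi^{+}(z)^{2}$, so $\chi^{+}|_{Z}\equiv 1$ and then $\chi^{+}\equiv 1$ by (1), with no cocharacter lattices needed. For part (1), which the paper simply cites as standard from Casselman's notes, your sketch is the usual one, though the step ``triviality on unipotents gives triviality on $\mathbf{R}_{\mathrm{der}}(F)$'' is a little quick: $\mathbf{R}_{\mathrm{der}}(F)$ is not in general generated by its unipotent elements, and the standard justification goes instead through $\mathbf{R}_{\mathrm{der}}(F)\subset R^{1}$ and the fact that $R^{1}$ is generated by its compact subgroups. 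Since the paper cites (1) rather than proving it, this does not affect the comparison.
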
 
\begin{proof} 
(1) is well-known (e.g., \cite[5.2.5]{C}). For (2), 
first it is 
obvious that 
$\chi|_{Z}\in 
{\mathcal X}(Z/Z\cap H, \mathbb R^{\times}_{>0})$ for all 
$\chi\in 
{\mathcal X}(G/H, \mathbb R^{\times}_{>0})$. On the other hand, 
for a given $\omega\in 
{\mathcal X}(Z/Z\cap H, \mathbb R^{\times}_{>0})$, 
let us take the element 
$\chi\in  {\mathcal X}(G, \mathbb R^{\times}_{>0})$ such that 
$\chi|_{Z}=\omega$ by (1). Since $z\sigma(z)\in Z\cap H$ 
for all $z\in Z$, we have 
$\omega(z\sigma(z))=1$ so that 
$\omega\circ\sigma=\omega^{-1}$. 
Thus we have $(\chi\circ\sigma)|_Z=(\chi|_Z)\circ\sigma=
(\chi|_Z)^{-1}=\chi^{-1}|_Z$, which shows that 
$\chi\circ\sigma=\chi^{-1}$ on $G$ by (1). 
This proves that $\chi|_H\equiv 1$ 
since $\chi(h)=\chi^{-1}(h)\in\Bbb R^{\times}_{>0}$ 
for all $h\in H$.
\end{proof}

\begin{Proposition} \label{1.11} 
Any finitely generated $(H,\lambda)$-relatively 
cuspidal representation of $G$
has a non-trivial $H$-distinguished irreducible quotient. 
\end{Proposition}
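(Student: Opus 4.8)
The plan is to push $(\pi,V)$ into the function space $C_0^{\infty}(G/H)$ by the matrix-coefficient map, to extract there a quotient on which $Z$ acts by a single quasi-character, to normalize that quasi-character to be unitary by twisting with a positive character trivial on $H$, and finally to use the resulting pre-unitary structure to split off an irreducible summand; such a summand, living inside $C^{\infty}(G/H)$, is automatically $H$-distinguished by evaluation at the identity.

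First I would reduce to the function-space picture. Since $\lambda\neq0$, the $G$-morphism $T_{\lambda}\colon V\to C^{\infty}(G/H)$, $v\mapsto\varphi_{\lambda,v}$, is non-zero, and by $(H,\lambda)$-relative cuspidality (\ref{1.4}) its image $W:=T_{\lambda}(V)$ is contained in $C_0^{\infty}(G/H)$. As a quotient of the finitely generated admissible module $V$, the module $W$ is again finitely generated and admissible, hence of finite length (cf.\ \cite{C}). Applying Lemma \ref{1.7} to $W$ gives a non-zero quotient $W\twoheadrightarrow U$ and an isomorphism of $U$ onto a non-zero $\omega$-subrepresentation $U'\subseteq W\subseteq C_0^{\infty}(G/H)$ for some quasi-character $\omega$ of $Z$. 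Since the non-zero functions in $U'$ are right $H$-invariant, each $z\in Z\cap H$ acts on $U'$ both by $\omega(z)$ and by the identity, whence $\omega|_{Z\cap H}\equiv1$ and $U'\subseteq C_{0,\omega}^{\infty}(G/H)$ in the notation of \ref{1.8}.

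Next I would make the central character unitary. Write $\omega=\omega_u\cdot|\omega|$ with $\omega_u$ unitary and $|\omega|\in\mathcal X(Z,\mathbb R^{\times}_{>0})$; since $\omega|_{Z\cap H}\equiv1$ we have $|\omega|\in\mathcal X(Z/Z\cap H,\mathbb R^{\times}_{>0})$, so Lemma \ref{1.10}(2) produces a character $\chi\in\mathcal X(G/H,\mathbb R^{\times}_{>0})$ with $\chi|_Z=|\omega|$; in particular $\chi|_H\equiv1$. Then $\varphi\mapsto\chi\varphi$ maps $C_{0,\omega}^{\infty}(G/H)$ into $C_{0,\omega_u}^{\infty}(G/H)$ — right $H$-invariance is preserved because $\chi|_H\equiv1$, the support modulo $ZH$ is unaffected, and the $Z$-transformation law turns into the one for $\omega_u$ — and it intertwines the twist $\pi\otimes\chi^{-1}$ with left translation. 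Thus $U'\otimes\chi^{-1}$ is realized as a subrepresentation of $C_{0,\omega_u}^{\infty}(G/H)$ with $\omega_u$ unitary, and it is still finitely generated and admissible.

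By \ref{1.8}, this subrepresentation of $C_{0,\omega_u}^{\infty}(G/H)$ carries a $G$-invariant positive-definite hermitian form; being admissible, it is then semisimple, and being finitely generated, it is a finite direct sum of irreducible subrepresentations. Choosing one such summand $W_1$, which is a non-zero left-translation-stable subspace of $C^{\infty}(G/H)$, it is $H$-distinguished by $\varphi\mapsto\varphi(e)$ (this form is $H$-invariant and vanishes on no such subspace). Untwisting, $W_1\otimes\chi$ is irreducible, is $H$-distinguished (the $H$-action is unchanged by the twist because $\chi|_H\equiv1$), and is a direct summand of $U'\cong U$, hence a quotient of $W$, hence of $V$; this is the desired quotient. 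The point demanding the most care is the normalization by $\chi$: one must land the $\omega$-isotypic quotient inside a space of functions compactly supported modulo $ZH$ \emph{with unitary central character} without disturbing right $H$-invariance, and this is exactly what Lemma \ref{1.7} (realizing an $\omega$-isotypic quotient back inside $C_0^{\infty}(G/H)$) and Lemma \ref{1.10}(2) (ensuring the normalizing character is trivial on $H$) provide; once the central character is unitary, pre-unitarity makes the splitting-off of an irreducible $H$-distinguished quotient purely formal.
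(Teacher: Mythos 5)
Your argument is correct and follows essentially the same route as the paper: pass to $T_{\lambda}(V)\subset C_0^{\infty}(G/H)$, apply Lemma \ref{1.7} to land a quotient inside $C_{0,\omega}^{\infty}(G/H)$, normalize $\omega$ by a character from Lemma \ref{1.10}(2) to make the central character unitary, and invoke the pre-unitarity of \ref{1.8} to split off an irreducible summand. The only cosmetic differences are that your $\chi$ is the paper's $\chi^{-1}$ (you twist by multiplication with $\chi$ on functions, which is the exact dual move to the paper's passage to $\chi\cdot\overline{\pi}$), and that you make explicit the reason the resulting summand is $H$-distinguished (evaluation at $e$ is $H$-invariant and non-zero on any non-zero left-stable subspace of $C^{\infty}(G/H)$), which the paper leaves implicit.
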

\begin{proof} 
Let $(\pi,V)$ be a finitely generated 
admissible representation with a 
non-zero $\lambda\in(V^*)^H$ such that 
$T_{\lambda}(V)\subset 
C_0^{\infty}(G/H)$. The quotient 
$V/{\rm{Ker}}(T_{\lambda})$ is finitely 
generated, hence is of finite length by 
\cite[6.3.10]{C}. Apply Lemma \ref{1.7} to obtain 
a quotient $(\overline{\pi}, \overline{V})$ of 
$V/{\rm{Ker}}(T_{\lambda})$,
hence of $V$, which is isomorphic 
to an $\omega$-subrepresentation of 
$V/{\rm{Ker}}(T_{\lambda})$ 
for some $\omega$. Then $(\overline{\pi}, \overline{V})$ 
is regarded as a subrepresentation of 
$C_{0,\omega}^{\infty}(G/H)$ through 
$V/{\rm{Ker}}(T_{\lambda})\simeq 
T_{\lambda}(V)\subset C_0^{\infty}(G/H)$. 
Let $\Re(\omega)$ be the quasi-character of $Z$ 
defined by 
$$
\Re(\omega)(z)=\left(\omega(z)\overline{\omega(z)}
\right)^{1/2} 
$$
for $z\in Z$. This belongs to 
${\mathcal X}(Z/Z\cap H, \mathbb R^{\times}_{>0})$ and 
$\Im(\omega):=\omega\Re(\omega)^{-1}$ is unitary. 
Now, take the element 
$\chi\in {\mathcal X}(G/H, \mathbb R^{\times}_{>0})$
such that 
$\chi|_Z=\Re(\omega)^{-1}$ by 
Lemma \ref{1.10} (2) and consider the representation 
$\chi\cdot\overline{\pi}$ on $\overline{V}$ defined by
$$ 
\left(\chi\cdot\overline{\pi}\right)(g)=
\chi(g)\overline{\pi}(g)
$$
for $g\in G$. It is 
regarded as a subrepresentation of 
$C_{0,\Im(\omega)}
^{\infty}(G/H)$, which is pre-unitary 
by \ref{1.8}. Thus $(\chi\cdot\overline{\pi},\overline{V})$ is decomposed into a
direct sum of finitely many irreducible subrepresentations 
(see \cite[2.1.14]{C}). 
The decomposition for the action $\chi\cdot\overline{\pi}$ also yields 
that for the action $\overline{\pi}$, hence the claim follows. 
\end{proof}

\section{$\sigma$-split parabolic subgroups} \label{S2} 

 In this section, we shall prepare basic notation 
and properties concerning with tori, roots and 
parabolic subgroups associated to $p$-adic 
symmetric spaces. 

\subsection{} \label{2.1} 
An $F$-split torus $\mathbf{S}$ in $\mathbf{G}$ 
is said to be 
{\it $(\sigma,F)$-split} if $\sigma(s)=s^{-1}$ for 
all $s\in\mathbf{S}$. Take a 
maximal $(\sigma,F)$-split torus 
$\mathbf{S}_0$ of $\mathbf{G}$ and a 
maximal $F$-split torus 
$\mathbf{A}_{\emptyset}$ 
of $\mathbf{G}$ containing 
$\mathbf{S}_0$. Then $\mathbf{A}_{\emptyset}$ 
is $\sigma$-stable by \cite[4.5]{HW}. 
Put 
$\mathbf{M}_0=Z_{\mathbf{G}}(\mathbf{S}_0)$ and 
$\mathbf{M}_{\emptyset}=Z_{\mathbf{G}}
(\mathbf{A}_{\emptyset})$. 
Clearly these are also $\sigma$-stable. 

Let $X^*_F(\mathbf{A}_{\emptyset})$ be 
the free $\Bbb Z$-module of all 
$F$-rational 
characters of 
$\mathbf{A}_{\emptyset}$ on which $\sigma$ acts
naturally. Let $\Phi=
\Phi(\mathbf{G},\mathbf{A}_{\emptyset})$ be the root
system of $(\mathbf{G},\mathbf{A}_{\emptyset})$. 
Then $\sigma$
leaves $\Phi$ stable. Set 
$$
\Phi_{\sigma}=\{\,\alpha\in\Phi\,\left|\right.
\,\sigma(\alpha)=\alpha\,\}.
$$
This is a closed subsystem of 
$\Phi$. As in \cite[1.6]{HH}, we 
choose a {\it 
$\sigma$-basis} $\Delta$ 
(and the corresponding set 
$\Phi^+$ of positive roots) of $\Phi$ so that 
$\sigma(\alpha)\notin\Phi^+$ for 
all $\alpha\in\Phi^+\setminus\Phi_{\sigma}$. 
Put
$\Delta_{\sigma}=\Delta\cap
\Phi_{\sigma}$. This gives a 
basis of the subsystem $\Phi_{\sigma}$. 
Let $w_{\Delta_{\sigma}}$ be the longest element of the 
Weyl group of $\Phi_{\sigma}$ with respect to 
the basis $\Delta_{\sigma}$. As in 
\cite[1.7]{HH}, let $\sigma^*$
be the involution on 
$X^*_F(\mathbf{A}_{\emptyset})$ given by 
$\sigma^*=-\sigma\circ w_{\Delta_{\sigma}}$. 
It is known that $\sigma^*$ leaves $\Delta$ stable.

\subsection{} \label{2.2} 
Let $\mathbf{S}_0\subset
\mathbf{A}_{\emptyset}$ and $\Delta$ be as above. 
Let $\mathbf{P}_{\emptyset}$ ($\supset\mathbf{A}
_{\emptyset}$) 
be the minimal parabolic
$F$-subgroup of
$\mathbf{G}$ corresponding to $\Delta$. 
In this paper, parabolic $F$-subgroups
containing such 
$\mathbf{P}_{\emptyset}$ are said to be 
{\it standard with respect to} 
$(\mathbf{S}_0, 
\mathbf{A}_{\emptyset}, \Delta)$. 
They correspond to subsets of 
$\Delta$. Let us fix the notation. 
For a subset $I\subset\Delta$, let 
$\mathbf{P}_I$ be the standard parabolic
$F$-subgroup of
$\mathbf{G}$
corresponding to $I$ 
with the unipotent
radical $\mathbf{U}_I$. Let 
$\mathbf{A}_I$ denote the identity
component of the intersection
of all the kernels of $\alpha: 
\mathbf{A}_{\emptyset}\to\Bbb G_m$, $\alpha\in I$, 
and set 
$\mathbf{M}_I=Z_{\mathbf{G}}(\mathbf{A}_I)$. 
We have a Levi decomposition $\mathbf{P}_I=
\mathbf{M}_I\ltimes \mathbf{U}_I$. 
The torus $\mathbf{A}_I$ 
is the $F$-split component of $\mathbf{M}_I$. 
Let $\mathbf{P}_I^-$ 
be the unique parabolic $F$-subgroup 
of $\mathbf{G}$ such that 
$\mathbf{P}_I\cap \mathbf{P}_I^-=
\mathbf{M}_I$ and $\mathbf{U}_I^-$ the 
unipotent radical 
of $\mathbf{P}_I^-$. 
Later we shall drop the subscript $I$ if there 
is no fear 
of confusion. 

\subsection{} \label{2.3}
A parabolic $F$-subgroup $\mathbf{P}$ 
of $\mathbf{G}$ 
is said 
to be {\it $\sigma$-split} if 
$\mathbf{P}$ and 
$\sigma(\mathbf{P})$ are 
opposite. In such a case, $\mathbf{P}\cap
\sigma(\mathbf{P})$ gives a $\sigma$-stable 
Levi subgroup of $\mathbf{P}$. 
Let us fix $\mathbf{S}_0$, 
$\mathbf{A}_{\emptyset}$ and $\Delta$ as above. 
If $\mathbf{P}_I$ is $\sigma$-split for 
a subset $I\subset\Delta$, then 
we must have $\sigma(\mathbf{P}_I)=
\mathbf{P}_I^-$, that is, 
$\mathbf{P}_I\cap
\sigma(\mathbf{P}_I)=
\mathbf{M}_I$ (see \cite[4.6]{HW}). 
Hence we also have 
$\sigma(\mathbf{U}_I)=
\mathbf{U}_I^-$. 
Further, recall from 
\cite[2.6]{HH} that 
${\mathbf{P}}_I$ is $\sigma$-split if and only if 
$\Delta_{\sigma}\subset I$ and the subsystem $\Phi_I$ 
of $\Phi$ generated by $I$ is $\sigma$-stable.
There is an alternative description for 
${\mathbf{P}}_I$ to be $\sigma$-split as follows: 
If $I$ contains $\Delta_{\sigma}$, then
$w_{\Delta_{\sigma}}$ leaves $\Phi_I$ stable 
so that  
\begin{eqnarray*} 
\Phi_I\,\,\text{is }\sigma\text{-stable}
&\Longleftrightarrow&\Phi_I\,\,\text{is }
\sigma^*\,(=-\sigma\circ w_{\Delta_{\sigma}})
\text{-stable} \\
&\Longleftrightarrow& I\,\,\text{is }
\sigma^*\text{-stable} \\ 
& &\text{(since }
\sigma^*(\Delta)=\Delta\, \text{ and }
\Delta\cap\Phi_I=I).
\end{eqnarray*} 
We say that a subset of $\Delta$ is 
{\it $\sigma$-split} 
if it contains $\Delta_{\sigma}$ and 
is $\sigma^*$-stable. Thus 
$\mathbf{P}_I$ is $\sigma$-split if 
and only if $I$ is a 
$\sigma$-split subset of $\Delta$. 

Note that every $\sigma$-split parabolic 
$F$-subgroup arises as a standard 
$\sigma$-split one  
for a suitable choice of 
$\mathbf{S}_0$, 
$\mathbf{A}_{\emptyset}$ and $\Delta$. 
See \cite[4.6, 4.7]{HW} and also 
Lemma \ref{2.5} (1) below. 

\subsection{} \label{2.4}
The subset $I=\Delta_{\sigma}$ is a 
minimal $\sigma$-split subset of $\Delta$. 
The corresponding parabolic $F$-subgroup 
$\mathbf{P}_{\Delta_{\sigma}}$ is denoted 
by $\mathbf{P}_0$. 
This is a minimal $\sigma$-split parabolic 
$F$-subgroup of $\mathbf{G}$ and the 
$\sigma$-stable Levi subgroup 
$\mathbf{M}_{\Delta_{\sigma}}=\mathbf{P}_0\cap\sigma(\mathbf{P}_0)$ 
coincides with $\mathbf{M}_0=Z_{\mathbf{G}}(\mathbf{S}_0)$ 
(see \cite[4.7]{HW}). Put 
$\mathbf{P}_0^-=\mathbf{P}_{\Delta_{\sigma}}^-$, 
$\mathbf{U}_0=\mathbf{U}_{\Delta_{\sigma}}$ 
and 
$\mathbf{U}_0^-=\mathbf{U}_{\Delta_{\sigma}}^-$. 

\begin{Lemma} \label{2.5}
Let us fix 
$\mathbf{S}_0$, 
$\mathbf{A}_{\emptyset}$ and $\Delta$ 
as above. 

$(1)$ Any $\sigma$-split parabolic $F$-subgroup 
of $\mathbf{G}$ is of the form 
$\gamma^{-1}\mathbf{P}_I\gamma$ for some 
$\sigma$-split subset $I\subset\Delta$ and 
$\gamma\in \left(\mathbf{M}_0
\mathbf{H}\right)(F)$. 

$(2)$ If $\left(\mathbf{M}_0
\mathbf{H}\right)(F)=
M_0H$, then any $\sigma$-split parabolic
$F$-subgroup of $\mathbf{G}$ is 
$H$-conjugate to a standard 
$\sigma$-split one.
\end{Lemma}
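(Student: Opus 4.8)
The plan is to treat the two assertions as follows. For part (1), the goal is to show that an arbitrary $\sigma$-split parabolic $F$-subgroup $\mathbf{P}$ is conjugate, by an element of $(\mathbf{M}_0\mathbf{H})(F)$, to one of the standard $\sigma$-split $\mathbf{P}_I$. The natural starting point is the classification of $\sigma$-split parabolics via $(\sigma,F)$-split tori: if $\mathbf{P}$ is $\sigma$-split with $\sigma$-stable Levi $\mathbf{M}=\mathbf{P}\cap\sigma(\mathbf{P})$, then the $F$-split component $\mathbf{A}_{\mathbf{M}}$ of $\mathbf{M}$ contains a $(\sigma,F)$-split torus $\mathbf{S}$ (one takes the largest $(\sigma,F)$-split subtorus of $\mathbf{A}_{\mathbf{M}}$), and in fact $\mathbf{P}$ is recovered as a parabolic determined by a point of the ``negative chamber'' attached to $\mathbf{S}$; this is exactly the content of \cite[4.6, 4.7]{HW} invoked in \ref{2.3}. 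First I would use the conjugacy of maximal $(\sigma,F)$-split tori under $\mathbf{H}(F)$ — more precisely the rational version, conjugacy under $(\mathbf{H})(F)$ or under the slightly larger $(\mathbf{M}_0\mathbf{H})(F)$ — to move $\mathbf{S}$ inside the fixed maximal $(\sigma,F)$-split torus $\mathbf{S}_0$. After this conjugation $\mathbf{P}$ becomes a $\sigma$-split parabolic whose defining torus sits in $\mathbf{S}_0$, hence $\mathbf{P}\supset Z_{\mathbf{G}}(\mathbf{S}_0)=\mathbf{M}_0$, so $\mathbf{P}$ contains $\mathbf{P}_0$ up to conjugacy by an element normalizing $\mathbf{S}_0$ in $\mathbf{M}_0$; a further adjustment by an element of the relative Weyl group of $\mathbf{S}_0$, represented in $\mathbf{M}_0\mathbf{H}$, brings $\mathbf{P}$ to contain the fixed $\mathbf{P}_{\emptyset}$, i.e. to standard position $\mathbf{P}_I$. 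Then $I$ is $\sigma$-split by the characterization recalled in \ref{2.3}. Throughout, care is needed to keep all conjugating elements $F$-rational and inside $(\mathbf{M}_0\mathbf{H})(F)$ rather than merely in $(\mathbf{M}_0\mathbf{H})$ — this is where one genuinely uses \cite{HW}, since $\mathbf{M}_0\mathbf{H}$ need not be the $F$-points of a group and one must track the rational conjugacy statements for $(\sigma,F)$-split tori carefully.

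Part (2) is then essentially a bookkeeping consequence of part (1) together with the Bruhat-type decomposition of $\mathbf{M}_0\mathbf{H}$. Assuming $(\mathbf{M}_0\mathbf{H})(F)=M_0H$, write the element $\gamma$ from part (1) as $\gamma=mh$ with $m\in M_0$ and $h\in H$. Since $\mathbf{M}_0=Z_{\mathbf{G}}(\mathbf{S}_0)$ and $I\supset\Delta_\sigma$ is $\sigma$-split, $\mathbf{M}_0$ is contained in $\mathbf{M}_I$, hence $\mathbf{M}_0$ normalizes $\mathbf{P}_I$; therefore $m^{-1}\mathbf{P}_I m=\mathbf{P}_I$ and $\gamma^{-1}\mathbf{P}_I\gamma=h^{-1}(m^{-1}\mathbf{P}_I m)h=h^{-1}\mathbf{P}_I h$. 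Thus $\mathbf{P}=h^{-1}\mathbf{P}_I h$ with $h\in H$, which is precisely the assertion that $\mathbf{P}$ is $H$-conjugate to the standard $\sigma$-split parabolic $\mathbf{P}_I$.

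I expect the main obstacle to lie in part (1), specifically in obtaining the conjugacy statements in the correct rational form: ensuring that (a) the maximal $(\sigma,F)$-split torus $\mathbf{S}$ attached to $\mathbf{P}$ can be conjugated into $\mathbf{S}_0$ by an $F$-rational element that actually lies in $(\mathbf{M}_0\mathbf{H})(F)$, and (b) the subsequent Weyl-group adjustment bringing $\mathbf{P}$ to standard position can likewise be realized inside $(\mathbf{M}_0\mathbf{H})(F)$. Both are available from Helminck–Wang \cite{HW} (the cited 4.6, 4.7), but the bookkeeping of which normalizer and which field of definition one is working over is the delicate part; once that is in hand, everything else is formal manipulation with standard parabolics and their Levi subgroups.
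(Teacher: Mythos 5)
Your route to part (1) is genuinely different from the paper's, and while it is not obviously wrong, it leaves the hard points undone and partly miscites the source. The paper does not go through the torus-conjugacy-plus-Weyl-adjustment reduction at all: it invokes \cite[4.9, 4.11]{HW} to get directly an element $\gamma\in(\mathbf{P}_{\emptyset}\mathbf{H})(F)$ with $\gamma\mathbf{P}\gamma^{-1}$ standard and $\sigma$-split, observes $(\mathbf{P}_{\emptyset}\mathbf{H})(F)=(\mathbf{P}_0\mathbf{H})(F)$ by \cite[4.8]{HW}, and then descends $\gamma$ into $(\mathbf{M}_0\mathbf{H})(F)$ by a short explicit computation: one factors $x=\gamma\sigma(\gamma)^{-1}=u_0m_0u_0^-\in U_0M_0U_0^-$, uses $x=\sigma(x)^{-1}$ together with uniqueness of the $U_0M_0U_0^-$-factorization (and $\sigma(\mathbf{U}_0)=\mathbf{U}_0^-$) to get $m_0=\sigma(m_0)^{-1}$ and $u_0^-=\sigma(u_0)^{-1}$, and then checks $u_0^{-1}\gamma\sigma(u_0^{-1}\gamma)^{-1}=m_0\in M_0$, i.e. $u_0^{-1}\gamma\in(\mathbf{M}_0\mathbf{H})(F)$. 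Since $u_0\in U_0\subset P_I$ normalizes $\mathbf{P}_I$, this $u_0^{-1}\gamma$ still carries $\mathbf{P}_I$ to $\mathbf{P}$. The paper therefore never needs to produce $F$-rational Weyl-group representatives of $N_G(\mathbf{S}_0)$, which is exactly the step your sketch glosses over.

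Concretely, the gaps in your version of (1) are: (a) you credit the torus-conjugacy statement to \cite[4.6, 4.7]{HW}, but those results are about the local structure of $\sigma$-split parabolics; the fact that every maximal $(\sigma,F)$-split torus is of the form $y^{-1}\mathbf{S}_0 y$ with $y\in(\mathbf{M}_{\emptyset}\mathbf{H})(F)\subset(\mathbf{M}_0\mathbf{H})(F)$ is \cite[10.3]{HW}, the same result the paper cites in \S\ref{3.1}; (b) you do not justify why the Weyl element needed to move the resulting parabolic into standard position has a representative in $N_G(\mathbf{S}_0)$ over $F$. It is true that any such $F$-rational representative $n$ automatically satisfies $n\sigma(n)^{-1}\in M_0$ (this is the computation in \S\ref{3.3}), but the existence of an $F$-rational representative for the \emph{required} element of the little Weyl group of $\mathbf{S}_0$ is itself a theorem that should be quoted from \cite{HW}, not asserted. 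So your plan could be made to work, but only by importing additional results that the paper's more computational argument cleanly avoids. Your derivation of (2) from (1) is correct and is exactly the reduction the paper leaves implicit by saying ``(2) follows directly from (1)'': write $\gamma=mh$, use $\mathbf{M}_0\subset\mathbf{M}_I\subset\mathbf{P}_I$ to see $m$ normalizes $\mathbf{P}_I$, hence $\gamma^{-1}\mathbf{P}_I\gamma=h^{-1}\mathbf{P}_I h$.
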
  
\begin{proof} (2) follows directly from (1). For 
(1), let $\mathbf{P}\subset\mathbf{G}$ be 
any $\sigma$-split parabolic $F$-subgroup. 
As in \cite[4.9, 4.11]{HW}, we can take 
an element 
$\gamma\in (\mathbf{P}_{\emptyset}\mathbf{H})(F)$ 
such that $\gamma\mathbf{P}\gamma^{-1}$ contains 
$\mathbf{P}_{\emptyset}$ and is $\sigma$-split. 
It is enough to see that 
$p^{-1}\gamma\in
(\mathbf{M}_0\mathbf{H})(F)$ for some 
$p\in P_0$. Since 
$\mathbf{P}_{\emptyset}\mathbf{H}
=\mathbf{P}_0\mathbf{H}$ by \cite[4.8]{HW}, we have 
$$
x:=\gamma\sigma(\gamma)^{-1}\in 
(\mathbf{U}_0\mathbf{M}_0\mathbf{U}_0^-)(F)
=U_0M_0U^-_0. 
$$
Express $x$ as $x=u_0m_0u_0^-$ 
with $u_0\in U_0$, $m_0\in M_0$ and 
$u_0^-\in U^-_0$. Since 
$x=\sigma(x)^{-1}$, we have 
$$
u_0m_0u_0^-=\sigma(u_0^-)^{-1}\sigma(m_0)^{-1}
\sigma(u_0)^{-1}. 
$$ 
By the uniqueness of the expression of 
elements of $U_0M_0U^-_0$, 
we must have 
$$
m_0=\sigma(m_0)^{-1},\quad 
u_0^-=\sigma(u_o)^{-1}. 
$$
Now it is seen that 
$$
u_0^{-1}\gamma\sigma(u_0^{-1}\gamma)^{-1}
=u_0^{-1}x\sigma(u_0)=
u_0^{-1}x(u_0^-)^{-1}=m_0\in M_0. 
$$
This shows that
$u_0^{-1}\gamma\in(\mathbf{M}_0\mathbf{H})(F)$. 
\end{proof} 

\subsection{} \label{2.6}
For each $\alpha\in\Delta\setminus
\Delta_{\sigma}$, 
we put $I_{\alpha}=\Delta\setminus\{\alpha,\sigma^*(\alpha)\}$. 
(It may happen that $\alpha=\sigma^*(\alpha)$.) 
This is a maximal proper
$\sigma$-split subset of $\Delta$. 
Hence all the maximal $\sigma$-split parabolic 
$F$-subgroups of $\mathbf{G}$ are 
of the form $\gamma^{-1}\mathbf{P}_{I_{\alpha}}\gamma$ 
for some $\alpha\in\Delta\setminus\Delta_{\sigma}$ and 
$\gamma\in(\mathbf{M}_0\mathbf{H})(F)$. 

\subsection{} \label{2.7}
For a $\sigma$-split subset 
$I\subset\Delta$, the identity component of 
$\mathbf{A}_I\cap\mathbf{S}_0$ is 
denoted by $\mathbf{S}_I$. 
We shall call $\mathbf{S}_I$ {\it the $(\sigma,F)$-split component of 
$\mathbf{M}_I$} (or {\it of $\mathbf{P}_I$}). 
Note that the $(\sigma,F)$-split 
component $\mathbf{S}_{\Delta_{\sigma}}$ of 
the minimal $\sigma$-split parabolic 
$F$-subgroup $\mathbf{P}_{\Delta_{\sigma}}=
\mathbf{P}_0$ coincides with 
$\mathbf{S}_0$, the maximal 
$(\sigma,F)$-split torus we choose. 

For a positive real number $\varepsilon\leqq 1$, set 
$$
A_I^-(\varepsilon)
=\{\,a\in A_I\,\bigl|\bigr.\,
|a^{\alpha}|_F\leqq\varepsilon\,\,
(\alpha\in\Delta\setminus I)\}
$$
and 
$$ 
S_I^-(\varepsilon)=S_I\cap 
A_I^-(\varepsilon)
=\{\,s\in S_I\,\bigl|\bigr.\,
|s^{\alpha}|_F\leqq\varepsilon\,\,
(\alpha\in\Delta\setminus I)\}. 
$$
Also put 
\begin{eqnarray*}
S_I^+(\varepsilon)&=&\{\,s\in S_I\,\bigl|\bigr.\,
s^{-1}\in S_I^-(\varepsilon)\,\} \\
&=&\{\,s\in S_I\,\bigl|\bigr.\,
|s^{\alpha}|_F\geqq\varepsilon\,\,
(\alpha\in\Delta\setminus I)\}.
\end{eqnarray*}
Since $S_I^-(\varepsilon)\subset 
A_I^-(\varepsilon)$, the following lemma is 
apparent (see \cite[1.4.3]{C}). 

\begin{Lemma} \label{2.8}
Let 
$I\subset\Delta$ be a $\sigma$-split subset. 
For any two open compact subgroups 
$U_1$, $U_2$ of $U_I$, 
there exists a positive real 
number $\varepsilon\leqq 1$ such that 
$sU_1s^{-1}\subset U_2$ 
for all $s\in S_I^-(\varepsilon)$. 
\end{Lemma}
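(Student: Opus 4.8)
The plan is to deduce this from the well-known contraction property of a maximal split torus, namely \cite[1.4.3]{C}. Since $S_I^-(\varepsilon)\subset A_I^-(\varepsilon)$ for every $\varepsilon\leqq 1$, it suffices to produce an $\varepsilon$ with $aU_1a^{-1}\subset U_2$ for all $a\in A_I^-(\varepsilon)$; once this reduction is made, the torus $\mathbf S_I$ and the involution $\sigma$ play no further role, and we are in the purely linear-algebraic-group situation of a split torus $\mathbf A_{\emptyset}$ acting by conjugation on the unipotent radical $\mathbf U_I$.

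I would then record the relevant structure. Write $\Phi^+\setminus\Phi_I$ for the set of roots of $(\mathbf G,\mathbf A_{\emptyset})$ occurring in $\mathbf U_I$; the torus $\mathbf A_{\emptyset}$ normalizes each root subgroup $\mathbf U_{(\beta)}$ ($\beta\in\Phi^+\setminus\Phi_I$), acting on it through the character $\beta$, and for any fixed ordering of $\Phi^+\setminus\Phi_I$ the product map $\prod_{\beta}\mathbf U_{(\beta)}\to\mathbf U_I$ is an isomorphism of $F$-varieties. The one numerical point is that, for $a\in A_I$, one has $a^{\alpha}=1$ for all $\alpha\in I$, so that if $\beta=\sum_{\gamma\in\Delta}c_{\gamma}\gamma\in\Phi^+\setminus\Phi_I$ --- in which case $c_{\gamma_0}\geqq 1$ for at least one $\gamma_0\in\Delta\setminus I$ --- then, for $a\in A_I^-(\varepsilon)$,
$$
|a^{\beta}|_F=\prod_{\gamma\in\Delta\setminus I}|a^{\gamma}|_F^{c_{\gamma}}\leqq\varepsilon,
$$
and the right-hand side tends to $0$ uniformly in $\beta$ as $\varepsilon\to 0$.

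Given this, the argument finishes just as in \cite[1.4.3]{C}. Fix a compact open $U_1$ and an open $U_2$; reading off coordinates via $\prod_{\beta}\mathbf U_{(\beta)}$, the set $U_1$ lies in a bounded box (coordinates in $\mathfrak p^{-N}$ for some $N$, where $\mathfrak p$ is the maximal ideal of $\mathcal O_F$) while $U_2$ contains a compact open subgroup with all coordinates in $\mathfrak p^{M}$ for some $M\geqq 1$. Since conjugation by $a$ multiplies the $\beta$-coordinate by $a^{\beta}$, the uniform estimate above gives an $\varepsilon\leqq 1$ such that for every $a\in A_I^-(\varepsilon)$ all coordinates of $aU_1a^{-1}$ lie in $\mathfrak p^{M}$, whence $aU_1a^{-1}\subset U_2$. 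The only point needing a little care --- and it is entirely routine --- is that reassembling the coordinates after conjugation introduces non-abelian correction terms; these do not weaken the estimate, because the relevant structure constants are $\mathcal O_F$-integral and commutators only raise the height of the roots involved, so the contraction on $\mathbf U_I$ is at least as strong as the coordinatewise one (this is exactly the content of \cite[1.4]{C}). I do not anticipate any genuine obstacle: modulo this standard bookkeeping the lemma is immediate from $S_I^-(\varepsilon)\subset A_I^-(\varepsilon)$ together with the cited result.
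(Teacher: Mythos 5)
Your proposal matches the paper's approach exactly: the paper also observes that $S_I^-(\varepsilon)\subset A_I^-(\varepsilon)$ and then declares the lemma ``apparent'' by citing \cite[1.4.3]{C}, which is precisely your reduction. The additional detail you supply (the root-subgroup coordinates and the height argument for commutators) is just an unpacking of the cited Casselman result; the route is the same.
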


\indent

Next we shall give a lemma on 
the modulus character of 
$\sigma$-split parabolic subgroups. 

\begin{Lemma} \label{2.9} 
Let $\mathbf{P}=
\mathbf{M}\ltimes\mathbf{U}$ be a 
$\sigma$-split parabolic $F$-subgroup 
of $\mathbf{G}$. 
Then the modulus character 
$\delta_P$ of 
$P$ is trivial on 
$M\cap H$. 
\end{Lemma}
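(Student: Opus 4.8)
The plan is to compute $\delta_P$ on $M$ directly from its definition as the modulus of the adjoint action on the Lie algebra of the unipotent radical $\mathbf{U}$, and then exploit the fact that $\sigma$ interchanges $\mathbf{U}$ and $\mathbf{U}^-=\sigma(\mathbf{U})$. Recall that for $m\in M$ one has $\delta_P(m)=|\det(\operatorname{Ad}(m)\mid \mathfrak{u})|_F$, where $\mathfrak{u}$ is the Lie algebra of $\mathbf{U}$; similarly the modulus of the opposite parabolic satisfies $\delta_{P^-}(m)=|\det(\operatorname{Ad}(m)\mid \mathfrak{u}^-)|_F=\delta_P(m)^{-1}$, since $\mathfrak{g}=\mathfrak{u}^-\oplus\mathfrak{m}\oplus\mathfrak{u}$ as $M$-modules and $\operatorname{Ad}(m)$ acts with determinant $1$ on $\mathfrak{g}$ and on $\mathfrak{m}$ (the latter because $\mathbf{M}$ is reductive, or more simply because $\mathfrak{m}$ carries a perfect $M$-invariant pairing coming from $\mathfrak{u}\times\mathfrak{u}^-$, so the relevant determinants cancel).

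The key step is to feed an element $m\in M\cap H$ into these formulas. Since $\mathbf{P}$ is $\sigma$-split we have $\sigma(\mathbf{P})=\mathbf{P}^-$ and hence $\sigma(\mathbf{U})=\mathbf{U}^-$, so $d\sigma$ carries $\mathfrak{u}$ isomorphically onto $\mathfrak{u}^-$. For $m\in M\cap H$ we have $\sigma(m)=m$, so $d\sigma$ intertwines $\operatorname{Ad}(m)\mid\mathfrak{u}$ with $\operatorname{Ad}(\sigma(m))\mid\mathfrak{u}^-=\operatorname{Ad}(m)\mid\mathfrak{u}^-$. Therefore
\[
\delta_P(m)=|\det(\operatorname{Ad}(m)\mid\mathfrak{u})|_F
=|\det(\operatorname{Ad}(m)\mid\mathfrak{u}^-)|_F
=\delta_{P^-}(m)=\delta_P(m)^{-1},
\]
which forces $\delta_P(m)^2=1$, and since $\delta_P$ takes values in $\mathbb{R}^\times_{>0}$ we conclude $\delta_P(m)=1$ for all $m\in M\cap H$, as desired.

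I expect the only delicate point to be the identity $\delta_{P^-}=\delta_P^{-1}$ on $M$; alternatively one can avoid it entirely by working with the torus $\mathbf{A}=\mathbf{A}_I$ (the split component of $\mathbf{M}$), on which $\delta_P$ restricts to $a\mapsto|\prod_{\alpha}\alpha(a)^{m_\alpha}|_F$ where the product runs over roots $\alpha$ of $\mathbf{A}$ in $\mathfrak{u}$ with multiplicities $m_\alpha$. Since $\sigma$ sends the roots in $\mathfrak{u}$ to the roots in $\mathfrak{u}^-$ (i.e.\ negates them on the relevant quotient), and every $m\in M\cap H$ can be analyzed via conjugacy of maximal split tori, this gives the same cancellation. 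Either way, the argument is short: the structure of the proof is (i) express $\delta_P$ as a determinant on $\mathfrak{u}$, (ii) use $\sigma(\mathbf{U})=\mathbf{U}^-$ and $\sigma$-fixedness of $m$ to identify this determinant with the one on $\mathfrak{u}^-$, and (iii) observe that the product of the two determinants is $1$, so each equals $1$.
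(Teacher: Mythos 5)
Your argument is correct and uses essentially the same cancellation as the paper: since $\sigma$ interchanges $\mathfrak{u}$ and $\mathfrak{u}^-$ and fixes $m\in M\cap H$, one gets $\delta_P(m)=\delta_{P^-}(m)=\delta_P(m)^{-1}$, hence $\delta_P(m)=1$. The paper reaches the same conclusion by first reducing to $A\cap H$ via Lemma \ref{1.10} and then applying the explicit formula $\delta_P(a)=\prod_{\alpha}|a^\alpha|_F^{m_\alpha}$ together with $m_\alpha=m_{\sigma(\alpha)}$ --- which is precisely the torus-based alternative you sketch in your final paragraph --- rather than working with $\det(\operatorname{Ad}(m)\mid\mathfrak{u})$ on all of $M\cap H$ as your main argument does.
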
 
\begin{proof} 
Let $\mathbf{A}$ be 
the $F$-split 
component of $\mathbf{M}$. 
Since $\delta_P$ is positive real valued, 
it is determined by the values on 
$A$ according to Lemma \ref{1.10} (1). 
It is  enough to show that
$\delta_P$ is trivial on 
$A\cap H$ by (2) of Lemma \ref{1.10}. 
We may suppose that $P=P_I$ for a $\sigma$-split subset $I\subset\Delta$. 
Then we have 
$$
\delta_P(a)=\prod_{\alpha\in\Phi^+\setminus\Phi_I}
\left|a^{\alpha}\right|_F^{m_{\alpha}}
\quad(a\in A)
$$
where $m_{\alpha}$ denotes the dimension of 
the root space attached to $\alpha\in\Phi$. 
Note that $m_{\alpha}=m_{\sigma(\alpha)}$ since $\sigma$ 
maps the root space attached to $\alpha$ isomorphically 
onto the one attached to $\sigma(\alpha)$. 
If further $a\in A\cap H$, we have 
\begin{eqnarray*} 
\delta^2(a)&=&
\delta(a)\delta(\sigma(a))
=\prod_{\alpha\in\Phi^+\setminus\Phi_I}
\left|a^{\alpha}\right|_F^{m_{\alpha}}
\prod_{\alpha\in\Phi^+\setminus\Phi_I}
\left|a^{\sigma(\alpha)}\right|_F^{m_{\alpha}}\\
&=&\prod_{\alpha\in\Phi^+\setminus\Phi_I}
\left|a^{\alpha}\right|_F^{m_{\alpha}}
\prod_{\alpha\in\Phi^+\setminus\Phi_I}
\left|a^{\sigma(\alpha)}\right|_F^{m_{\sigma(\alpha)}}
\\
&=&\prod_{\alpha\in\Phi^+\setminus\Phi_I}
\left|a^{\alpha}\right|_F^{m_{\alpha}}
\prod_{\beta\in\Phi^+\setminus\Phi_I}
\left|a^{-\beta}\right|_F^{m_{\beta}}=1
\end{eqnarray*}
and the claim follows. 
\end{proof} 

\section{The analogue of Cartan decomposition} \label{S3}

We shall recall from \cite{BO} and \cite{DS} a certain 
decomposition 
theorem related to 
$p$-adic symmetric spaces and 
give a variant of it for our later use. 
It will play an important role in the study of the support of 
$H$-matrix coefficients. 

\subsection{} \label{3.1} 
Let us fix a triple 
$(\mathbf{S}_0, \mathbf{A}_{\emptyset}, 
\Delta)$ as in \ref{2.1}. 
For simplicity, put 
$S_0^+=S_{\Delta_{\sigma}}^+(1)$ so that 
$$
S_0^+
=\{\,s\in\mathbf{S}_0(F)\,\bigm|\,
|s^{\alpha}|_F\geqq 1\,
(\alpha\in\Delta\setminus\Delta_{\sigma})\}. 
$$
Let $\{\mathbf{S}_0^{(j)}\}_{j\in J}$ be a set of representatives 
for $H$-conjugacy classes of 
maximal $(\sigma,F)$-split tori of $\mathbf{G}$. In this case, 
the index set $J$ is finite by \cite[2.12]{HH}. 
For each $j\in J$, there exists an element $y_j\in 
\left(\mathbf{M}_{\emptyset}\mathbf{H}\right)(F)$ 
such that $
y_j^{-1}\mathbf{S}_0y_j=
\mathbf{S}_0^{(j)}$ by \cite[10.3]{HW}. 
Set 
$$
W_G(\mathbf{S}_0^{(j)})=
N_G(\mathbf{S}_0^{(j)})/Z_G(\mathbf{S}_0^{(j)})
$$
and 
$$
W_H(\mathbf{S}_0^{(j)})=
N_H(\mathbf{S}_0^{(j)})/Z_H(\mathbf{S}_0^{(j)}). 
$$ 
Naturally $W_H(\mathbf{S}_0^{(j)})$ is regarded as 
a subgroup of 
the finite (Weyl) group $W_G(\mathbf{S}_0^{(j)})$. 
Let ${\mathcal N}_j\subset N_G(\mathbf{S}_0^{(j)})$ be 
a complete set of
representatives  of 
$W_G(\mathbf{S}_0^{(j)})/W_H(\mathbf{S}_0^{(j)})$ 
in $N_G(\mathbf{S}_0^{(j)})$. 

\begin{Theorem}[{\cite[Theorem 3.10]{DS}}] \label{3.2}
There exists a compact subset 
$\Omega$ of $G$ such that 
$$ 
G=\bigcup_{j\in J}\bigcup_{n\in{\mathcal N}_j} 
\Omega S_0^+
y_j n H. 
$$
\end{Theorem}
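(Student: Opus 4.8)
The plan is to reduce the statement to the known Cartan-type decomposition of Benoist--Oh / Delorme--S\'echerre applied not to $\mathbf{G}$ directly but in a form adapted to the fixed triple $(\mathbf{S}_0,\mathbf{A}_{\emptyset},\Delta)$. The cited references give a decomposition of $G$ of the shape $G=\Omega'\,A\,\mathcal{W}\,H$ where $A$ ranges over a suitable cone in a maximal $(\sigma,F)$-split torus, $\mathcal{W}$ is a finite set, and $\Omega'$ is compact; the task here is to bookkeep the finitely many $H$-conjugacy classes of maximal $(\sigma,F)$-split tori and the corresponding Weyl coset representatives so that everything is expressed through the single distinguished torus $\mathbf{S}_0$ (via the elements $y_j$) and through $S_0^+$. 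So first I would invoke the original decomposition theorem in the form stated in \cite{DS} (or \cite{BO}), which already produces a compact set $\Omega_0$ and the double-coset description indexed by the $\mathbf{S}_0^{(j)}$ and the sets $\mathcal{N}_j$.

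Next I would transport each piece back to $\mathbf{S}_0$. For a fixed $j$, conjugation by $y_j$ carries $\mathbf{S}_0^{(j)}=y_j^{-1}\mathbf{S}_0 y_j$ to $\mathbf{S}_0$, and carries the relevant cone inside $S_0^{(j)}$ to a cone inside $S_0$. The point is that, up to multiplying on the left by a compact set and on the right by elements of $N_G(\mathbf{S}_0)$, this cone can be taken to be exactly $S_0^+=S^+_{\Delta_\sigma}(1)$: indeed the $W_G(\mathbf{S}_0)$-translates of the ``dominant'' chamber $S_0^+$ cover all of $S_0$, and the finite-index subgroup $W_H(\mathbf{S}_0^{(j)})$ versus $W_G(\mathbf{S}_0^{(j)})$ is precisely what the coset representatives $\mathcal{N}_j$ record. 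Pulling the Weyl-group action through $y_j$ and absorbing the compact Weyl representatives into the compact part, one rewrites $S_0^{(j),+}\mathcal{N}_j H$ as contained in $\Omega_j\, S_0^+\, y_j\,\mathcal{N}_j\, H$ for a compact $\Omega_j$. Taking $\Omega=\bigcup_{j}\Omega_j$ (a finite union of compacts, hence compact) gives the asserted covering $G=\bigcup_{j\in J}\bigcup_{n\in\mathcal{N}_j}\Omega S_0^+ y_j n H$.

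The main obstacle I anticipate is purely organizational rather than deep: carefully matching the normalizations in the source theorem (which chamber/cone they use, whether representatives sit in $N_G$ or already in $H$, and which maximal $(\sigma,F)$-split tori are singled out) with the notation fixed in \ref{2.1}, \ref{2.4}, \ref{2.7} and \ref{3.1}, and verifying that each reindexing step only costs a left multiplication by a compact set. One has to check that the passage $S_0^{(j),+}\mapsto S_0^+$ via $y_j$ does not leak noncompact error terms: this uses that $y_j\in(\mathbf{M}_{\emptyset}\mathbf{H})(F)$ and that the difference between the cone coming from \cite{DS} and $S_0^+$ is covered by finitely many $W_G(\mathbf{S}_0)$-translates, each of which, when pushed to the right past $y_j$, lands in $N_G(\mathbf{S}_0^{(j)})$ and can be folded into $\mathcal{N}_j$ modulo $W_H(\mathbf{S}_0^{(j)})$ (and the $Z_G(\mathbf{S}_0^{(j)})=M_0^{(j)}$ part is itself compact modulo the torus, contributing to $\Omega$). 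Once these identifications are pinned down, the finiteness of $J$ (from \cite[2.12]{HH}) and of each $\mathcal{N}_j$ makes the assembly of the single compact set $\Omega$ immediate.
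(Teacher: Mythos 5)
This statement is not proved in the paper: Theorem \ref{3.2} is imported verbatim from the cited reference, as the bracketed attribution \cite[Theorem 3.10]{DS} indicates (with the remark immediately after pointing out that \cite[Theorem 1.1]{BO} gives an essentially equivalent result). So there is nothing in the paper for your reconstruction to be measured against; you are sketching a proof of the Delorme--S\'echerre theorem itself, which lies outside the scope of this article. What the paper does do is the small follow-up step in \ref{3.3}--\ref{3.4}, where the representatives $y_jn$ are replaced by $ny_j$ with $n\in N_G(\mathbf{S}_0)$ and shown to lie in $(\mathbf{M}_0\mathbf{H})(F)$, yielding Corollary \ref{3.4}; that manipulation is elementary and has nothing to play against in your proposal.

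One piece of your sketch is, however, factually wrong and worth flagging because it reflects a misconception that would cause trouble elsewhere. You claim that ``the $Z_G(\mathbf{S}_0^{(j)})=M_0^{(j)}$ part is itself compact modulo the torus, contributing to $\Omega$.'' This is false: $\mathbf{M}_0=Z_\mathbf{G}(\mathbf{S}_0)$ is a full Levi subgroup and $\mathbf{S}_0$ is only its maximal $(\sigma,F)$-split central torus, not its full $F$-split center, let alone its full center; $M_0/S_0$ is not compact. (Already in the group case, $M_0=M_{\emptyset,1}\times M_{\emptyset,1}$ and $S_0=\{(a,a^{-1}):a\in A_{\emptyset,1}\}$, so $M_0/S_0$ contains a copy of the noncompact $A_{\emptyset,1}$.) Fortunately your argument does not actually need that claim: the sets $\mathcal{N}_j$ are chosen once and for all as finite sets of representatives in $N_G(\mathbf{S}_0^{(j)})$, so there is no $Z_G(\mathbf{S}_0^{(j)})$-ambiguity left to absorb into a compact factor. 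You should simply delete that parenthetical rather than try to repair it.
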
 

\noindent 
An essentially equivalent assertion is given also in 
\cite[Theorem 1.1]{BO}. 

\subsection{} \label{3.3}
Now for each $j\in J$, we have 
$$
N_G(\mathbf{S}_0^{(j)})=N_G(y_j^{-1}\mathbf{S}_0y_j)
=y_j^{-1}N_G(\mathbf{S}_0)y_j. 
$$
Put 
$$
{\mathcal N}'_j=\{y_jny_j^{-1}\,|\, n\in{\mathcal N}_j\}. 
$$
We have ${\mathcal N}'_j\subset N_G(\mathbf{S}_0)$ for all 
$j\in J$. The above decomposition is written as 
$$
G=\bigcup_{j\in J}\bigcup_{n\in{\mathcal N}'_j} 
\Omega S_0^+
ny_j H.
$$ 
Furthermore, we have 
$n\sigma(n)^{-1}\in\mathbf{M}_0(F)$ 
for any $n\in N_G(\mathbf{S}_0)$ since 
$n\sigma(n)^{-1}$ centralizes $\mathbf{S}_0$. 
Consequently, for any 
$n\in N_G(\mathbf{S}_0)$ and 
$y\in\left(\mathbf{M}_0\mathbf{H}\right)(F)$, 
we have 
\begin{eqnarray*} 
ny\sigma(ny)^{-1}&=&n(y\sigma(y)^{-1})\sigma(n)^{-1}\\ 
&=&n(y\sigma(y)^{-1})n^{-1}\cdot n\sigma(n)^{-1}\in M_0. 
\end{eqnarray*} 
This means that 
$ny\in\left(\mathbf{M}_0\mathbf{H}\right)(F)$. 
As a result, we have a variant of 
Theorem \ref{3.2} in the following form.  

\begin{Corollary} \label{3.4}
There exists a compact subset $\Omega$ of $G$ and 
a finite subset 
$\Gamma$ of $\left(\mathbf{M}_0\mathbf{H}\right)(F)$ 
such that 
$$
G=\Omega S_0^+\Gamma H. 
$$
\end{Corollary}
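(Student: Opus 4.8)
The plan is to deduce the assertion directly from Theorem \ref{3.2} by collecting the finitely many Weyl-coset representatives $n\in\mathcal{N}_j$ together with the conjugating elements $y_j$ into a single finite subset $\Gamma\subset(\mathbf{M}_0\mathbf{H})(F)$, using the coset identities already recorded in \ref{3.3}.

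First I would rewrite each double coset occurring in Theorem \ref{3.2}. For $j\in J$ and $n\in\mathcal{N}_j$ set $n'=y_jny_j^{-1}$; since $y_j^{-1}\mathbf{S}_0y_j=\mathbf{S}_0^{(j)}$, conjugation by $y_j$ carries $N_G(\mathbf{S}_0^{(j)})$ onto $N_G(\mathbf{S}_0)$, so $n'\in N_G(\mathbf{S}_0)$; moreover $y_jn=n'y_j$, whence $\Omega S_0^+ y_j n H=\Omega S_0^+\,n'y_j\,H$. Thus Theorem \ref{3.2} takes the form
$$
G=\bigcup_{j\in J}\bigcup_{n\in\mathcal{N}_j}\Omega\, S_0^+\, (y_jny_j^{-1})\, y_j\, H.
$$

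Next I would check that each element $\gamma:=n'y_j$ appearing here lies in $(\mathbf{M}_0\mathbf{H})(F)$. Since $\mathbf{M}_{\emptyset}\subset\mathbf{M}_0$ we have $y_j\in(\mathbf{M}_0\mathbf{H})(F)$, so the computation of \ref{3.3}, applied with $n'$ in the role of $n$ and $y_j$ in the role of $y$, gives
$$
\gamma\sigma(\gamma)^{-1}=n'\bigl(y_j\sigma(y_j)^{-1}\bigr)(n')^{-1}\cdot n'\sigma(n')^{-1}\in M_0,
$$
and hence $\gamma\in(\mathbf{M}_0\mathbf{H})(F)$. Then I would set $\Gamma=\{\,(y_jny_j^{-1})y_j\mid j\in J,\ n\in\mathcal{N}_j\,\}$: this set is finite, as $J$ is finite by \cite[2.12]{HH} and each $\mathcal{N}_j$ represents the finite quotient $W_G(\mathbf{S}_0^{(j)})/W_H(\mathbf{S}_0^{(j)})$; it is contained in $(\mathbf{M}_0\mathbf{H})(F)$ by the previous step; and the displayed decomposition is precisely $G=\Omega S_0^+\Gamma H$.

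Since all the substance is already contained in Theorem \ref{3.2} (the Benoist--Oh / Delorme--S\'echerre Cartan-type decomposition) and in the identities of \ref{3.3}, I do not expect a genuine obstacle here. The one point that is not purely formal is the implication $\gamma\sigma(\gamma)^{-1}\in M_0\Rightarrow\gamma\in(\mathbf{M}_0\mathbf{H})(F)$ at the level of $F$-points: this is the standard rationality fact about $\sigma$-stable Levi subgroups that is already invoked in the proof of Lemma \ref{2.5} (and which ultimately rests on \cite{HW}); everything else is routine bookkeeping with double cosets.
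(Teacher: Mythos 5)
Your argument is essentially identical to the paper's own derivation in \ref{3.3}: you conjugate each $n\in\mathcal N_j$ to $n'=y_jny_j^{-1}\in N_G(\mathbf S_0)$, rewrite the double cosets from Theorem \ref{3.2} as $\Omega S_0^+ n' y_j H$, and verify $n'y_j\in(\mathbf M_0\mathbf H)(F)$ via the same computation $\gamma\sigma(\gamma)^{-1}\in M_0$. The paper packages the conjugates as $\mathcal N'_j$ but otherwise follows exactly the same steps, so this is the intended proof.
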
 

\section{Preliminaries on open compact subgroups } \label{S4}

For a later use on the study of 
invariant linear forms on Jacquet modules, 
we shall construct a particular family of 
open compact subgroups 
adapted to the involution. 

\subsection{} \label{4.1}
In this section, all parabolic $F$-subgroups 
are standard ones 
with respect to some fixed data 
$(\mathbf{S}_0, 
\mathbf{A}_{\emptyset}, \Delta)$ as in 
\ref{2.3}, but the subscripts $I$ ($\subset\Delta$) 
will be 
omitted. For a parabolic 
$F$-subgroup $\mathbf{P}=\mathbf{M}\ltimes
\mathbf{U}$ of $\mathbf{G}$ 
and an open compact subgroup
$K$ of $G$, we set 
$$
U_K=U\cap K,\quad M_K=M\cap K,\quad U^-_K=U^-\cap K.
$$ 
Note that 
$\sigma(M_K)=M_K$ and 
$\sigma(U_K)=U^-_K$ if $K$ is 
$\sigma$-stable and $\mathbf{P}$ is $\sigma$-split. 

\subsection{} \label{4.2}
For each choice of a maximal $F$-split torus 
$\mathbf{A}_{\emptyset}$ and a basis $\Delta$ of 
the root system of $(\mathbf{G},
\mathbf{A}_{\emptyset})$, there is a decreasing 
sequence $\{K'_n\}_{n\geqq 0}$ of open compact 
subgroups of $G$ satisfying 
the following properties as in \cite[1.4.4]{C}:  
\begin{enumerate} 
\item\label{4.2(1)}
It gives a 
fundamental system of open neighborhoods of the 
identity $e$ in $G$. 
\item\label{4.2(2)}
For each $n\geqq 1$, the subgroup 
$K_n'$ is normal in $K_0'$ and 
the quotient 
$K_n'/K_{n+1}'$ is a finite abelian $p$-group 
where $p$ denotes the residual characteristic of $F$. 
\item \label{4.2(3)}
For each $K'=K'_n$ $(n\geqq 1)$ and each standard 
parabolic $F$-subgroup $\mathbf{P}=
\mathbf{M}\ltimes\mathbf{U}$ of 
$\mathbf{G}$ (corresponding 
to $(\mathbf{A}_{\emptyset},\Delta)$) 
with 
the $F$-split component $\mathbf{A}$, 
the product map 
$$
U^-_{K'}\times 
M_{K'}\times U_{K'}\to 
K'
$$ 
is bijective and 
$$
a\cdot U_{K'}\cdot a^{-1}\subset U_{K'}, \quad 
a^{-1}\cdot U^-_{K'}\cdot a\subset U^-_{K'}
$$
for all $a\in A^-(1)$. 
\item \label{4.2(4)}
For each standard parabolic 
subgroup $\mathbf{P}=
\mathbf{M}\ltimes\mathbf{U}$ of $G$, the family 
$\{M\cap K_n'\}_{n\geqq 0}$ enjoys the same 
properties as \eqref{4.2(1)}--\eqref{4.2(3)} above for the group $M$.   
\end{enumerate}
Here the latter half of \eqref{4.2(2)} is not apparent in 
\cite[1.4.4]{C}. However, in the $F$-split case, 
the argument in \cite[2.2.11]{G} 
shows that each quotient 
$K'_n/K'_{n+1}$ is isomorphic to 
the additive 
group of a Lie algebra over the 
residue field of $F$. 
The general case is reduced to the 
$F$-split case as in \cite[1.4.4]{C}. 

\begin{Lemma} \label{4.3}
Fix a data 
$(\mathbf{S}_0, 
\mathbf{A}_{\emptyset}, \Delta)$ as in 
$\ref{2.1}$ and let $\{K_n'\}_{n\geqq
0}$ be as above. 
Put 
$K_n=K_n'\cap\sigma(K_n')$ for each $n$. 
Then the family 
$\{K_n\}_{n\geqq 0}$ is a decreasing sequence of 
$\sigma$-stable open compact subgroups of 
$G$ satisfying the following properties: 
\begin{enumerate}
\item \label{4.3(1)}
It gives a 
fundamental system of open neighborhoods of the 
identity $e$ in $G$. 
\item \label{4.3(2)}
For each $n\geqq 1$, the subgroup 
$K_n$ is normal in $K_0$ and 
the quotient 
$K_n/K_{n+1}$ is a finite abelian $p$-group. 
\item \label{4.3(3)}
For each $K=K_n$ $(n\geqq 1)$ and each 
$\sigma$-split parabolic subgroup 
$\mathbf{P}=
\mathbf{M}\ltimes\mathbf{U}$ of $\mathbf{G}$ 
(standard with respect to 
$(\mathbf{S}_0, 
\mathbf{A}_{\emptyset}, \Delta)$) 
with 
the $(\sigma,F)$-split component $\mathbf{S}$, 
the product map 
$$
U^-_{K}\times 
M_{K}\times U_{K}\to 
K
$$ 
is bijective and 
$$
s\cdot U_{K}\cdot s^{-1}\subset U_{K},\quad
s^{-1}\cdot U^-_{K}\cdot s\subset U^-_{K}
$$
for all $s\in S^-(1)$. 
\item \label{4.3(4)}
For each $\sigma$-split parabolic 
subgroup $\mathbf{P}=
\mathbf{M}\ltimes\mathbf{U}$ 
of $\mathbf{G}$ standard with 
respect to 
$(\mathbf{S}_0, 
\mathbf{A}_{\emptyset}, \Delta)$, 
the family 
$\{M\cap K_n\}_{n\geqq 0}$ enjoys the same 
properties as \eqref{4.3(1)}--\eqref{4.3(3)} above for the group $M$.   
\end{enumerate}
\end{Lemma}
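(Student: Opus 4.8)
The plan is to derive the $\sigma$-stable family $\{K_n\}$ from the given non-$\sigma$-stable family $\{K'_n\}$ by intersecting with the $\sigma$-image, and then check the four properties one by one, reducing each to the corresponding property of $\{K'_n\}$ recorded in \ref{4.2}. First I would note that each $K_n=K'_n\cap\sigma(K'_n)$ is visibly a $\sigma$-stable open compact subgroup and that the sequence is decreasing; the fact that $\sigma$ is an algebraic involution defined over $F$ guarantees $\sigma(K'_n)$ is again an open compact subgroup, so the intersection makes sense. Property \eqref{4.3(1)}: since $\{K'_n\}$ is a fundamental system of neighborhoods of $e$ and $\sigma$ is a homeomorphism fixing $e$, the family $\{\sigma(K'_n)\}$ is also one; intersecting two fundamental systems of neighborhoods of $e$ (each decreasing) still yields a fundamental system, because any neighborhood of $e$ contains some $K'_m$, and then $K_m=K'_m\cap\sigma(K'_m)\subseteq K'_m$ lies inside it. Property \eqref{4.3(2)}: $K_n=K'_n\cap\sigma(K'_n)$ is normal in $K_0=K'_0\cap\sigma(K'_0)$ because $K'_n\trianglelefteq K'_0$ and $\sigma(K'_n)\trianglelefteq\sigma(K'_0)$, and the intersection of two subgroups each normalized by $K_0$ is normalized by $K_0$; the quotient $K_n/K_{n+1}$ embeds into $K'_n/K'_{n+1}\times\sigma(K'_n)/\sigma(K'_{n+1})$, a finite abelian $p$-group, hence is itself a finite abelian $p$-group.

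The substantive content is in \eqref{4.3(3)}. Fix a $\sigma$-split standard $\mathbf{P}=\mathbf{M}\ltimes\mathbf{U}$ with $(\sigma,F)$-split component $\mathbf{S}$, and set $K=K_n$. Since $\mathbf{P}$ is standard with respect to $(\mathbf{A}_\emptyset,\Delta)$, property \eqref{4.2(3)} gives the Iwahori-type factorization $U^-_{K'}\times M_{K'}\times U_{K'}\xrightarrow{\sim}K'$ for $K'=K'_n$. Because $\mathbf{P}$ is $\sigma$-split we have $\sigma(\mathbf{U})=\mathbf{U}^-$, $\sigma(\mathbf{U}^-)=\mathbf{U}$, $\sigma(\mathbf{M})=\mathbf{M}$, and $\mathbf{S}\subset\mathbf{A}^-(1)$-translations are governed by \eqref{4.2(3)} (note $S^-(1)\subset A^-(1)$). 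Applying $\sigma$ to the factorization for $K'$ gives a factorization $U^-_{\sigma(K')}\times M_{\sigma(K')}\times U_{\sigma(K')}\xrightarrow{\sim}\sigma(K')$ (here I use $\sigma(U\cap K')=U^-\cap\sigma(K')$, etc., which is immediate since $\sigma(\mathbf{U})=\mathbf{U}^-$). The key point is that these two factorizations are ``compatible'': I would intersect them to get $U^-_K\times M_K\times U_K\to K$. Surjectivity and injectivity of this map follow from the uniqueness of the factorization in the ambient group $G$ (or in $K'_0$): any $k\in K$ lies in both $K'$ and $\sigma(K')$, so its unique factors $u^-\in U^-$, $m\in M$, $u\in U$ lie in both $U^-_{K'},M_{K'},U_{K'}$ and $U^-_{\sigma(K')},M_{\sigma(K')},U_{\sigma(K')}$ — but a subtlety is that knowing $k\in\sigma(K')$ gives a factorization with factors in $\sigma(U^-\cap K')=U\cap\sigma(K')$ etc., and I must match this against the $K'$-factorization. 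Here I use that the factorization in $G$ (into $\mathbf{U}^-\mathbf{M}\mathbf{U}$) is unique once it exists, so the factors of $k$ coming from either side must coincide, forcing $u^-\in U^-_{K'}\cap U^-_{\sigma(K')}=U^-\cap K$, and similarly for $m,u$; this gives injectivity and well-definedness, and surjectivity is the reverse. The conjugation inclusions $sU_Ks^{-1}\subset U_K$ and $s^{-1}U^-_Ks\subset U^-_K$ for $s\in S^-(1)$: from \eqref{4.2(3)} applied to $K'$ we get $sU_{K'}s^{-1}\subset U_{K'}$ and $s^{-1}U^-_{K'}s\subset U^-_{K'}$ since $S^-(1)\subset A^-(1)$; applying $\sigma$ (and $\sigma(s)=s^{-1}$, $\sigma(\mathbf{U})=\mathbf{U}^-$) converts the first into $s^{-1}U^-_{\sigma(K')}s\subset U^-_{\sigma(K')}$ and the second into $sU_{\sigma(K')}s^{-1}\subset U_{\sigma(K')}$; intersecting the two sets of inclusions gives the claim for $K=K'\cap\sigma(K')$.

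Property \eqref{4.3(4)} is handled by the same circle of ideas applied one level down. For a $\sigma$-split standard $\mathbf{P}=\mathbf{M}\ltimes\mathbf{U}$, the involution $\sigma$ restricts to an $F$-involution on $\mathbf{M}$, and $(\mathbf{A}_\emptyset,\Delta)$ induces root data for $\mathbf{M}$; property \eqref{4.2(4)} says $\{M\cap K'_n\}$ has the properties \eqref{4.2(1)}--\eqref{4.2(3)} for $M$. Since $\sigma(M)=M$ and $\sigma(M\cap K'_n)=M\cap\sigma(K'_n)$, we have $M\cap K_n=(M\cap K'_n)\cap\sigma(M\cap K'_n)$, so $\{M\cap K_n\}$ is exactly the ``$\sigma$-stabilization'' of $\{M\cap K'_n\}$ inside $M$; the arguments for \eqref{4.3(1)}--\eqref{4.3(3)} then apply verbatim with $G$ replaced by $M$, using that the $\sigma$-split standard parabolics of $\mathbf{M}$ are the intersections with $\mathbf{M}$ of those of $\mathbf{G}$ contained in $\mathbf{P}$. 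I expect the main obstacle to be the bookkeeping in \eqref{4.3(3)}: making the two triangular factorizations (the $K'$-one and its $\sigma$-twist) genuinely compatible, i.e. verifying that intersecting them componentwise recovers $U^-_K\times M_K\times U_K\to K$ rather than something larger — this rests on the uniqueness of the open cell factorization in $G$, which must be invoked carefully, together with the identities $\sigma(U\cap K')=U^-\cap\sigma(K')$ that use $\sigma$-splitness of $\mathbf{P}$.
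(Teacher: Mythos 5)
Your proposal is correct and follows essentially the same route as the paper's proof: (1) is immediate, (2) via commutators/embedding into a product, and (3) by comparing the $K'$- and $\sigma(K')$-Iwahori factorizations of an element of $K$ and invoking uniqueness of the $U^-MU$ decomposition, which is precisely what the authors do by decomposing $k$ and $\sigma(k)^{-1}$ in $K'$. One small precision: applying $\sigma$ to $K'=U^-_{K'}M_{K'}U_{K'}$ produces the $U\times M\times U^-$ ordering for $\sigma(K')$, so to get the $U^-\times M\times U$ factorization you need an inversion step (equivalently, decompose $\sigma(k)^{-1}$ rather than $\sigma(k)$) — you flag the subtlety but the mechanism you cite doesn't quite deliver it.
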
 
\begin{proof} 
These are derived directly from the corresponding 
properties of $K_n'$ in \ref{4.2}. 
First, note that $k$ belongs to 
$K_n$ if and only if both $k$ and 
$\sigma(k)$ belong to $K_n'$. Now, 
\eqref{4.3(1)} is obvious. For \eqref{4.3(2)}, take any 
$k_1$, $k_2\in K_n$ and consider their 
commutator. By \ref{4.2} \eqref{4.2(2)}, we
have  
$k_1k_2k_1^{-1}k_2^{-1}\in K_{n+1}'$ and 
$$
\sigma(k_1)\sigma(k_2)
\sigma(k_1)^{-1}\sigma(k_2)^{-1}
=\sigma(k_1k_2k_1^{-1}k_2^{-1})\in K_{n+1}',
$$ 
hence $k_1k_2k_1^{-1}k_2^{-1}\in K_{n+1}$. 
For \eqref{4.3(3)}, it is sufficient to show 
the  surjectivity of the product map. 
Given $k\in K=K_n$, 
decompose $k$ and $\sigma(k)^{-1}\in K'=K_n'$ 
as 
$$
k=u_1^-m_1u_1,\quad 
\sigma(k)^{-1}=u_2^-m_2u_2
$$
where $u_i^-\in U_{K'}^-$, 
$m_i\in M_{K'}$, $u_i\in U_{K'}$ ($i=1,2$) 
by \ref{4.2} \eqref{4.2(3)}. 
Then we have 
$$
\sigma(u_2)u_1^-=\sigma(m_2)^{-1}
\sigma(u_2^-)^{-1}u_1^{-1}m_1^{-1}\in U^-\cap P=\{e\},
$$
which shows that 
$u_1^-=\sigma(u_2)^{-1}$ and in turn, 
$m_1=\sigma(m_2)^{-1}$, $u_1=\sigma(u_2^-)^{-1}$. 
Now we have 
$$
u_1^-=\sigma(u_2)^{-1}\in\left(
U^-\cap K'\right)\cap 
\left(\sigma(U)\cap\sigma(K')
\right)
=U^-\cap K
$$
and similarly 
$m_1\in M\cap K$, $u_1\in U\cap K$. 
Finally, \eqref{4.3(4)} follows once 
\eqref{4.3(1)}--\eqref{4.3(3)} are verified.  
\end{proof}

We say that a family $\{K_n\}_{n\geqq 0}$ of 
$\sigma$-stable open compact subgroups of $G$ 
is {\it adapted to} 
$(\mathbf{S}_0, 
\mathbf{A}_{\emptyset}, \Delta)$ if 
it satisfies the above properties \eqref{4.3(1)}--\eqref{4.3(4)}. 
By the bijectivity of the product map 
of \eqref{4.3(3)}, any element $k\in K=K_n$ can be 
written uniquely as 
$$
k=u^-\cdot m\cdot u^+,\quad u^-\in U_K^-,\,
m\in M_K,\,u^+\in U_K.
$$    
Such an expression is called {\it the Iwahori 
factorization with respect to $P$}. 
 
\subsection{} \label{4.4}
Let $p$ be an odd prime and 
$C$ a finite abelian $p$-group. 
The homomorphism 
$a\mapsto a^2$ of $C$ into itself is bijective. 
The inverse map is denoted by $a\mapsto a^{1/2}$. 
Let $\sigma$ be an involution 
on $C$. If $a\in C$ satisfies the condition 
$\sigma(a)^{-1}=a$, then 
$b=a^{1/2}$ is an element of $C$ such that 
$a=b\sigma(b)^{-1}$. 

\begin{Lemma} \label{4.5}
Let $p$ be an odd prime, 
$K$ a totally disconnected 
compact group and 
$\sigma$ an involution on $K$. 
Suppose that $K$ has a decreasing 
sequence 
$$
K=K_1\supset K_2\supset K_3\supset \cdots 
\supset K_n\supset \cdots
$$
of $\sigma$-stable open normal 
subgroups such that 

$(\rm i)$ 
$K_n/K_{n+1}$ is a finite 
abelian $p$-group for each $n\geqq 1$, 

\noindent and 

$(\rm {ii})$ $\bigcap_{n\geqq 1} K_n=\{e\}$. 

\noindent Then, 
for any $k\in K$ satisfying the condition 
$\sigma(k)^{-1}=k$, there exists an element 
$k'\in K$ such that $k=k'\sigma(k')^{-1}$. 
\end{Lemma}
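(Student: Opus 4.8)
The plan is to solve $k=k'\sigma(k')^{-1}$ by successive approximation along the filtration $\{K_n\}$, correcting the error one layer $K_n/K_{n+1}$ at a time by means of the halving observation in \ref{4.4}. I would construct inductively a sequence $k'_1,k'_2,\dots$ in $K$ with $k'_1=e$, $k'_{n+1}\in k'_nK_n$, and
$$
k\,\bigl(k'_n\sigma(k'_n)^{-1}\bigr)^{-1}\in K_n\qquad(n\geqq 1);
$$
for $n=1$ this is trivial since $K_1=K$. Granting the construction, the relations $k'_{n+1}\in k'_nK_n$ and $K_j\subseteq K_n$ ($j\geqq n$) give $(k'_n)^{-1}k'_m\in K_n$ for all $m\geqq n$, so $(k'_n)$ converges (by compactness of $K$, each $K_n$ being closed) to some $k'$ with $(k'_n)^{-1}k'\in K_n$, and then $\sigma(k'_n)^{-1}\sigma(k')=\sigma\bigl((k'_n)^{-1}k'\bigr)\in\sigma(K_n)=K_n$, for every $n$. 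A short computation with these inclusions and the normality of $K_n$ in $K$ shows that $k\bigl(k'\sigma(k')^{-1}\bigr)^{-1}$ lies in $K_N$ for every $N$, hence in $\bigcap_nK_n=\{e\}$; this yields $k=k'\sigma(k')^{-1}$, as desired.

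It remains to describe the inductive step. Assume $k'_n$ has been found and put $\kappa=(k'_n)^{-1}\,k\,\sigma(k'_n)$. This is the right object for two reasons. First, $k\sigma(k'_n)(k'_n)^{-1}=k\bigl(k'_n\sigma(k'_n)^{-1}\bigr)^{-1}\in K_n$, and conjugating by $(k'_n)^{-1}$ (using normality of $K_n$) gives $\kappa\in K_n$. Second, from $\sigma(k)=k^{-1}$ and $\sigma^2=\mathrm{id}$ one computes $\sigma(\kappa)=\sigma(k'_n)^{-1}k^{-1}k'_n$, so $\sigma(\kappa)^{-1}=\kappa$; that is, $\kappa$ again satisfies the defining relation. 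Now pass to $\overline K=K_n/K_{n+1}$, a finite abelian $p$-group carrying the involution $\bar\sigma$ induced by $\sigma$ (well defined because $K_{n+1}$ is $\sigma$-stable). The image $\bar\kappa$ of $\kappa$ satisfies $\bar\sigma(\bar\kappa)^{-1}=\bar\kappa$, so by \ref{4.4} there is $\bar b\in\overline K$ with $\bar\kappa=\bar b\,\bar\sigma(\bar b)^{-1}$. Choose a lift $b\in K_n$ of $\bar b$; since the projection $K_n\to\overline K$ is a homomorphism intertwining $\sigma$ and $\bar\sigma$, the element $b\sigma(b)^{-1}$ maps to $\bar\kappa$, whence $\kappa\,(b\sigma(b)^{-1})^{-1}\in K_{n+1}$. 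Finally set $k'_{n+1}=k'_nb\in k'_nK_n$; then, using $k\sigma(k'_n)=k'_n\kappa$ and the normality of $K_{n+1}$,
$$
k\,\bigl(k'_{n+1}\sigma(k'_{n+1})^{-1}\bigr)^{-1}
= k'_n\Bigl(\kappa\,(b\sigma(b)^{-1})^{-1}\Bigr)(k'_n)^{-1}\in K_{n+1},
$$
which completes the induction.

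The essential ingredient is \ref{4.4}, which disposes of the abelian quotients; everything else is bookkeeping in the (non-abelian) group $K$. The point requiring care is that this bookkeeping be carried out in the correct order: one must check at each stage that the conjugations appearing keep the error term inside the intended $K_{n+1}$, and --- the crux --- that replacing $k$ by $\kappa=(k'_n)^{-1}k\sigma(k'_n)$ preserves the twisted relation $\sigma(\,\cdot\,)^{-1}=\,\cdot\,$, since only then may \ref{4.4} be applied to $\bar\kappa$. The passage to the limit is the routine part, using nothing beyond compactness of $K$, the $\sigma$-stability of the $K_n$, and $\bigcap_nK_n=\{e\}$.
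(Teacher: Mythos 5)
Your proposal is correct and follows essentially the same successive-approximation argument as the paper: your $k'_n$ is the paper's partial product $z_{n-1}=y_1\cdots y_{n-1}$, your $\kappa$ is the paper's $k_n=z_{n-1}^{-1}k\sigma(z_{n-1})$, and your correction $b$ at stage $n$ is the paper's $y_n$, with \ref{4.4} applied to $K_n/K_{n+1}$ in both cases. The only cosmetic difference is that you observe directly that $(k'_n)$ is Cauchy, whereas the paper extracts a convergent subsequence by compactness — the resulting limit and conclusion are identical.
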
 
\begin{proof} 
Our discussion below is similar to that of 
\cite[Theorem 6.8]{PR}. Look at the involution 
on the quotient $K_1/K_2$ induced by $\sigma$. 
If a given $k\in K=K_1$ satisfies $\sigma(k)^{-1}=k$, then 
by \ref{4.4}, there exists an 
element $y_1\in K_1$ such that 
$$
y_1\sigma(y_1)^{-1}\equiv k\mod K_2. 
$$
Set $k_2=y_1^{-1}k\sigma(y_1)$. It is an element 
of $K_2$ and satisfies 
$$
\sigma(k_2)^{-1}=y_1^{-1}\sigma(k)^{-1}\sigma(y_1)
=y_1^{-1}k\sigma(y_1)
=k_2.
$$
Looking at the involution on $K_2/K_3$ 
induced by $\sigma$, there exists an element
$y_2\in K_2$ such that 
$$
y_2\sigma(y_2)^{-1}\equiv k_2\mod K_3. 
$$
Set $k_3=y_2^{-1}k_2\sigma(y_2)\in K_3$. 
We have $\sigma(k_3)^{-1}=k_3$. 
In this way, we can take 
$k_{n+1}$, $y_{n+1}\in K_{n+1}$ from 
$k_n$, $y_n\in K_{n}$ by the rules 
$$
k_{n+1}=y_n^{-1}k_n\sigma(y_n),\quad 
y_{n+1}\sigma(y_{n+1})^{-1}\equiv k_{n+1}
\mod K_{n+2}. 
$$
Consider the sequence $\{z_n\}$ in $K$ 
defined by 
$z_n=y_1y_2\cdots y_n$. 
It has a 
subsequence $\{z_{n_{\nu}}\}$ 
which converges to an element, 
say $k'$, of $K$. Note that 
$$
k_{n+1}=y_n^{-1}\cdots y_1^{-1}k\sigma(y_1)\cdots\sigma(y_n)
=z_n^{-1}k\sigma(z_n)\in K_{n+1}. 
$$
Thus 
$$
(k')^{-1}k\sigma(k')\in \bigcap_{\nu} K_{n_{\nu}+1}=\{e\},
$$
which shows the claim. 
\end{proof}

\begin{Lemma} \label{4.6}
Let $\mathbf{P}=\mathbf{M}
\ltimes\mathbf{U}$ be a $\sigma$-split 
parabolic $F$-subgroup which is standard with 
respect to $(\mathbf{S}_0, 
\mathbf{A}_{\emptyset}, \Delta)$ and 
$K=K_n$ $(n\geqq 1)$ a $\sigma$-stable 
open compact subgroup of $G$ from the family 
$\{K_n\}$ adapted to 
$(\mathbf{S}_0, 
\mathbf{A}_{\emptyset}, \Delta)$. 
Then, 
$$
U_K\subset HM_KU_K^-.
$$
\end{Lemma}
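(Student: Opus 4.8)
The plan is to reformulate the asserted inclusion as a twisted $\sigma$-descent problem inside the compact group $K$ and to solve that problem by a parabolic-aware refinement of the argument proving Lemma~\ref{4.5}. First, fix $u\in U_K$. Because $\mathbf{P}$ is $\sigma$-split and $K$ is $\sigma$-stable, $\sigma(U_K)=U_K^-$, so $x:=u^{-1}\sigma(u)$ lies in $U_KU_K^-\subset K$ and satisfies $\sigma(x)^{-1}=x$. I claim it is enough to produce an element $k'\in P^-\cap K$ with $x=k'\sigma(k')^{-1}$. Granting this, set $q=(k')^{-1}\in P^-\cap K$ and $h=uq^{-1}$; from $q^{-1}\sigma(q)=k'\sigma(k')^{-1}=x=u^{-1}\sigma(u)$ one obtains $uq^{-1}=\sigma(u)\sigma(q)^{-1}$, i.e.\ $\sigma(h)=h$, so $h\in H$. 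Moreover $u=hq$, and since in the Iwahori factorization of an element of $K$ lying in $P^-$ the $U_K$-factor is trivial, one has $P^-\cap K=M_KU_K^-$; writing $q=mv$ with $m\in M_K$ and $v\in U_K^-$ then gives $u=hmv\in HM_KU_K^-$, as required.

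To construct $k'$, I would run the iteration in the proof of Lemma~\ref{4.5}, applied to the compact group $K=K_n$ with its $\sigma$-stable filtration $\{K_j\}_{j\ge n}$ (whose hypotheses are furnished by Lemma~\ref{4.3}), while arranging that each successive approximant lie in $P^-$. At the step corresponding to $K_j$ one is given an element of $K_j$ fixed by $g\mapsto\sigma(g)^{-1}$ and must find $y\in K_j$ with $y\sigma(y)^{-1}$ congruent to it modulo $K_{j+1}$. In the abelian $p$-group $\overline{K_j}:=K_j/K_{j+1}$, the bijectivity \eqref{4.3(3)} of Lemma~\ref{4.3} yields a direct product decomposition
$$
\overline{K_j}=\overline{U^-_{K_j}}\times\overline{M_{K_j}}\times\overline{U_{K_j}},
$$
in which $\sigma$ fixes the middle factor and interchanges the outer two. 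Writing the class of the given element as $\bar a\,\bar m\,\bar b$ in this decomposition, the condition $\sigma(\,\cdot\,)^{-1}=\,\cdot\,$ forces $\sigma(\bar m)=\bar m^{-1}$ and $\sigma(\bar a)^{-1}=\bar b$. Since $p$ is odd, $\bar m$ has a square root $\bar m^{1/2}\in\overline{M_{K_j}}$, necessarily satisfying $\sigma(\bar m^{1/2})=(\bar m^{1/2})^{-1}$; putting $\bar y=\bar a\,\bar m^{1/2}$ one computes $\bar y\,\sigma(\bar y)^{-1}=\bar a\,\bar m\,\sigma(\bar a)^{-1}=\bar a\,\bar m\,\bar b$, the prescribed class, while $\bar y$ lifts to an element $y\in(U^-\cap K_j)(M\cap K_j)\subset P^-\cap K_j$. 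Consequently every partial product of the chosen approximants lies in the closed subgroup $P^-\cap K$, so the limit $k'$ extracted at the end of the proof of Lemma~\ref{4.5} lies in $P^-\cap K$ and satisfies $x=k'\sigma(k')^{-1}$, completing the construction.

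The step I expect to be the main obstacle is precisely this last one: forcing the $\sigma$-descent iteration of Lemma~\ref{4.5} to stay inside $\mathbf{P}^-$. It relies on the $\sigma$-equivariant Iwahori factorization of the successive graded quotients $K_j/K_{j+1}$ — available exactly because the family $\{K_n\}$ is adapted to $(\mathbf{S}_0,\mathbf{A}_{\emptyset},\Delta)$ — and on the availability of the square roots $\bar m^{1/2}$, which is where the hypothesis that the residual characteristic of $F$ is odd is used.
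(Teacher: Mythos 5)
Your proof is correct, but it takes a somewhat longer route than the paper's. You reduce to producing $k'\in P^-\cap K$ with $u^{-1}\sigma(u)=k'\sigma(k')^{-1}$, and construct such a $k'$ by re-running the iteration from the proof of Lemma~\ref{4.5} with a parabolic-aware Iwahori factorization imposed at each graded level $K_j/K_{j+1}$, so that every approximant lies in $P^-$. The paper instead performs the Iwahori factorization exactly once, in $K$ itself: writing $k:=u^{-1}\sigma(u)=u^-\,m\,u^+$ and using $\sigma(k)^{-1}=k$ with the uniqueness of the factorization to get $u^+=\sigma(u^-)^{-1}$ and $\sigma(m)^{-1}=m$, it then applies Lemma~\ref{4.5} as a black box to the group $M_K$ (whose filtration hypotheses are furnished by property \eqref{4.3(4)} of Lemma~\ref{4.3}) to find $m'\in M_K$ with $m=m'\sigma(m')^{-1}$; this yields $u^{-1}\sigma(u)=(u^-m')\,\sigma(u^-m')^{-1}$, hence $u\,u^-m'\in H$ and $u\in HM_KU_K^-$. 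Both arguments rest on the same two ingredients — the $\sigma$-compatible Iwahori factorization and the $\sigma$-twisted descent via odd-$p$ square roots — but the paper's version is more economical: by stripping off the $U^-$ and $U$ layers in a single step it confines the descent to $M_K$ and avoids re-deriving a $P^-$-constrained variant of Lemma~\ref{4.5}.
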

\begin{proof} 
For a given $u\in U_K$, consider the element $k:=
u^{-1}\sigma(u)
\in K$. By using the Iwahori 
factorization with respect to $P$, 
express $k$ as 
$$
k=u^-\cdot m\cdot u^+,\quad u^-\in U_K^-,\,
m\in M_K,\,u^+\in U_K.
$$
Since $\sigma(k)^{-1}=k$, 
we have 
$$
\sigma(u^+)^{-1}\cdot\sigma(m)^{-1}\cdot 
\sigma(u^-)^{-1}=u^-\cdot m\cdot u^+
$$ 
so that 
$u^+=\sigma(u^-)^{-1}$ and 
$\sigma(m)^{-1}=m$. Here note that 
the group $M_K$ satisfies the assumption of Lemma 
\ref{4.5}. 
Thus we can take 
an element $m'\in M_K$ such that 
$m=m'\sigma(m')^{-1}$. 
As a result,  we have 
$$
u^{-1}\sigma(u)=k=u^-\cdot m'\sigma(m')^{-1}
\cdot \sigma(u^-)^{-1}=
u^-m'\sigma(u^-m')^{-1}.
$$
This shows that $uu^-m'\in H$, hence
$u\in  HM_KU_K^-$. 
\end{proof}

\section{Invariant linear forms 
on Jacquet modules} \label{S5}

For a given 
$H$-invariant linear form $\lambda$ on an 
$H$-distinguished representation, 
we shall construct a canonical linear form 
$r_P(\lambda)$ on 
the Jacquet module along each 
$\sigma$-split 
parabolic $F$-subgroup $\mathbf{P}$ by 
using Casselman's canonical lifting. 
It turns out that $r_P(\lambda)$ is 
$M\cap H$-invariant where 
$M=P\cap\sigma(P)$. 
We have a useful relation between 
$(H,\lambda)$-matrix coefficients 
and $(M\cap H, r_P(\lambda))$-matrix 
coefficients on the $(\sigma,F)$-split 
component of $\mathbf{P}$. 

\subsection{} \label{5.1}
From now on, we say briefly that $P$ is a $\sigma$-split 
parabolic subgroup of $G$ if it is the 
group of $F$-points of a $\sigma$-split 
parabolic $F$-subgroup $\mathbf{P}$ of 
$\mathbf{G}$. Also we say that $S$ is the 
$(\sigma,F)$-split 
component of $P$ if it is the group of 
$F$-points of the $(\sigma,F)$-split component 
$\mathbf{S}=\mathbf{S}_I$ of 
$\mathbf{P}=\mathbf{P}_I$, and so on. 
As a Levi subgroup of a $\sigma$-split parabolic 
subgroup, we always take the $\sigma$-stable one as in 
\ref{2.3}. 

Let $(\pi,V)$ be an admissible 
representation of $G$ and $P=M\ltimes\nolinebreak U$ a 
$\sigma$-split parabolic subgroup with the 
$(\sigma,F)$-split component $S$. 
We regard $P$ as a standard one with respect to 
a suitable choice of $(\mathbf{S}_0, 
\mathbf{A}_{\emptyset}, \Delta)$. 
Let $(\pi_P, V_P)$ denote 
the normalized Jacquet module 
of $(\pi,V)$ along $P$: The space $V_P$ is given as 
the quotient $V/V(U)$ where 
$V(U)$ denotes the subspace of $V$ spanned by 
all the elements of the form $\pi(u)v-v$, 
$v\in V$, $u\in U$. The action $\pi_P$ of 
$M$ on $V_P$ is normalized so that
$$
\pi_P(m)j_P(v)=
\delta_P^{-1/2}(m)j_P(\pi(m)v)
$$
for $m\in M$ and $v\in V$. 
Here $j_P:V\to V_P$ denotes 
the  canonical projection and 
$\delta_P$ the modulus of $P$. 

\subsection{} \label{5.2}
Let us recall the construction of 
Casselman's canonical lifting. 
For a compact subgroup $K$ of $G$, let $V^K$ be the subspace 
of $V$ of all $K$-fixed vectors and 
${\mathcal P}_K: V\to V^K$ the projection operator 
given by 
$$
{\mathcal P}_K(v)=\dfrac{1}{{\rm{vol}}(K)}
\int_K\pi(k)vdk\quad(v\in V).
$$ 
For a compact subgroup $U_1$ of $U$, set 
$$
V(U_1)=\left\{\,v\in V\,\Biggm| \int_{U_1}\pi(u)vdu=0\,\right\}.
$$
It is known that 
$V(U)$ is the union of all
$V(U_1)$ where 
$U_1$ ranges over all compact subgroups of $U$. 
Now, for a given $\overline{v}\in V_P$, take an 
open compact subgroup $K=K_n$ from the family 
adapted to 
$(\mathbf{S}_0, 
\mathbf{A}_{\emptyset}, \Delta)$ 
such that 
$\overline{v}\in(V_P)^{M_K}$. Next choose an open compact
subgroup $U_1$ of $U$ with $V^K\cap V(U)\subset 
V(U_1)$. 
Finally we take a positive real number 
$\varepsilon\leqq 1$ such that 
$sU_1s^{-1}\subset U_K$ for all 
$s\in S^-(\varepsilon)$ by Lemma \ref{2.8}. 
Since 
$S^-(\varepsilon)$ is contained in 
$A^-(\varepsilon)$, we may replace 
$A^-(\varepsilon)$ in the argument of {\cite[\S 4]{C}} by 
$S^-(\varepsilon)$. 
We have an isomorphism 
$$
{\mathcal
P}_K\left(\pi(s)V^K\right)
\stackrel{\simeq}{\longrightarrow}\left(V_P\right)^{M_K} 
$$
by the restriction of the canonical projection 
$j_P:V\to V_P$ as in \cite[4.1.4]{C}. 
The element $v\in 
{\mathcal P}_K\left(\pi(s)V^K\right)$ satisfying 
$j_P(v)=\overline{v}$ is called {\it the canonical 
lift of $\overline{v}\in V_P$ with respect to $K$}. 
It depends on the choice of $K$, but not on 
$U_0$ and $\varepsilon$. 
If $v'$ is another canonical lift  of $\overline{v}$ 
with respect to $K'\subset K$, 
then we have 
$$
v'\in V^{M_KU^-_K},\quad
v={\mathcal P}_K(v')={\mathcal P}_{U_K}(v') 
$$
by \cite[4.1.8]{C}. 

\begin{Proposition} \label{5.3}
Let $\lambda$ be an
$H$-invariant linear  form on an admissible 
representation $(\pi,V)$ of $G$ and 
$P$ a $\sigma$-split parabolic subgroup 
standard with respect to
$(\mathbf{S}_0,\mathbf{A}_{\emptyset},\Delta)$. 

$(1)$ For $K=K_n$ $(n\geqq 1)$ in the family 
$\{K_n\}$ adapted to
$(\mathbf{S}_0,\mathbf{A}_{\emptyset},\Delta)$ 
and $v\in V^{M_KU^-_K}$, 
one has 
$$
\langle\,\lambda\,,\,v\,\rangle=
\langle\,\lambda\,,\,{\mathcal P}_{U_K}(v)\,\rangle.
$$ 

$(2)$ For two canonical lifts 
$v$, $v'\in V$ of a given $\overline{v}\in V_P$, 
one has 
$$
\langle\,\lambda\,,\,v\,\rangle=
\langle\,\lambda\,,\,v'\,\rangle. 
$$
\end{Proposition}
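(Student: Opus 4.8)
The plan is to handle the two parts in turn, the first being the substantive one and the second a formal consequence of it together with Casselman's results recalled in \ref{5.2}. In both parts the only non-formal input is Lemma \ref{4.6}.

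For part (1), I would write out
$$
\langle\,\lambda\,,\,{\mathcal P}_{U_K}(v)\,\rangle
=\dfrac{1}{{\rm vol}(U_K)}\int_{U_K}
\langle\,\lambda\,,\,\pi(u)v\,\rangle\,du,
$$
the integral being in fact a finite average since $v$ is a smooth vector, and then show that the integrand is independent of $u$. This is exactly where Lemma \ref{4.6} enters: given $u\in U_K\subset HM_KU_K^-$, write $u=h\,m\,u^-$ with $h\in H$, $m\in M_K$, $u^-\in U_K^-$. Since $v\in V^{M_KU_K^-}$ is fixed by $M_K$ and by $U_K^-$, we get $\pi(u)v=\pi(h)\pi(m)\pi(u^-)v=\pi(h)v$, whence $\langle\lambda,\pi(u)v\rangle=\langle\lambda,\pi(h)v\rangle=\langle\lambda,v\rangle$ by the $H$-invariance of $\lambda$. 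Carrying out the integration then gives $\langle\lambda,{\mathcal P}_{U_K}(v)\rangle=\langle\lambda,v\rangle$.

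For part (2), let $v$ and $v'$ be the canonical lifts of $\overline v$ with respect to $K$ and $K'$ respectively, both members of the adapted family $\{K_n\}$. Choose a member $K''=K_m\subset K\cap K'$ (which exists since $\{K_n\}$ is a fundamental system of neighbourhoods of $e$) and let $v''$ be the canonical lift of $\overline v$ with respect to $K''$. By the results of \cite[4.1.8]{C} quoted in \ref{5.2}, we have $v''\in V^{M_KU_K^-}$ and $v={\mathcal P}_{U_K}(v'')$, so applying part (1) to the vector $v''$ with the subgroup $K$ gives
$$
\langle\,\lambda\,,\,v\,\rangle=\langle\,\lambda\,,\,{\mathcal P}_{U_K}(v'')\,\rangle=\langle\,\lambda\,,\,v''\,\rangle.
$$
The same argument with $K'$ in place of $K$ gives $\langle\lambda,v'\rangle=\langle\lambda,v''\rangle$, and therefore $\langle\lambda,v\rangle=\langle\lambda,v'\rangle$. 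I do not anticipate a genuine obstacle here: part (1) is Lemma \ref{4.6} plus $H$-invariance, and part (2) only requires the care to pass to a common refinement $K''$ so that the cited identity ${\mathcal P}_{U_K}(v'')=v$ is available.
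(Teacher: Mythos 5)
Your proof is correct and follows essentially the same route as the paper: part (1) is the observation that each $u\in U_K$ factors as $hmu^-$ via Lemma \ref{4.6}, so $\langle\lambda,\pi(u)v\rangle=\langle\lambda,v\rangle$ and the average over $U_K$ does nothing; part (2) reduces to part (1) via the identity $v={\mathcal P}_{U_K}(v'')$ from \cite[4.1.8]{C}. The only cosmetic difference is that the paper reduces without loss of generality to the case $K'\subset K$ (the adapted family is a decreasing chain, so this is automatic), whereas you pass through a common refinement $K''$; both say the same thing.
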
 
\begin{proof} 
(1) Let $U_1$ be an open compact subgroup 
of $U_K$ which fixes $v$. Then we have 
\begin{eqnarray*}
\langle\,\lambda\,,\,{\mathcal P}_{U_K}(v)\,\rangle
&=&\langle\,\lambda\,,\,\dfrac{1}{{\rm{vol}}(U_K)}
\int_{U_K}\pi(u)vdu\,\rangle \\
&=&\langle\,\lambda\,,\,
\dfrac{{\rm{vol}}(U_1)}{{\rm{vol}}(U_K)}
\sum_{u_i\in U_K/U_1}\pi(u_i)v\,\rangle. 
\end{eqnarray*}
Let us express each $u_i\in U_K$ as $u_i=h_im_iu^-_i$ where 
$h_i\in H$, 
$m_i\in M_K$ and $u^-_i\in U^-_K$ by Lemma \ref{4.6}. Then 
this is equal to 
$$
\langle\,\lambda\,,\,
\dfrac{{\rm{vol}}(U_1)}{{\rm{vol}}(U_K)}
\sum_i\pi(h_im_iu^-_i)v\,\rangle
=\langle\,\lambda\,,\,v\,\rangle
$$
since $\lambda$ is $H$-invariant and 
$v\in V^{M_KU^-_K}$. 

(2) Assume that $v$ (resp. $v'$) is the canonical lift of 
$\overline{v}\in V_P$ with respect to $K$ (resp. $K'$). 
It is enough to consider the case where 
$K'\subset K$. 
By the remark preceding this proposition, we have 
$$
v'\in V^{M_KU^-_K},\quad
v={\mathcal P}_{U_K}(v'). 
$$
It follows from (1) that 
$$
\langle\,\lambda\,,\,v\,\rangle=
\langle\,\lambda\,,\,{\mathcal P}_{U_K}(v')\,\rangle 
=
\langle\,\lambda\,,\,v'\,\rangle. 
$$ 
\end{proof}

\subsection{} \label{5.4}
After the above proposition, we can define 
a linear form $r_P(\lambda)$ on the Jacquet module $V_P$ 
along a $\sigma$-split parabolic subgroup $P$ by 
$$
\langle r_P(\lambda),\overline{v}\rangle
=\langle\lambda,v\rangle
$$
where $v\in V$ is a canonical lift of $\overline{v}
\in V_P$. 

\begin{Proposition} \label{5.5}
Let $\lambda$ be an
$H$-invariant linear  form on an admissible 
representation $(\pi,V)$ of $G$ and 
$P=M\ltimes U$ a 
$\sigma$-split  parabolic subgroup of $G$ with the 
$(\sigma,F)$-split component $S$.   

$(1)$ For each $v\in V$, 
there exists 
a positive real number $\varepsilon\leqq 1$ such 
that  
$$
\langle\lambda,\pi(s)v\rangle
=
\delta_P^{1/2}(s)
\langle r_P(\lambda),\pi_P(s)j_P(v)\rangle
$$
for any $s\in S^-(\varepsilon)$. 

$(2)$ Assume that 
$\overline{\lambda}$ is a linear form on $V_P$ having the
following property: For each $v\in V$, 
there exists a positive
real number $\varepsilon
\leqq 1$ such that   
$$
\langle \lambda,\pi(s)v\rangle
=\delta_P(s)^{1/2}\langle \overline{\lambda},\pi_P(s)j_P(v)
\rangle
$$
for any $s\in S^-(\varepsilon)$. 
Then $\overline{\lambda}$ 
coincides with $r_P(\lambda)$. 
\end{Proposition}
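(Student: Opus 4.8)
The plan is as follows. Throughout, regard $P$ as standard with respect to some $(\mathbf{S}_0,\mathbf{A}_{\emptyset},\Delta)$, say $P=P_I$ for a $\sigma$-split subset $I\subseteq\Delta$, as in \ref{5.1}. For part~(1), fix $v\in V$ and choose $K=K_n$ $(n\geqq1)$ from the adapted family with $v\in V^K$, so $j_P(v)\in(V_P)^{M_K}$; let $\varepsilon\leqq1$ be the number attached to $K$ as in \ref{5.2} (so that for $s\in S^-(\varepsilon)$ the restriction of $j_P$ to $\mathcal{P}_K(\pi(s)V^K)$ is the isomorphism onto $(V_P)^{M_K}$), shrinking $\varepsilon$ so that $S^-(\varepsilon)\subseteq S^-(1)$. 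The first observation is that $\pi(s)v\in V^{M_KU^-_K}$ for every $s\in S^-(1)$: $M_K$ fixes $\pi(s)v$ because $s$ centralizes $M\supseteq M_K$ and $v\in V^{M_K}$, and $U^-_K$ fixes $\pi(s)v$ because $s^{-1}U^-_Ks\subseteq U^-_K\subseteq K$ fixes $v$ by Lemma~\ref{4.3}\,\eqref{4.3(3)}. Next, for any $w\in V^{M_KU^-_K}$ and $k\in K$, writing $k=u\,m\,u^-$ with $u\in U_K$, $m\in M_K$, $u^-\in U^-_K$ (the Iwahori factorization of Lemma~\ref{4.3}\,\eqref{4.3(3)} read as $K=U_KM_KU^-_K$, legitimate because inversion is a bijection of $K$) one has $\pi(k)w=\pi(u)w$; hence $j_P(\pi(k)w)=j_P(w)$ (as $\pi(u)w-w\in V(U)=\ker j_P$), so $j_P(\mathcal{P}_K(w))=j_P(w)$, and, expressing $u\in U_K$ as $u=h\,m'\,(u^-)'$ with $h\in H$ by Lemma~\ref{4.6}, also $\langle\lambda,\pi(k)w\rangle=\langle\lambda,\pi(h)w\rangle=\langle\lambda,w\rangle$, whence $\langle\lambda,\mathcal{P}_K(w)\rangle=\langle\lambda,w\rangle$ by the averaging argument of Proposition~\ref{5.3}\,(1). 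Now take $w=\pi(s)v$ with $s\in S^-(\varepsilon)$: since $\mathcal{P}_K(\pi(s)v)\in\mathcal{P}_K(\pi(s)V^K)$ and $j_P(\mathcal{P}_K(\pi(s)v))=j_P(\pi(s)v)$, the vector $\mathcal{P}_K(\pi(s)v)$ is the canonical lift of $j_P(\pi(s)v)$ with respect to $K$, so $\langle r_P(\lambda),j_P(\pi(s)v)\rangle=\langle\lambda,\mathcal{P}_K(\pi(s)v)\rangle$ by \ref{5.4}; combining with $\langle\lambda,\mathcal{P}_K(\pi(s)v)\rangle=\langle\lambda,\pi(s)v\rangle$ and $j_P(\pi(s)v)=\delta_P^{1/2}(s)\,\pi_P(s)j_P(v)$ gives the identity in~(1).

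For part~(2), put $\mu=r_P(\lambda)-\overline\lambda\in V_P^*$; by~(1) and the hypothesis on $\overline\lambda$, for each $\overline v\in V_P$ there is $\varepsilon\leqq1$ with $\langle\mu,\pi_P(s)\overline v\rangle=0$ for all $s\in S^-(\varepsilon)$, and it remains to deduce $\mu=0$. Fix $\overline v\in V_P$. The Jacquet module $V_P$ is admissible, so $\overline v$ lies in a finite-dimensional subspace $W=(V_P)^{M'}$ for some open compact $M'\subseteq M$, and $S$, being central in $M$, stabilizes $W$. Choose $s_1\in S$ with $|s_1^{\alpha}|_F<1$ for all $\alpha\in\Delta\setminus I$ (a strictly contracting element; its existence for the proper $\sigma$-split parabolic $P$ is what makes Lemma~\ref{2.8} non-vacuous), so that $s_1^{n}\in S^-(\varepsilon)$ for all large $n$ and hence $\langle\mu,T^{n}\overline v\rangle=0$ for all large $n$, where $T:=\pi_P(s_1)$ acts invertibly on $W$. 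The vectors $T^{n}\overline v$ $(n\geqq0)$ span a finite-dimensional $T$-stable subspace on which $T$ is invertible, so they satisfy a relation $T^{d}\overline v=\sum_{i=0}^{d-1}c_iT^{i}\overline v$ with $c_0\neq0$ (the minimal polynomial of $\overline v$ under $T$ has nonzero constant term, since $0$ is not an eigenvalue of $T$ on this subspace); thus $a_n:=\langle\mu,T^{n}\overline v\rangle$ satisfies the same recurrence for all $n\geqq0$, which can be run backwards, so $a_n=0$ for large $n$ forces $a_0=\langle\mu,\overline v\rangle=0$. Since $\overline v$ was arbitrary, $\mu=0$, that is, $\overline\lambda=r_P(\lambda)$.

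I expect the main obstacle to lie in part~(1), in the identification of $\mathcal{P}_K(\pi(s)v)$ with the canonical lift of $j_P(\pi(s)v)$ with respect to $K$: one must check both that $\mathcal{P}_K(\pi(s)v)$ lies in the ``stable range'' subspace $\mathcal{P}_K(\pi(s)V^K)$ and that it projects under $j_P$ onto the intended vector, and this is where the properties of the adapted family (Lemma~\ref{4.3}), Lemma~\ref{4.6}, and the precise form of Casselman's construction recalled in \ref{5.2} all enter, together with the care needed about how deep into $S^-$ one must push $s$ (depending on $v$ and $K$). In part~(2) the single delicate point is securing the strictly contracting $s_1$; granting that, the rest is elementary linear algebra plus the admissibility of $V_P$.
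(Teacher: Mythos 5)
Part (1) of your argument is essentially the paper's: you identify $\mathcal{P}_K(\pi(s)v)$ with the canonical lift of $j_P(\pi(s)v) = \delta_P^{1/2}(s)\,\pi_P(s)j_P(v)$ and invoke the $H$-invariance of $\lambda$ through Lemma \ref{4.6}. The minor reorganization is that you work directly with $\mathcal{P}_K$ (using the reversed Iwahori factorization $K = U_K M_K U^-_K$, which is legitimate by inversion) instead of passing through $\mathcal{P}_{U_K}$ and Proposition \ref{5.3}\,(1) as the paper does; since $\mathcal{P}_K(w) = \mathcal{P}_{U_K}(w)$ for $w \in V^{M_K U^-_K}$, this is a cosmetic difference. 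The subtlety you flag --- that $\mathcal{P}_K(\pi(s)v)$ must be shown to lie in the stable-range subspace and to project onto the intended vector --- is exactly the one the paper handles via the adapted family and Casselman's construction.

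For part (2) your route is genuinely different and correct. The paper re-runs the canonical-lift computation of part (1) uniformly over $v \in V^K$ (getting a single $\varepsilon$ for all of $V^K$ by finite-dimensionality and choosing $\varepsilon$ so that the stable range is independent of $s$), then observes that for a \emph{single} $s \in S^-(\varepsilon)$ the equality $\langle\overline{\lambda},\pi_P(s)j_P(v)\rangle = \langle r_P(\lambda),\pi_P(s)j_P(v)\rangle$ on $\pi_P(s)(V_P)^{M_K} = (V_P)^{M_K}$ already gives $\overline{\lambda} = r_P(\lambda)$ on $(V_P)^{M_K}$; letting $K$ vary finishes. You instead treat part (1) as a black box, set $\mu = r_P(\lambda) - \overline{\lambda}$, and kill $\mu$ by a purely representation-theoretic argument: admissibility of the Jacquet module $V_P$ (Jacquet's theorem), a strictly contracting $s_1 \in S$, invertibility of $\pi_P(s_1)$ on a finite-dimensional smooth subspace, and a linear recurrence with nonzero constant term run backwards. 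Both are valid. The paper's version is slightly leaner --- a single deep $s$ suffices, no recurrence and no appeal to admissibility of the Jacquet module --- whereas yours is more modular and conceptually transparent (uniqueness of $r_P(\lambda)$ is derived from the asymptotic identity alone rather than from unwrapping its construction again). The one point you correctly single out as requiring care is the existence of a strictly contracting $s_1 \in S_I$ when $P_I$ is proper; this is indeed implicit throughout the paper (Lemma \ref{2.8} and the support estimate in \ref{6.8} would be vacuous otherwise, as the restrictions of the $\alpha \in \Delta\setminus I$ to $\mathbf{S}_I$ are the simple restricted roots and lie in a salient cone), but since the paper never needs to invoke it in this proof, you are adding one extra hypothesis-check that the original argument sidesteps.
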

\begin{proof} 
(1) For a given $v\in V$, choose 
an open compact 
subgroup $K=K_n$ from the adapted family 
such that $v\in V^K$. 
Take an open compact subgroup $U_1$ of $U_K$ with 
$V^K\cap V(U)\subset V(U_1)$. Let $\varepsilon\leqq 1$ be a 
positive real number such that 
$sU_1s^{-1}$ is contained in $U_K$ 
for all $s\in S^-(\varepsilon)$. Then, by the 
Iwahori factorization with respect to $P$, we have 
$\pi(s)v\in V^{M_KU^-_K}$ so that 
$$
j_P(\pi(s)v)=
j_P({\mathcal P}_{U_K}(\pi(s)v))=
j_P({\mathcal P}_K(\pi(s)v)) 
$$
for all $s\in S^-(\varepsilon)$. On the other hand, 
since $j_P(v)\in (V_P)^{M_K}$ and $s$ is central in $M$, 
we have $\pi_P(s)j_P(v)\in (V_P)^{M_K}$ so that 
$$
j_P(\pi(s)v)=\delta_P^{1/2}(s)\pi_P(s)j_P(v)\in 
(V_P)^{M_K}. 
$$
These relations show that ${\mathcal P}_K(\pi(s)v)=
{\mathcal P}_{U_K}(\pi(s)v)$ is a canonical lift 
of $\delta_P^{1/2}(s)\pi_P(s)j_P(v)$. Thus, by
definition we have 
$$
\langle r_P(\lambda),\delta_P(s)^{1/2}\pi_P(s)j_P(v)
\rangle=\langle \lambda,{\mathcal P}_{U_K}
\left(\pi(s)v\right)\rangle 
$$
and the right hand side is equal to 
$\langle \lambda,\pi(s)v\rangle$ by Lemma \ref{5.3} (1). 

(2) Take an open compact subgroup $K=K_n$ from 
the adapted family. Let 
$\varepsilon$ be a positive real 
number such that 
$$
\langle \lambda,\pi(s)v\rangle
=\delta_P(s)^{1/2}\langle \overline{\lambda},\pi_P(s)j_P(v)
\rangle
$$
for all $s\in S^-(\varepsilon)$ and all 
$v\in V^K$. This is possible since 
$V^K$ is finite dimensional. 
We may choose this $\varepsilon$
so that the space 
${\mathcal P}_K(\pi(s)V^K)$ is independent of 
$s\in S^-(\varepsilon)$ by \cite[4.1.6]{C}. 
Since 
$\pi(s)v\in V^{M_KU^-_K}$, we have 
$$
\langle \lambda,\pi(s)v\rangle=
\langle \lambda,{\mathcal P}_{U_K}(\pi(s)v)\rangle
=\langle \lambda,{\mathcal P}_K(\pi(s)v)\rangle
$$
for all $s\in S^-(\varepsilon)$. 
On the other hand, 
${\mathcal P}_K(\pi(s)v)$ is a canonical
lift  of $\delta_P^{1/2}(s)\pi_P(s)j_P(v)$ again, 
so we have 
$$
\langle \lambda,{\mathcal P}_K(\pi(s)v)\rangle=
\delta_P^{1/2}(s)
\langle r_P(\lambda),\pi_P(s)j_P(v)\rangle.
$$
As a result, we have 
$$
\langle \overline{\lambda},\pi_P(s)j_P(v)
\rangle=\langle r_P(\lambda),\pi_P(s)j_P(v)\rangle
$$
for any $s\in S^-(\varepsilon)$. Since $j_P(V^K)=
(V_P)^{M_K}$ (see \cite[3.3.3]{C}), 
this shows that $\overline{\lambda}=
r_P(\lambda)$ on $(V_P)^{M_K}$. Letting $K$ vary in 
the adapted family, we 
conclude that 
$\overline{\lambda}=
r_P(\lambda)$ on $V_P$. 
\end{proof}

\begin{Proposition} \label{5.6}
Let $\lambda$ be an
$H$-invariant linear  form on an admissible 
representation $(\pi,V)$ of $G$ and 
$P=M\ltimes U$ a 
$\sigma$-split  parabolic subgroup of $G$. 

$(1)$ The linear form 
$r_P(\lambda)$ on $V_P$ is 
$M\cap H$-invariant. 

$(2)$ The mapping $r_P: (V^*)^H\to (V_P^*)^{M\cap H}$ 
is linear. 
\end{Proposition}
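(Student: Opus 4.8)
The plan is to establish both assertions using the characterization of $r_P(\lambda)$ given in Proposition \ref{5.5} (2), which says that $r_P(\lambda)$ is the \emph{unique} linear form $\overline{\lambda}$ on $V_P$ satisfying the asymptotic relation $\langle\lambda,\pi(s)v\rangle=\delta_P(s)^{1/2}\langle\overline{\lambda},\pi_P(s)j_P(v)\rangle$ for $s$ in a sufficiently deep set $S^-(\varepsilon)$ depending on $v$. The idea is that linearity (part (2)) and $M\cap H$-invariance (part (1)) can each be reduced to verifying that a suitable candidate linear form satisfies this same asymptotic identity, whence it must equal $r_P(\lambda)$ (respectively a translate of it).

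For part (2), given $H$-invariant forms $\lambda_1,\lambda_2$ and scalars $c_1,c_2$, I would check that $c_1 r_P(\lambda_1)+c_2 r_P(\lambda_2)$ satisfies the defining relation for the form $c_1\lambda_1+c_2\lambda_2$: for each $v\in V$, choose $\varepsilon\leqq 1$ small enough to work simultaneously for both $\lambda_1$ and $\lambda_2$ (using Proposition \ref{5.5} (1)), and then the identity for $c_1\lambda_1+c_2\lambda_2$ follows by linearity of the pairing. By the uniqueness in Proposition \ref{5.5} (2), $r_P(c_1\lambda_1+c_2\lambda_2)=c_1 r_P(\lambda_1)+c_2 r_P(\lambda_2)$. (That $r_P(\lambda)$ is again a linear functional on $V_P$ is clear from its definition via canonical lifts, which are additive up to elements of $V(U)$.)

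For part (1), fix $h\in M\cap H$ and consider the translated form $\pi_P(h)^{\ast}r_P(\lambda)$, i.e. $\overline{v}\mapsto\langle r_P(\lambda),\pi_P(h^{-1})\overline{v}\rangle$; I want to show this equals $r_P(\lambda)$. The natural approach is again to verify the asymptotic characterization. Take $v\in V$. Since $h\in M$ normalizes $S$ and commutes with every $s\in S$ (as $S$ is central in $M$), and since $\pi(h)$ preserves the relevant structures, one computes, for $s$ deep enough,
\[
\delta_P(s)^{1/2}\langle r_P(\lambda),\pi_P(s)j_P(\pi(h^{-1})v)\rangle
=\langle\lambda,\pi(s)\pi(h^{-1})v\rangle
=\langle\lambda,\pi(h^{-1})\pi(s)v\rangle
=\langle\lambda,\pi(s)v\rangle,
\]
where the last equality uses $h^{-1}\in H$ and $H$-invariance of $\lambda$; here one must be slightly careful that the $\varepsilon$ may need to be shrunk to accommodate both $v$ and $\pi(h^{-1})v$, which is harmless. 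Comparing with the defining relation for $\lambda$ applied to $v$ itself, the form $\overline{v}\mapsto\langle r_P(\lambda),\pi_P(h^{-1})\overline{v}\rangle$ satisfies exactly the property characterizing $r_P(\lambda)$, so by Proposition \ref{5.5} (2) it \emph{is} $r_P(\lambda)$. Hence $\langle r_P(\lambda),\pi_P(h)\overline{w}\rangle=\langle r_P(\lambda),\overline{w}\rangle$ for all $\overline{w}\in V_P$, i.e. $r_P(\lambda)$ is $M\cap H$-invariant.

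The only genuinely delicate point is the bookkeeping of the asymptotic parameter $\varepsilon$: in the displayed chain above one applies Proposition \ref{5.5} (1) to the vector $\pi(h^{-1})v$ and the defining relation of $r_P(\lambda)$ to $v$, and one needs a single $\varepsilon$ valid for the finitely many vectors involved — this is fine since one may always pass to the minimum. A second small point worth stating is that $\pi_P(h)$ genuinely commutes with $\pi_P(s)$ on $V_P$, which follows from $\delta_P|_{M\cap H}\equiv 1$ (Lemma \ref{2.9}) together with $hs=sh$ in $M$, so the normalization factors cause no trouble. Once these are in place, the proof is a direct invocation of the uniqueness statement in Proposition \ref{5.5} (2).
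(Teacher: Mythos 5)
Your proof is correct and follows essentially the same route as the paper: for part (1) you set $\overline\lambda=r_P(\lambda)\circ\pi_P(h^{-1})$ (the paper uses $r_P(\lambda)\circ\pi_P(m)$ with $m\in M\cap H$), verify the asymptotic identity of Proposition \ref{5.5} via $H$-invariance of $\lambda$, centrality of $s$ in $M$, and Lemma \ref{2.9}, and conclude by uniqueness; part (2) is handled identically. (One small attribution: Lemma \ref{2.9} is really needed to identify $\pi_P(h^{-1})j_P(v)$ with $j_P(\pi(h^{-1})v)$ without a modulus factor, rather than for the commutation of $\pi_P(h)$ and $\pi_P(s)$, which holds automatically since $h$ and $s$ commute in $M$.)
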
 
\begin{proof}
(1) For a given $m\in M\cap H$, set 
$\overline{\lambda}
=r_P(\lambda)\circ\pi_P(m)$. By (1) of Proposition \ref{5.5} 
for the vector 
$\pi(m)v$, we can take 
$0<\varepsilon\leqq 1$ such that 
$$
\langle\lambda,\pi(s)\pi(m)v\rangle
=
\delta_P^{1/2}(s)
\langle r_P(\lambda),\pi_P(s)j_P(\pi(m)v)\rangle
$$
for all 
$s\in S^-(\varepsilon)$. 
Since $s$ is central in $M$, the left hand side 
is equal to 
$$
\langle\lambda,\pi(m)\pi(s)v\rangle=
\langle\lambda,\pi(s)v\rangle,
$$
while the right hand side is
equal to 
\begin{eqnarray*} 
& &\delta_P^{1/2}(s)\delta_P^{1/2}(m)
\langle r_P(\lambda),\pi_P(s)\pi_P(m)j_P(v)\rangle \\
& &=\delta_P^{1/2}(s)
\langle r_P(\lambda),\pi_P(m)\pi_P(s)j_P(v)\rangle
=\delta_P^{1/2}(s)
\langle \overline{\lambda},\pi_P(s)j_P(v)\rangle
\end{eqnarray*}
by Lemma \ref{2.9}. Thus 
$\overline{\lambda}$ has the property of 
Proposition \ref{5.5} (2), which implies that 
$r_P(\lambda)$ coincides with 
$\overline{\lambda}=r_P(\lambda)\circ\pi_P(m)$. 

(2) For any $\lambda_1$, $\lambda_2\in \left(V^*\right)^H$ 
and $c_1$, $c_2\in\Bbb C$, it is easy to see that 
$c_1r_P(\lambda_1)+c_2r_P(\lambda_2)$ 
satisfies the unique
property that 
$r_{P}(c_1\lambda_1+c_2\lambda_2)$ must have in 
Proposition \ref{5.5} (2). 
\end{proof}

\subsection{{\it The group case}} \label{5.7}
Here we note that 
the mapping $r_P$ in the situation of \ref{1.5} 
is the well-known one constructed by Casselman. 
Let $G$ 
be the group $G_1\times G_1$ 
with the involution $\sigma$ which permutes factors. 
Then $\sigma$-split parabolic subgroups are those 
of the form $P_1\times P_1^-$ 
where $P_1$ and $P_1^-$ are 
mutually opposite parabolic subgroups of $G_1$. 
Set $M_1=P_1\cap P_1^-$. 
For an irreducible $\Delta G_1$-distinguished 
representation $\pi_1\otimes\widetilde{\pi_1}$ of 
$G_1\times G_1$, let $\lambda$ be the 
$\Delta G_1$-invariant linear form on 
$\pi_1\otimes\widetilde{\pi_1}$ defined by the canonical 
pairing of  $\pi_1$ and $\widetilde{\pi_1}$ as in \ref{1.5}. 
Then $r_{P_1\times P_1^-}(\lambda)$ is 
an $(M_1\times M_1)\cap\Delta G_1=\Delta M_1$-invariant 
linear form on the Jacquet module 
$(\pi_1\otimes\widetilde{\pi_1})_{P_1\times P_1^-}\simeq 
(\pi_1)_{P_1}\otimes(\widetilde{\pi_1})_{P_1^-}$. 
It is exactly 
the one given by the pairing of the 
Jacquet modules 
$(\pi_1)_{P_1}$ and $(\widetilde{\pi_1})_{P_1^-}$ 
constructed in \cite[4.2.2]{C}. 

\subsection{} \label{5.8}
We study the transitivity 
of the mappings $r_P$ with respect to the 
inclusion of $\sigma$-split parabolic subgroups. 

Let $P=M\ltimes U$ be a $\sigma$-split parabolic 
subgroup of $G$. It is obvious that $\sigma$-split 
parabolic subgroups of $M$ are of the form 
$M\cap Q$ where $Q$ is a $\sigma$-split parabolic 
subgroup of $G$ contained in $P$. Let 
$L$ be the $\sigma$-stable Levi 
subgroup of $Q$. 
It is also the $\sigma$-stable Levi subgroup of 
$M\cap Q$. 
As is well-known, 
$\left(V_P\right)_{M\cap Q}$ is isomorphic to 
$V_Q$ as an $L$-module. Fix an isomorphism and 
identify $\left(V_P\right)_{M\cap Q}$ with $V_Q$ 
from now on. There are induced mappings 
$$
r_P:\left(V^*\right)^H\to 
\left(V_P^*\right)^{M\cap H},\quad 
r_{M\cap Q}:\left(V_P^*\right)^{M\cap H}\to 
\left((V_P)_{M\cap Q}^*\right)^{L\cap H}
$$
and 
$$
r_Q:\left(V^*\right)^H\to 
\left(V_Q^*\right)^{L\cap H}
= \left((V_P)_{M\cap Q}^*\right)^{L\cap H}
$$
of invariant linear forms. 

\begin{Proposition} \label{5.9}
For $P$ and $Q$ as above, one has 
$$
r_{M\cap Q}\circ r_P=r_Q. 
$$
In other words, the diagram 
$$\CD
\left(V^*\right)^H @>r_P>> \left(V_P^*\right)^{M\cap H}\\ 
@V r_Q VV @VV r_{M\cap Q} V \\ 
\left(V_Q^*\right)^{L\cap H} @= 
\left((V_P)_{M\cap Q}^*\right)^{L\cap H}
\endCD
$$
is commutative.
\end{Proposition}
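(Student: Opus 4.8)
The plan is to verify that $r_{M\cap Q}(r_P(\lambda))$ satisfies the characterizing asymptotic property of $r_Q(\lambda)$ given in Proposition \ref{5.5}(2), applied to the $\sigma$-split parabolic $Q$ of $G$ with $(\sigma,F)$-split component $S_Q$. Since that property uniquely pins down a linear form on $V_Q$, this will force $r_{M\cap Q}\circ r_P=r_Q$. So the task reduces to a single two-step asymptotic estimate.

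First I would fix $v\in V$ and apply Proposition \ref{5.5}(1) to the pair $(\lambda,P)$: there is $\varepsilon_1\leqq 1$ with
$$
\langle\lambda,\pi(s)v\rangle=\delta_P^{1/2}(s)\langle r_P(\lambda),\pi_P(s)j_P(v)\rangle
$$
for all $s\in S_P^-(\varepsilon_1)$. Next I would apply Proposition \ref{5.5}(1) to the pair $(r_P(\lambda),M\cap Q)$ — here the ambient group is $M$, the representation is $(\pi_P,V_P)$, and the $(\sigma,F)$-split component of $M\cap Q$ is the $(\sigma,F)$-split component $S_Q$ of $Q$ (since $S_Q$ is contained in $S_P$ and is exactly the split component of $L$ inside $M$). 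Applied to the vector $j_P(v)\in V_P$, this gives $\varepsilon_2\leqq 1$ with
$$
\langle r_P(\lambda),\pi_P(t)j_P(v)\rangle=\delta_{M\cap Q}^{1/2}(t)\langle r_{M\cap Q}(r_P(\lambda)),(\pi_P)_{M\cap Q}(t)\,j_{M\cap Q}(j_P(v))\rangle
$$
for all $t\in S_Q^-(\varepsilon_2)$. Now take $s\in S_Q^-(\varepsilon)$ with $\varepsilon=\min(\varepsilon_1,\varepsilon_2)$; such an $s$ lies in $S_P^-(\varepsilon_1)$ as well (because $\Delta\setminus I_P\subset\Delta\setminus I_Q$ when $Q\subset P$), so both relations apply with $t=s$. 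Composing them yields
$$
\langle\lambda,\pi(s)v\rangle=\delta_P^{1/2}(s)\,\delta_{M\cap Q}^{1/2}(s)\,\langle r_{M\cap Q}(r_P(\lambda)),\;(\pi_P)_{M\cap Q}(s)\,j_{M\cap Q}(j_P(v))\rangle.
$$

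It remains to identify the normalizing factors and the module. Under the fixed isomorphism $(V_P)_{M\cap Q}\simeq V_Q$ of $L$-modules, the composite projection $j_{M\cap Q}\circ j_P$ is identified with $j_Q$, and the normalized $L$-action $(\pi_P)_{M\cap Q}$ becomes $\pi_Q$; this is the standard transitivity of normalized Jacquet functors (cf. \cite[1.9, 4.1.6]{C}), and the normalization is arranged precisely so that $\delta_P\,\delta_{M\cap Q}=\delta_Q$ on $S_Q$ (the modulus of $Q$ factors through that of $P$ on $G$ and that of $M\cap Q$ on $M$, restricted to the central torus $S_Q$). Substituting, we get for all $s\in S_Q^-(\varepsilon)$
$$
\langle\lambda,\pi(s)v\rangle=\delta_Q^{1/2}(s)\,\langle r_{M\cap Q}(r_P(\lambda)),\pi_Q(s)j_Q(v)\rangle,
$$
which is exactly the hypothesis of Proposition \ref{5.5}(2) for the linear form $r_{M\cap Q}(r_P(\lambda))$ on $V_Q$. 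By the uniqueness statement there, $r_{M\cap Q}(r_P(\lambda))=r_Q(\lambda)$, proving the commutativity of the diagram. The main obstacle, and the only point requiring care, is the bookkeeping in this last paragraph: checking that the three normalized Jacquet-module identifications are mutually compatible and that the three modulus characters multiply correctly when restricted to $S_Q$ — everything else is a direct invocation of Proposition \ref{5.5}. (One should also note that $r_{M\cap Q}(r_P(\lambda))$ makes sense: $r_P(\lambda)$ is $M\cap H$-invariant by Proposition \ref{5.6}(1), so $r_{M\cap Q}$ may be applied to it.)
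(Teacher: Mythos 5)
Your overall plan is the same as the paper's: apply Proposition \ref{5.5}(1) once for $P$ and once for $M\cap Q$, compose, identify $\delta_P\delta_{M\cap Q}=\delta_Q$ and $j_{M\cap Q}\circ j_P=j_Q$, and conclude via the uniqueness in Proposition \ref{5.5}(2). The final bookkeeping paragraph is correct and matches the paper. However, there is a concrete error in how you arrange a common domain for the two asymptotic relations. Your parenthetical ``since $S_Q$ is contained in $S_P$'' is backwards: since $I_Q\subset I_P$, the $F$-split components satisfy $A_P\subset A_Q$, hence also $S_P\subset S_Q$ --- the smaller parabolic has the \emph{larger} $(\sigma,F)$-split component. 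Consequently the claim ``such an $s$ lies in $S_P^-(\varepsilon_1)$ as well'' fails. An element $s\in S_Q^-(\varepsilon)$ does satisfy $|s^{\alpha}|_F\leqq\varepsilon_1$ for all $\alpha\in\Delta\setminus I_P$ (that part of your parenthetical is fine), but $s$ need not lie in the torus $S_P$ at all. In fact, for $\varepsilon<1$ one has $S_P^-(\varepsilon_1)\cap S_Q^-(\varepsilon)=\emptyset$, because any $s\in S_P$ satisfies $s^{\alpha}=1$ for $\alpha\in I_P\setminus I_Q$, violating the $S_Q^-(\varepsilon)$ condition.

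The paper sidesteps this: it works throughout with $s$ in $S_L^-(\varepsilon)$, where $S_L$ is the $(\sigma,F)$-split component of the smaller parabolic $Q$, and asserts that \emph{both} instances of Proposition \ref{5.5}(1) --- including the one for $P$ --- already hold on that set. That is a (mildly) stronger use of Proposition \ref{5.5}(1) than its literal domain $S_P^-(\varepsilon)$, justified by the fact that elements of $S_Q^-(\varepsilon)$ are still $\varepsilon$-contracting for $U_P$ and lie in $A_Q^-(\varepsilon)$, so Casselman's canonical-lift argument goes through; it is not obtained by embedding $S_Q^-(\varepsilon)$ into $S_P^-(\varepsilon_1)$. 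To repair your writeup you should drop the false inclusion and instead invoke the first relation directly for $s\in S_Q^-(\varepsilon)$, noting why the proof of Proposition \ref{5.5}(1) for $P$ remains valid on this larger set.
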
  
\begin{proof}
Let $\lambda$ be an element of $(V^*)^H$. We put 
$\overline{\lambda}=r_{M\cap Q}\left(r_P(\lambda)\right)$. 
It is regarded as an $L\cap H$-invariant linear 
form on $V_Q$.  Let $S_L$ be the 
$(\sigma,F)$-split 
component of $Q$. By Proposition \ref{5.5} (1), 
we can choose a positive 
real number 
$\varepsilon\leqq 1$ for each $v\in V$ 
such that both 
$$
\langle\lambda,\pi(s)v\rangle
=
\delta_P^{1/2}(s)
\langle r_P(\lambda),\pi_P(s)j_P(v)\rangle
$$
and 
\begin{eqnarray*}
& &\langle r_P(\lambda),\pi_P(s)j_P(v)\rangle \\
& &=
\delta_{M\cap Q}^{1/2}(s)
\langle r_{M\cap Q}\left(r_P(\lambda)\right),
\pi_{M\cap Q}(s)j_{M\cap Q}\left(j_P(v)
\right)\rangle
\end{eqnarray*}
hold for any 
$s\in S_L^-(\varepsilon)$. 
Identifying $j_{M\cap Q}\left(j_P(v)\right)$ with $j_Q(v)$, we have
\begin{eqnarray*}
\langle\lambda,\pi(s)v\rangle
&=&
\delta_P^{1/2}(s)\delta_{M\cap Q}^{1/2}(s)
\langle \overline{\lambda},\pi_{Q}(s)j_Q(v)\rangle \\
&=&
\delta_{Q}^{1/2}(s)
\langle \overline{\lambda},\pi_Q(s)j_Q(v)\rangle 
\end{eqnarray*}
for all 
$s\in S_L^-(\varepsilon)$. 
This means that $\overline{\lambda}$ satisfies the 
unique property that $r_Q(\lambda)$ must have in 
(2) of Proposition \ref{5.5}. 
\end{proof}

\section{Characterization of relative cuspidality}\label{S6}

We shall give a characterization of 
relative cuspidality in terms of Jacquet modules 
along $\sigma$-split parabolic subgroups. 

\subsection{} \label{6.1} 
Let $(\pi,V)$ be an $H$-distinguished 
admissible 
representation of $G$. For a given non-zero 
$H$-invariant linear form $\lambda\in(V^*)^H$ on 
$V$, we have defined the $(H,\lambda)$-matrix 
coefficient $\varphi_{\lambda,v}$ ($v\in V$) by 
$$
\varphi_{\lambda,v}(g)=\langle\lambda,\pi(g^{-1})v
\rangle. 
$$
Recall that $(\pi,V)$ is said to be 
$(H,\lambda)$-relatively cuspidal if the 
support of 
$\varphi_{\lambda,v}$ is compact modulo $ZH$ for 
all $v\in V$. 

Let $\mathbf{Z}_{\sigma}^-$ denote 
the $(\sigma,F)$-split 
component of $\mathbf{Z}$. We have 
an almost direct product decomposition 
$\mathbf{Z}=\mathbf{Z}_{\sigma}^-\cdot 
(\mathbf{Z}\cap\mathbf{H})$. 
This implies that $Z/Z_{\sigma}^-(Z\cap H)$ is 
finite. Hence $(\pi,V)$ is 
$(H,\lambda)$-relatively cuspidal if and only if 
the support of $\varphi_{\lambda,v}$ 
is compact modulo $Z_{\sigma}^-H$ for 
all $v\in V$. 

A large part of this section is 
devoted to the proof of the following theorem. 

\begin{Theorem} \label{6.2} 
Let $(\pi,V)$ be an $H$-distinguished 
admissible representation 
of $G$ and $\lambda$ a non-zero 
$H$-invariant linear form on $V$. 
Then, $(\pi,V)$ is $(H,\lambda)$-relatively 
cuspidal if and only if $r_P(\lambda)=0$ for any proper 
$\sigma$-split parabolic subgroup $P$ 
of $G$. 
\end{Theorem}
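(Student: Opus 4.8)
The proof splits into two implications. For the "only if" direction, suppose $(\pi,V)$ is $(H,\lambda)$-relatively cuspidal; I want to show $r_P(\lambda)=0$ for every proper $\sigma$-split parabolic $P=M\ltimes U$ with $(\sigma,F)$-split component $S$. The key is Proposition \ref{5.5}(1): for each $v\in V$ there is $\varepsilon\leq 1$ with $\langle\lambda,\pi(s)v\rangle=\delta_P^{1/2}(s)\langle r_P(\lambda),\pi_P(s)j_P(v)\rangle$ for all $s\in S^-(\varepsilon)$. The left side is $\varphi_{\lambda,v}(s^{-1})\delta_P^{1/2}(s)$ up to notation; relative cuspidality forces $\varphi_{\lambda,v}$ to be supported in a set compact modulo $Z_\sigma^- H$, and since $P$ is proper and $\sigma$-split, $S^-(\varepsilon)$ runs off to infinity in directions transverse to $Z_\sigma^-$ (indeed $S\cap Z=Z_\sigma^-$ up to isogeny and the extra coordinates of $S$ are genuinely noncentral). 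So for $s$ deep enough in $S^-(\varepsilon)$, $s^{-1}$ leaves the support and $\langle\lambda,\pi(s)v\rangle=0$; then $\langle r_P(\lambda),\pi_P(s)j_P(v)\rangle=0$ for such $s$. Finally $\pi_P$ is an admissible representation of $M$ on which $S$ acts through a finite set of quasicharacters on each $M_K$-fixed subspace (Casselman \cite{C} 2.1.9), so $\pi_P(s)j_P(v)$ spanning over these $s$ and a fixed vector already forces $\langle r_P(\lambda),\overline{v}\rangle=0$ for all $\overline{v}\in V_P$; hence $r_P(\lambda)=0$.

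**The converse direction** is where the real work lies, and it uses the Cartan-type decomposition. Assume $r_P(\lambda)=0$ for every proper $\sigma$-split parabolic $P$. By Corollary \ref{3.4} there is a compact $\Omega\subset G$ and a finite $\Gamma\subset(\mathbf{M}_0\mathbf{H})(F)$ with $G=\Omega S_0^+\Gamma H$. Since $\varphi_{\lambda,v}$ is left-$\Omega$-uniformly-smooth and right-$H$-invariant, it suffices to bound the support of $s\mapsto \varphi_{\lambda,\pi(\omega)v}(s\gamma)$ for $s\in S_0^+$, $\omega\in\Omega$ (a compact set of translates), $\gamma\in\Gamma$ (finitely many), i.e. to show $\langle\lambda,\pi(\gamma^{-1}s^{-1})v'\rangle=0$ once $s\in S_0^+$ is deep enough, for all $v'$ in a fixed finite-dimensional space $V^K$. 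Write $s\in S_0^+$: it lies in $S_I^-(\varepsilon)$ for the standard $\sigma$-split $I=I(s)$ consisting of those $\alpha\in\Delta$ with $|s^\alpha|_F$ bounded; for $s$ "large" $I(s)$ is a proper subset. Then invoke Proposition \ref{5.5}(1) for the parabolic $P_I$: $\langle\lambda,\pi(s)v'\rangle=\delta_{P_I}^{1/2}(s)\langle r_{P_I}(\lambda),\pi_{P_I}(s)j_{P_I}(v')\rangle$, and $r_{P_I}(\lambda)=0$ by hypothesis since $P_I$ is proper. So $\langle\lambda,\pi(s)v'\rangle=0$. The subtlety is that the $\gamma$'s must be absorbed — but $\gamma\in(\mathbf{M}_0\mathbf{H})(F)$, so $\gamma=m_\gamma h_\gamma$ with $m_\gamma\in M_0$ centralizing $S_0$, and $\langle\lambda,\pi(\gamma^{-1}s^{-1})v'\rangle=\langle\lambda,\pi(h_\gamma^{-1}m_\gamma^{-1}s^{-1})v'\rangle=\langle\lambda,\pi(m_\gamma^{-1})\pi(s^{-1})v'\rangle$ by $H$-invariance, and $\pi(m_\gamma^{-1})v'$ ranges over another fixed finite-dimensional space as $\gamma,v'$ vary; replacing $v'$ by this, the previous paragraph applies. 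One must be careful that the "$\varepsilon$" in Proposition \ref{5.5}(1) can be chosen uniformly over the finite set of proper $\sigma$-split standard $I$ and the fixed finite-dimensional space of vectors — this is fine since there are finitely many such $I$ and each $V^K$ is finite-dimensional.

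**The main obstacle** is the combinatorial bookkeeping in the converse: making precise that "$s\in S_0^+$ large" means $s$ lies deep in $S_I^-(\varepsilon)$ for a \emph{proper} $\sigma$-split $I$, uniformly, so that the vanishing $r_{P_I}(\lambda)=0$ can be applied with a single $\varepsilon$ working for all relevant $I$ and all vectors in the relevant finite-dimensional space. Concretely: the complement of a neighborhood of $Z_\sigma^-$ inside $S_0^+$ is covered by finitely many sets $S_0^+\cap S_I^-(\varepsilon)$ with $I\subsetneq\Delta$ $\sigma$-split, for suitable fixed $\varepsilon$; establishing this covering is the crux. Once it is in hand, Proposition \ref{5.5}(1) together with the hypothesis kills $\langle\lambda,\pi(s)v'\rangle$ on each piece, and Corollary \ref{3.4} propagates this to all of $G$, giving compact support modulo $Z_\sigma^- H$, hence modulo $ZH$ by \ref{6.1}. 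I would present the converse first in the "generic" form (ignoring $\Gamma$ and $\Omega$), then note that $\Omega$ contributes only uniform local constancy and $\Gamma$ is handled by the $H$-invariance trick above, absorbing its $M_0$-part into the vector.
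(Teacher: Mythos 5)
The \emph{only if} direction is essentially the paper's argument (your appeal to Casselman 2.1.9 to pass from vanishing of $\langle r_P(\lambda),\pi_P(s)j_P(v)\rangle$ on $S^-(\varepsilon)$ to vanishing on $(V_P)^{M_K}$ is a bit heavier than needed — a single $s_0\in S^-(\varepsilon'')$, acting invertibly on $(V_P)^{M_K}$, already does it — but it works). You do gloss over the one non-formal point, though: boundedness of the support modulo $Z_\sigma^-H$ does not immediately give boundedness of $s$ modulo $Z_\sigma^-$ when $s^{-1}\in Z_\sigma^-CH$, because you are cutting with a set that is huge in the $H$-directions. The paper makes this precise by writing $s^{-1}=zch$ and applying $\sigma$ to $h$ to get $(z^{-1}s^{-1})^2=c\sigma(c)^{-1}$, which is compactly constrained; that is the step your "transverse to $Z_\sigma^-$'' remark is standing in for, and it should be spelled out.

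The \emph{if} direction has a genuine gap in the way you absorb $\Gamma$. You write $\gamma=m_\gamma h_\gamma$ with $m_\gamma\in M_0$, $h_\gamma\in H$, and then use $H$-invariance of $\lambda$ to move $h_\gamma$ away. But Corollary \ref{3.4} delivers $\Gamma\subset(\mathbf{M}_0\mathbf{H})(F)$, \emph{not} $\Gamma\subset M_0H$, and these differ precisely when the relevant Galois cohomology of $\mathbf{M}_0\cap\mathbf{H}$ is nontrivial (this is the whole point of the hypothesis in Lemma \ref{2.5}(2)). For such a $\gamma$ one simply cannot write it as $m_\gamma h_\gamma$ with $F$-points, and $\lambda$ has no invariance to offer against a non-$F$-rational $h_\gamma$. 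The paper circumvents this by never factoring $\gamma$ at all: it passes to the translated form $\pi^*(\gamma)\lambda$, which is invariant under the conjugate group $H_\gamma=\gamma H\gamma^{-1}$ (equivalently, is $\sigma_\gamma$-fixed for the twisted involution $\sigma_\gamma={\rm Int}(m_\gamma)\circ\sigma$), applies Proposition \ref{5.5}(1) in that twisted setting, and then uses Lemma \ref{6.6} to identify $r_P(\pi^*(\gamma)\lambda)$ with $r_{\gamma^{-1}P\gamma}(\lambda)$, which the hypothesis kills. Without this twist argument (or some substitute for it), your converse only proves the theorem under the extra assumption $(\mathbf{M}_0\mathbf{H})(F)=M_0H$. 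The rest of your converse — the covering of the large part of $S_0^+$ by finitely many $S_{I_\alpha}^\pm$ cones, the uniformity of $\varepsilon$ over the finite set of maximal proper $\sigma$-split $I_\alpha$ and the finite-dimensional $V^K$, and the conclusion via Corollary \ref{3.4} — matches the paper's structure and is fine once the $\gamma$-issue is repaired.
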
 

\indent

First we shall prove the {\it only if} part. 
The proof of the {\it if} part will 
be given in \ref{6.8} after several preparatory lemmas. 

\subsection{\it Proof of the only if part} \label{6.3}
Let $P=M\ltimes U$ 
be a proper $\sigma$-split 
parabolic subgroup of $G$ with 
the $(\sigma,F)$-split component $S$. 
We regard $P$ as a standard one 
$P_I$ with respect to a 
suitable choice of 
$(\mathbf{S}_0,\mathbf{A}_{\emptyset},\Delta)$ 
and a $\sigma$-split subset $I\subset\Delta$ as in 
\ref{2.3}. 
Assume that $(\pi,V)$ is $(H,\lambda)$-relatively 
cuspidal. 
Let $K=K_n$ be any member of the 
family $\{K_n\}$ of \ref{4.3} adapted to 
$(\mathbf{S}_0,\mathbf{A}_{\emptyset},\Delta)$. 
Since $V^K$ is finite dimensional, 
we can take a compact subset $C$ of $G$ so that 
the support of $\varphi_{\lambda,v}$ is contained in 
$Z_{\sigma}^-CH$ 
for all $v\in V^K$. Here let us observe that 
$Z_{\sigma}^-CH\cap S^+(1)$ is contained in some 
subset of $S^+(1)$ which is compact modulo 
$Z_{\sigma}^-$: For a given $s\in Z_{\sigma}^-CH\cap S^+(1)$, 
write $s=zch$ with $z\in Z_{\sigma}^-$, $c\in C$ and $h\in H$. 
Since $h=c^{-1}z^{-1}s$ is fixed by $\sigma$, we have 
$$
\sigma(c)^{-1}zs^{-1}=c^{-1}z^{-1}s, 
$$
thus 
$$
(z^{-1}s)^2=c\sigma(c)^{-1}. 
$$
This shows that $(z^{-1}s)^2$, hence also $z^{-1}s$, 
stays in a compact subset of $S^+(1)$. 
Now we may choose a positive real number 
$\varepsilon <1$ such that 
$$
Z_{\sigma}^-CH\cap S^+(1)\subset 
\left\{ s\in S\bigm| 1\leqq |s^{\alpha}|_F
<\varepsilon^{-1}\,
( \alpha\in\Delta\setminus I)\right\}.
$$
Since $s\in S^-(\varepsilon)$ implies that 
$s^{-1}\notin Z_{\sigma}^-CH\cap S^+(1)$, 
we have 
$$
\langle\lambda,\pi(s)v\rangle
=\varphi_{\lambda,v}(s^{-1})=0
$$
for all $s\in S^-(\varepsilon)$ and $v\in V^K$. 
On the other hand, by (1) of 
Proposition \ref{5.5}, we may choose 
$\varepsilon'$ such that the relation 
$$
\langle\lambda,\pi(s)v\rangle
=\delta_P(s)^{1/2}\langle r_P(\lambda),
\pi_P(s)j_P(v)\rangle
$$
holds for any $s\in S^-(\varepsilon')$ and 
$v\in V^K$. Putting 
$\varepsilon''=\min(
\varepsilon,\varepsilon')$, we have 
$$
\langle r_P(\lambda),
\pi_P(s)j_P(v)\rangle=0
$$
for all $s\in S^-(\varepsilon'')$ and 
$v\in V^K$. This shows that 
$r_P(\lambda)$ vanishes on 
$j_P\left(V^K\right)=(V_P)^{M_K}$. 
Letting $K$ vary in the adapted family 
$\{K_n\}$, we conclude that $r_P(\lambda)=0$ on 
$V_P$. \hfill$\qed$

\subsection{} \label{6.4}
Now we turn to the converse direction. 
Let $\lambda$ be a non-zero $H$-invariant 
linear form on an 
$H$-distinguished admissible 
representation $(\pi,V)$ of $G$. 
Assuming that $r_P(\lambda)=0$ for 
proper $\sigma$-split parabolic subgroups 
$P$ of $G$, we shall investigate 
the support 
of $(H,\lambda)$-matrix 
coefficients of $\pi$. 
First we observe a relation between 
$r_P$ and $r_Q$ when $P$ and 
$Q$ are conjugate. 

\subsection{} \label{6.5}
Fix a data $(\mathbf{S}_0, 
\mathbf{A}_{\emptyset}, \Delta)$ 
as in \ref{2.1} and let 
$\mathbf{P}_0=\mathbf{M}_0\ltimes
\mathbf{U}_0$ be the corresponding 
minimal $\sigma$-split parabolic $F$-subgroup as 
in \ref{2.4}. 
For an element 
$\gamma\in(\mathbf{M}_0\mathbf{H})(F)$, 
we put $m_{\gamma}=
\gamma\sigma(\gamma)^{-1}\in M_0$. Define the 
$F$-involution $\sigma_{\gamma}$ on 
$\mathbf{G}$ by 
$$
\sigma_{\gamma}(g)=m_{\gamma}\sigma(g)m_{\gamma}^{-1}
=\gamma\sigma(\gamma^{-1}g\gamma)\gamma^{-1}. 
$$
The 
$\sigma_{\gamma}$-fixed point subgroup 
in $G$ coincides with 
$\gamma H\gamma^{-1}$, which is denoted by
$H_{\gamma}$. Note that $\mathbf{S}_0$ is also 
a maximal 
$(\sigma_{\gamma}, F)$-split torus. Moreover, any 
$\sigma$-split parabolic subgroup $\mathbf{P}$ 
standard with respect to 
$(\mathbf{S}_0, 
\mathbf{A}_{\emptyset}, \Delta)$ is also 
$\sigma_{\gamma}$-split. The $(\sigma,F)$-split
component and the $(\sigma_{\gamma},F)$-split component 
coincide for such a parabolic subgroup $\mathbf{P}$. 
Now we can define the mapping 
\begin{equation}
r_P:(V^*)^{H_{\gamma}}\to 
(V_P^*)^{M\cap H_{\gamma}} \label{r_P}
\end{equation}
for an $H_{\gamma}$-distinguished representation 
$(\pi,V)$ 
by changing $\sigma$ to $\sigma_{\gamma}$ and 
$H$ to $H_{\gamma}$. 

On the other hand, for $\gamma$ and 
$\mathbf{P}$ as above, put 
$\mathbf{Q}=
\gamma^{-1}\mathbf{P}\gamma$. It is 
$\sigma$-split, but possibly non-standard. 
If $\mathbf{S}$ denotes the $(\sigma,F)$-split component of 
$\mathbf{P}$, then the conjugate 
$\gamma^{-1}\mathbf{S}\gamma$ gives that of 
$\mathbf Q$. We can define the mapping 
\begin{equation}
r_{Q}:(V^*)^H\to 
(V_{Q}^*)^{\gamma^{-1}M\gamma\cap H} \label{r_Q}
\end{equation}
for an $H$-distinguished representation $(\pi,V)$. 
Two mappings \eqref{r_P} and \eqref{r_Q} are related by 
the isomorphism 
$\pi^*(\gamma): (V^*)^H\to 
(V^*)^{H_{\gamma}}$ as follows. 

\begin{Lemma} \label{6.6}
For $\gamma$, $P$ and 
$Q$ as above, the relation 
$$
\langle r_P
\left(\pi^*(\gamma)\lambda\right),j_P(v)\rangle
=\langle r_{Q}
(\lambda),j_{Q}
(\pi(\gamma^{-1})v)\rangle
$$
holds for every $\lambda\in\left(V^*\right)^H$ and 
$v\in V$. 
\end{Lemma}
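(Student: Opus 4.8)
The plan is to verify directly that the linear form $\overline{\mu}$ on $V_P$ defined by $\langle \overline{\mu}, j_P(v)\rangle = \langle r_Q(\lambda), j_Q(\pi(\gamma^{-1})v)\rangle$ satisfies the unique characterizing property of $r_P(\pi^*(\gamma)\lambda)$ given in Proposition \ref{5.5} (2), applied with the involution $\sigma_\gamma$ and fixed-point subgroup $H_\gamma$ in place of $\sigma$ and $H$. (One must first check $\overline{\mu}$ is well-defined, i.e. vanishes on $V(U)$; this is immediate since $j_Q(\pi(\gamma^{-1})v)$ only depends on $j_P(v)$ because $\gamma^{-1}U\gamma$ is the unipotent radical of $Q$, and $r_Q(\lambda)$ kills $V_Q(\gamma^{-1}U\gamma)$.) Concretely, fixing $v \in V$, I want to produce $\varepsilon \leqq 1$ such that
$$
\langle \pi^*(\gamma)\lambda, \pi(s)v\rangle = \delta_P(s)^{1/2}\langle \overline{\mu}, \pi_P(s) j_P(v)\rangle
$$
for all $s \in S^-(\varepsilon)$, where $S$ is the common $(\sigma,F)$- and $(\sigma_\gamma,F)$-split component of $P$.

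The key computation rewrites the left side: $\langle \pi^*(\gamma)\lambda, \pi(s)v\rangle = \langle \lambda, \pi(\gamma^{-1})\pi(s)v\rangle = \langle \lambda, \pi(\gamma^{-1}s\gamma)\pi(\gamma^{-1}v)\rangle$. Now $\gamma^{-1}s\gamma$ lies in $\gamma^{-1}S\gamma$, which is the $(\sigma,F)$-split component of $Q = \gamma^{-1}P\gamma$, and as $s$ ranges over $S^-(\varepsilon)$ the element $\gamma^{-1}s\gamma$ ranges over the corresponding negative cone of $Q$ (the conjugation $\gamma^{-1}(\cdot)\gamma$ is a compatible isomorphism of the split components sending $S^-(\varepsilon)$ onto $(\gamma^{-1}S\gamma)^-(\varepsilon)$, since $\gamma \in M_0$-coset moves neither the torus action on $U$ nor the relevant root data in a way that changes the cone — this is the point recorded in \ref{6.5} that $\mathbf{S}_0$ is also maximal $(\sigma_\gamma,F)$-split and standard $\sigma$-split parabolics are $\sigma_\gamma$-split). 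Applying Proposition \ref{5.5} (1) to the vector $\pi(\gamma^{-1})v \in V$, the representation $\pi$, and the $\sigma$-split parabolic $Q$, there is $\varepsilon$ with
$$
\langle \lambda, \pi(\gamma^{-1}s\gamma)\pi(\gamma^{-1})v\rangle = \delta_Q(\gamma^{-1}s\gamma)^{1/2}\langle r_Q(\lambda), \pi_Q(\gamma^{-1}s\gamma) j_Q(\pi(\gamma^{-1})v)\rangle
$$
for $s \in S^-(\varepsilon)$. Finally I identify $\delta_Q(\gamma^{-1}s\gamma) = \delta_P(s)$ (conjugation by $\gamma$ carries the Levi/unipotent data of $P$ to that of $Q$, preserving root multiplicities) and, using the definition of $\pi_Q$ and the isomorphism $V_P \cong {}^{\gamma^{-1}}V_Q$ under which $\pi_P(s)j_P(v)$ corresponds to $\pi_Q(\gamma^{-1}s\gamma)j_Q(\pi(\gamma^{-1})v)$ up to the matching modulus normalizations, I get exactly $\delta_P(s)^{1/2}\langle \overline{\mu}, \pi_P(s)j_P(v)\rangle$ on the right. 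Then Proposition \ref{5.5} (2) forces $\overline{\mu} = r_P(\pi^*(\gamma)\lambda)$, which is the claim.

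The main obstacle I anticipate is bookkeeping around the two different involutions: Proposition \ref{5.5} was proved for $(\sigma, H)$, and here I am applying its part (1) with $(\sigma, H)$ and $Q$ on the $V$-side but invoking its part (2) with $(\sigma_\gamma, H_\gamma)$ and $P$ on the $V_P$-side. I need to be careful that $r_Q$ in \eqref{r_Q} really is the Casselman-type form for $(\sigma, H)$ while $r_P$ in \eqref{r_P} is for $(\sigma_\gamma, H_\gamma)$, that both are legitimately covered by Proposition \ref{5.5} (the statement is phrased for $H$, but the construction only uses that $P$ is $\sigma$-split, so it applies verbatim with $\sigma_\gamma$, $H_\gamma$ since standard $\sigma$-split parabolics are $\sigma_\gamma$-split by \ref{6.5}), and that $\pi^*(\gamma)\lambda$ is indeed $H_\gamma$-invariant so the left-hand mapping $r_P$ can be applied to it — this last point is immediate because $H_\gamma = \gamma H \gamma^{-1}$ and $\langle \pi^*(\gamma)\lambda, \pi(\gamma h \gamma^{-1})v\rangle = \langle \lambda, \pi(h)\pi(\gamma^{-1})v\rangle = \langle \pi^*(\gamma)\lambda, v\rangle$. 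Beyond that the argument is a formal transport of structure; no hard analysis is needed once Proposition \ref{5.5} is in hand.
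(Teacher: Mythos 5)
Your argument is correct and is essentially the paper's own proof: define the transported form by $\langle\overline{\mu},j_P(v)\rangle=\langle r_Q(\lambda),j_Q(\pi(\gamma^{-1})v)\rangle$, apply Proposition \ref{5.5}~(1) to $Q$ for $(\sigma,H)$ on one side and Proposition \ref{5.5}~(2) to $P$ for $(\sigma_\gamma,H_\gamma)$ on the other, and match the two via conjugation by $\gamma$. The only organizational difference is that the paper unfolds $\delta_P^{1/2}(s)\langle\overline{\lambda},\pi_P(s)j_P(v)\rangle$ back through the unnormalized projections $j_P$ and $j_Q$, so the two modulus factors are absorbed without ever asserting $\delta_Q(\gamma^{-1}s\gamma)=\delta_P(s)$; your route works from $\langle\pi^*(\gamma)\lambda,\pi(s)v\rangle$ forward and does explicitly invoke this (true, but extra) identification.
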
 
\begin{proof}
The mapping $\pi(\gamma^{-1}):V\to V$ 
sends $V(U)$ isomorphically onto 
$V(\gamma^{-1}U\gamma)$, hence induces 
an isomorphism 
$$
\overline{\pi(\gamma^{-1})}: V_P=V/V(U) 
\to V/V(\gamma^{-1}U\gamma)=V_{Q} 
$$
with the relation 
$$
\overline{\pi(\gamma^{-1})}\left(j_P(v)\right)
=j_{Q}\left(
\pi(\gamma^{-1})v\right). 
$$
Under this notation, the right hand side of the lemma 
is written as 
$\langle r_{Q}
(\lambda),\overline{\pi(\gamma^{-1})}
\left(j_P(v)\right)\rangle$. Set 
$\overline{\lambda}=r_{Q}
(\lambda)\circ\overline{\pi(\gamma^{-1})}$. 
We show that $\overline{\lambda}$ satisfies the 
unique property in (2) of Proposition \ref{5.5} 
that $r_P
\left(\pi^*(\gamma)\lambda\right)$ must have. 
For $s$ in the $(\sigma,F)$-split component 
$S$ of $P$, we have 
\begin{eqnarray*} 
& &\delta_P^{1/2}(s)\langle
\overline{\lambda},\pi_P(s)j_P(v)\rangle
=
\langle r_{Q}(\lambda), 
\overline{\pi(\gamma^{-1})}\left(
\delta_P^{1/2}(s)\pi_P(s)j_P(v)\right)\rangle \\ 
& &=\langle r_{Q}(\lambda),
\overline{\pi(\gamma^{-1})}\left(
j_P(\pi(s)v)\right)
=\langle r_{Q}(\lambda),
j_{Q}\left(
\pi(\gamma^{-1})\pi(s)v)\right)\rangle \\
& &=\langle r_{Q}(\lambda),
\delta_{Q}^{1/2}(\gamma^{-1}s\gamma)
\pi_{Q}(\gamma^{-1}s\gamma)
j_{Q}\left(
\pi(\gamma^{-1})v)\right)\rangle. 
\end{eqnarray*}
By Proposition \ref{5.5} (1) applied to $Q$, there exists 
a positive real 
number $\varepsilon\leqq 1$ such that 
the last quantity is equal to 
$$
\langle\lambda,\pi(\gamma^{-1}s\gamma)\pi(\gamma^{-1})v
\rangle
=\langle\pi^*(\gamma)\lambda,\pi(s)v\rangle
$$
for all $s\in S^-(\varepsilon)$. This shows the 
claim. 
\end{proof}

Next we shall see what happens if 
$r_P(\lambda)$ vanishes for a single $P$. 

\begin{Lemma} \label{6.7}
Fix a data $(\mathbf{S}_0, 
\mathbf{A}_{\emptyset}, \Delta)$ and let 
$I\subset\Delta$ be a $\sigma$-split subset, 
$\gamma$ an element of 
$(\mathbf{M}_0\mathbf{H})(F)$ and $\Omega$ 
a compact subset of $G$. 
Let $\lambda$ be a 
non-zero $H$-invariant linear form on 
an admissible representation $(\pi,V)$ of $G$. 
Suppose that 
$r_{\gamma^{-1}P_I\gamma}(\lambda)=0$. 
Then for each $v\in V$, there exists a
positive real number 
$\varepsilon=\varepsilon_{I, \gamma}\leqq 1$ such that 
$\varphi_{\lambda,v}$ 
vanishes 
identically on $\Omega S_I^+(\varepsilon)\gamma H$. 
\end{Lemma}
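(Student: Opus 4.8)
The plan is to reduce the statement about the conjugate parabolic $\gamma^{-1}P_I\gamma$ to a statement about the standard parabolic $P_I$ together with the twisted involution $\sigma_\gamma$, and then to run the same Cartan-style argument that was used for the \emph{only if} part of Theorem \ref{6.2} in \ref{6.3}. First I would set $\mathbf{P}=\mathbf{P}_I$, $\mathbf{Q}=\gamma^{-1}\mathbf{P}\gamma$, and observe, as in \ref{6.5}, that $\mathbf{P}$ is $\sigma_\gamma$-split with the same $(\sigma_\gamma,F)$-split component $\mathbf{S}=\mathbf{S}_I$ as its $(\sigma,F)$-split component, and that $\gamma H\gamma^{-1}=H_\gamma$ is the $\sigma_\gamma$-fixed point subgroup. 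I would feed $\lambda$ through the isomorphism $\pi^*(\gamma):(V^*)^H\to(V^*)^{H_\gamma}$, getting $\mu:=\pi^*(\gamma)\lambda$, and then apply Lemma \ref{6.6}, which says $\langle r_P(\mu),j_P(v)\rangle=\langle r_Q(\lambda),j_Q(\pi(\gamma^{-1})v)\rangle$; since $\pi(\gamma^{-1})$ descends to an \emph{isomorphism} $V_P\to V_Q$, the hypothesis $r_Q(\lambda)=0$ is equivalent to $r_P(\mu)=0$, where now $r_P$ is the map built from $\sigma_\gamma$ and $H_\gamma$.

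Next I would invoke Proposition \ref{5.5} (1), stated for a general $H$-invariant linear form, applied in the $\sigma_\gamma$-setting to the vector $\pi(\gamma^{-1})v$: for each $v\in V$ there is $\varepsilon'\leqq 1$ such that
$$
\langle\mu,\pi(s)\pi(\gamma^{-1})v\rangle
=\delta_P^{1/2}(s)\langle r_P(\mu),\pi_P(s)j_P(\pi(\gamma^{-1})v)\rangle
$$
for all $s\in S_I^-(\varepsilon')$. Since $r_P(\mu)=0$, the right side vanishes, so $\langle\mu,\pi(s)\pi(\gamma^{-1})v\rangle=0$ for $s\in S_I^-(\varepsilon')$. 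Unwinding, $\langle\lambda,\pi(\gamma^{-1}s)(\text{something})\rangle$ — more precisely $\langle\mu,\pi(s)\pi(\gamma^{-1})v\rangle=\langle\pi^*(\gamma)\lambda,\pi(s)\pi(\gamma^{-1})v\rangle=\langle\lambda,\pi(\gamma s\gamma^{-1})\pi(\gamma^{-1})v\rangle$; but here I must be careful that $s$ commutes with $\gamma$ only up to the correction term $m_\gamma$, so the cleanest route is to keep everything inside the $\sigma_\gamma$-picture and only translate back at the very end. In any case this yields $\varphi_{\lambda,v}(\gamma s^{-1}\gamma^{-1}\cdot\gamma)=0$ type relations, i.e. the vanishing of $\varphi_{\lambda,v}$ on a translate $\gamma S_I^+(\varepsilon')$ of the negative cone.

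To get the full set $\Omega S_I^+(\varepsilon)\gamma H$ I would use the compact set $\Omega$ to shrink $\varepsilon'$ uniformly. Writing $\varphi_{\lambda,v}(\omega s\gamma h)=\langle\lambda,\pi(h^{-1})\pi((\omega s\gamma)^{-1})v\rangle=\langle\lambda,\pi(\gamma^{-1}s^{-1}\omega^{-1})v\rangle$ by $H$-invariance, and noting $\omega^{-1}$ ranges over a compact set $\Omega^{-1}$, I would choose $K=K_n$ from the adapted family with $\pi(\Omega^{-1})V^{(0)}\subset V^K$ for a fixed finitely many vectors — more honestly, since I only need the conclusion for a single given $v$, I fix $v$, take the finite-dimensional space $W=\mathrm{span}\,\pi(\Omega^{-1})v$ (finite-dimensional because $v$ is smooth and $\Omega$ compact, so $\pi(\Omega^{-1})v$ lies in some $V^K$), and apply the previous paragraph simultaneously to all vectors in $W$, which by finite-dimensionality and Proposition \ref{5.5} (1) can be done with a single $\varepsilon=\varepsilon_{I,\gamma}\leqq 1$. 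Then $\varphi_{\lambda,v}(\omega s\gamma h)=\langle\mu,\pi(s^{-1})(\pi(\gamma^{-1})\pi(\omega^{-1})v)\rangle$ — wait, the exponent of $s$ must be checked, but $\omega s\gamma \in\Omega S_I^+(\varepsilon)\gamma$ means $s\in S_I^+(\varepsilon)$ so $s^{-1}\in S_I^-(\varepsilon)$, which is exactly the regime where the vanishing holds — so the expression is $0$. The main obstacle I anticipate is the bookkeeping of conjugation by $\gamma$ versus the cone elements $s$: $\gamma$ does not centralize $\mathbf{S}_I$, only normalizes $\mathbf{S}_0$ up to $\mathbf{M}_0$, so one has to be disciplined about working in the $\sigma_\gamma$-twisted picture throughout and only transporting via Lemma \ref{6.6} at the level of the $r_P$-maps, rather than naively moving $\gamma$ past $s$ inside matrix coefficients.
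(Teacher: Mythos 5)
Your overall strategy is the same as the paper's: transport $\lambda$ to $\mu=\pi^*(\gamma)\lambda\in(V^*)^{H_\gamma}$, use Lemma~\ref{6.6} to relate $r_P(\mu)$ to $r_{\gamma^{-1}P_I\gamma}(\lambda)$, apply Proposition~\ref{5.5}~(1) in the $\sigma_\gamma$-picture, and handle the compact set $\Omega$ by the finite-dimensionality of $\pi(\Omega^{-1})v$. That is exactly the paper's route. However, there is a concrete bookkeeping error that would spoil the conclusion. You apply Proposition~\ref{5.5}~(1) to the vector $\pi(\gamma^{-1})v$ (and later to $\pi(\gamma^{-1})\pi(\omega^{-1})v$), which inserts an extra $\pi(\gamma^{-1})$. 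What this actually controls is
$$
\langle\mu,\pi(s)\pi(\gamma^{-1})v\rangle
=\langle\lambda,\pi(\gamma^{-1})\pi(s)\pi(\gamma^{-1})v\rangle
=\varphi_{\lambda,v}(\gamma s^{-1}\gamma),
$$
so you would obtain vanishing on $\Omega\,\gamma\,S_I^+(\varepsilon)\,\gamma\,H$, not on $\Omega\,S_I^+(\varepsilon)\,\gamma\,H$. Since $\gamma\in(\mathbf{M}_0\mathbf{H})(F)$ does not normalize $\mathbf{S}_I$ in general, $\gamma S_I^+(\varepsilon)\gamma$ is genuinely a different set. Your own interim claim that you get vanishing on \emph{``a translate $\gamma S_I^+(\varepsilon')$''} is already off, since the set produced is $\gamma S_I^+(\varepsilon')\gamma$.

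The fix is small and makes the worry about commuting $\gamma$ past $s$ evaporate entirely: with $\omega\in\Omega$, $s\in S_I^+(\varepsilon)$, $h\in H$, just invert,
$$
\varphi_{\lambda,v}(\omega s\gamma h)
=\langle\lambda,\pi(h^{-1}\gamma^{-1}s^{-1}\omega^{-1})v\rangle
=\langle\lambda,\pi(\gamma^{-1})\pi(s^{-1})\pi(\omega^{-1})v\rangle
=\langle\mu,\pi(s^{-1})\pi(\omega^{-1})v\rangle,
$$
and then apply Proposition~\ref{5.5}~(1) in the $\sigma_\gamma$-setting to $\mu$ and the vector $\pi(\omega^{-1})v$ (no $\pi(\gamma^{-1})$ in front). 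Finite-dimensionality of $\pi(\Omega^{-1})v$ gives a uniform $\varepsilon$, and Lemma~\ref{6.6} gives $r_P(\mu)=0$ from the hypothesis $r_{\gamma^{-1}P_I\gamma}(\lambda)=0$, yielding the stated conclusion. At no point does one need to move $\gamma$ past $s$; the $\gamma$ is entirely absorbed into the passage $\lambda\mapsto\mu$ and the single use of Lemma~\ref{6.6}.
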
 
\begin{proof} 
We abbreviate $P_I$ and $S_I^-(\varepsilon)$ respectively 
as 
$P$ and $S^-(\varepsilon)$. 
For $k\in\Omega$, $s\in S^-(\varepsilon)$ and 
$h\in H$, we have 
$$
\varphi_{\lambda,v}(ks^{-1}\gamma h)
=\langle\lambda,\pi(\gamma^{-1})\pi(s)\pi(k^{-1})v\rangle
=\langle\pi^*(\gamma)\lambda,\pi(s)\pi(k^{-1})v\rangle. 
$$
Note that $\pi(k^{-1})v$ stays in a finite dimensional subspace 
for any $k\in\Omega$. 
By applying Proposition \ref{5.5} (1) to 
the linear form $\pi^*(\gamma)(\lambda)\in (V^*)^{H_{\gamma}}$, we 
can take a positive real number 
$\varepsilon\leqq 1$ such that
\begin{eqnarray*} 
\langle\pi^*(\gamma)\lambda,\pi(s)\pi(k^{-1})v\rangle
&=&
\delta_{P}^{1/2}(s)
\langle
r_P(\pi^*(\gamma)\lambda),
\pi_{P}(s)j_{P}(\pi(k^{-1})v)\rangle \\
&=& \langle
r_{P}(\pi^*(\gamma)\lambda),
j_{P}(\pi(s)\pi(k^{-1})v)\rangle 
\end{eqnarray*}
for any $s\in S^-(\varepsilon)$ and $k\in\Omega$. 
By Lemma \ref{6.6}, the right hand side is equal to 
$$
\langle r_{\gamma^{-1}P\gamma}
(\lambda),j_{\gamma^{-1}P\gamma}
\left(\pi(\gamma^{-1})\pi(s)\pi(k^{-1})v
\right)\rangle,
$$
which is zero by assumption. 
\end{proof} 

Now we give the rest of the proof of \ref{6.2}.  

\subsection{\it Proof of the if part} \label{6.8}
Recall from Corollary \ref{3.4} that 
$G$ is decomposed as 
$$
G=\Omega S_0^+\Gamma H
$$
for a suitable compact subset 
$\Omega$ of $G$ and a finite 
subset $\Gamma$ of 
$(\mathbf{M}_0\mathbf{H})(F)$. 
Assume that $r_P(\lambda)=0$ for all proper 
$\sigma$-split parabolic subgroup $P$. 
For a given $v\in V$, let $\varepsilon$ be 
the minimum of 
$\varepsilon_{I,\gamma}$ in Lemma \ref{6.7} where 
$I$ runs over all proper 
$\sigma$-split subsets of $\Delta$ and 
$\gamma$ runs over $\Gamma$. Then, 
$\varphi_{\lambda,v}|_{\Omega s^{-1}\Gamma H}
\equiv 0$ if 
$s\in S_0^-(1)\cap S_I^-(\varepsilon)$ for some 
$\sigma$-split subset $I\subset\Delta$. 
In particular, 
$s\in S_0^-(1)$ cannot be in $S_{I_{\alpha}}^-(\varepsilon)$ 
for all $\alpha\in\Delta\setminus\Delta_{\sigma}$ 
if $\varphi_{\lambda,v}$ is not 
identically zero on $\Omega s^{-1}\Gamma H$. 
Here $I_{\alpha}$ is the maximal $\sigma$-split subset of $\Delta$ 
as in \ref{2.6}. Note that 
$s^{\alpha}=s^{\sigma^*(\alpha)}$ for all $s\in S_0$ if 
$\alpha\in\Delta\setminus\Delta_{\sigma}$. As a result, 
the support of 
$\varphi_{\lambda,v}$ is 
contained in the union of 
$\Omega s^{-1}\Gamma H$ 
where $s\in S_0^-(1)$ satisfies 
$\varepsilon<|s^{\alpha}|_F\leqq 1$ for all 
$\alpha\in\Delta\setminus\Delta_{\sigma}$. 
However, the set 
$\{ s\in S_0\bigm| \varepsilon<|s^{\alpha}|_F\leqq 1
\,(\alpha\in\Delta\setminus\Delta_{\sigma})\}$ is finite
modulo 
$Z_{\sigma}^-\cdot\mathbf{S}_0(\mathcal O_F)$. Hence the 
support of 
$\varphi_{\lambda,v}$ is compact 
modulo $Z_{\sigma}^-H$. 

This completes the proof of Theorem \ref{6.2}. 
\hfill$\qed$

\indent

Recall that $(\pi,V)$ is said to be $H$-relatively cuspidal 
if it is $(H,\lambda)$-relatively cuspidal for all 
$\lambda\in (V^*)^H$. Now we have obtained one of our 
main theorem. 

\begin{Theorem} \label{6.9}
An $H$-distinguished 
admissible representation $(\pi,V)$ 
of $G$ is
$H$-relatively cuspidal if and only if 
$r_P((V^*)^H)=0$ for any proper 
$\sigma$-split parabolic subgroup $P$ of $G$. 
\end{Theorem}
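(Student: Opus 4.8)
The plan is to deduce Theorem \ref{6.9} directly from Theorem \ref{6.2} (the $(H,\lambda)$-version already proved). The point is simply that $H$-relative cuspidality is defined as $(H,\lambda)$-relative cuspidality for \emph{every} $\lambda\in(V^*)^H$, and the condition $r_P((V^*)^H)=0$ means $r_P(\lambda)=0$ for every $\lambda\in(V^*)^H$. So the equivalence should follow by quantifying the statement of Theorem \ref{6.2} over all $\lambda$.

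First I would handle the trivial case: if $(V^*)^H=0$ there is nothing to prove (the representation is not even $H$-distinguished, but the statement is vacuously true on both sides), so assume $(V^*)^H\neq 0$. For the forward implication, suppose $(\pi,V)$ is $H$-relatively cuspidal. Then for each $\lambda\in(V^*)^H$ the representation is $(H,\lambda)$-relatively cuspidal, so by the ``only if'' part of Theorem \ref{6.2} (which needs $\lambda$ nonzero, but the case $\lambda=0$ gives $r_P(0)=0$ trivially by linearity, Proposition \ref{5.6}(2)) we get $r_P(\lambda)=0$ for every proper $\sigma$-split parabolic $P$. Hence $r_P((V^*)^H)=0$ for all such $P$.

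Conversely, suppose $r_P((V^*)^H)=0$ for every proper $\sigma$-split parabolic subgroup $P$ of $G$. Fix any $\lambda\in(V^*)^H$. If $\lambda=0$ then $\varphi_{0,v}\equiv 0$ for all $v$, so $(\pi,V)$ is trivially $(H,0)$-relatively cuspidal. If $\lambda\neq 0$, then for every proper $\sigma$-split $P$ we have $r_P(\lambda)\in r_P((V^*)^H)=0$, so by the ``if'' part of Theorem \ref{6.2} the representation $(\pi,V)$ is $(H,\lambda)$-relatively cuspidal. Since this holds for all $\lambda\in(V^*)^H$, by definition $(\pi,V)$ is $H$-relatively cuspidal.

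I do not expect any real obstacle here: the whole content is already contained in Theorem \ref{6.2}, and the only things to be careful about are the bookkeeping around the zero linear form (handled via Proposition \ref{5.6}(2)) and making sure ``$r_P((V^*)^H)=0$'' is correctly read as ``$r_P$ kills every $H$-invariant form.'' The proof is therefore one short paragraph citing Theorem \ref{6.2} and applying it with $\lambda$ ranging over $(V^*)^H$.
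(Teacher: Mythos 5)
Your proposal is correct and matches the paper exactly: the paper likewise treats Theorem \ref{6.9} as an immediate consequence of Theorem \ref{6.2}, obtained by quantifying $\lambda$ over all of $(V^*)^H$ and invoking the definition of $H$-relative cuspidality. The extra care you take with the zero linear form is harmless and reasonable, though the paper leaves it implicit.
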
 

\begin{Remark}\label{6.10} 
If all $\sigma$-split parabolic 
$F$-subgroups are $H$-conjugate to one in a fixed 
standard class (see Lemma \ref{2.5} (2) for example), then by 
Lemma \ref{6.6}, 
it turns out that 
$r_P((V^*)^H)$ vanishes for all proper $\sigma$-split $P$ if 
and only if it does 
for all standard proper $P$. Thus, in such a case, 
the relative cuspidality is 
characterized by the vanishing of $r_P$ only for 
all standard proper $\sigma$-split $P$. 
\end{Remark} 

\subsection{\it The group case} \label{6.11}
The above characterization of the relative cuspidality 
gives the well-known theorem due to Jacquet in 
the group case. Let 
$\pi_1\otimes\widetilde{\pi_1}$ be an irreducible 
$\Delta G_1$-distinguished representation of $G_1\times G_1$ 
with the canonical invariant linear form $\lambda$ (see 
\ref{1.5}). For a $\sigma$-split parabolic $P_1\times P_1^-$, 
the linear form $r_{P_1\times P_1^-}(\lambda)$ is regarded as the 
Casselman's pairing 
of $(\pi_1)_{P_1}$ and $(\widetilde{\pi_1})_{P_1^-}$ 
(see \ref{5.7}). 
It vanishes if and only if $(\pi_1)_{P_1}$ vanishes since 
Casselman's pairing is 
non-degenerate \cite[4.2.4]{C}. Thus, 
Theorem \ref{6.9} in the group case 
asserts that $\pi_1$ is cuspidal 
if and only if $(\pi_1)_{P_1}=0$ for all proper parabolic 
subgroup $P_1$ of $G_1$. 

\section{Relative subrepresentation theorem} \label{S7} 
Here we give a proof of the relative version of 
Jacquet's subrepresentation theorem. From an 
inductive argument using 
the Frobenius reciprocity and the transitivity of 
the mapping $r_P$, it is a natural 
consequence of the characterization theorem in 
the previous section. 

\begin{Theorem} \label{7.1}
Let $\pi$ be an irreducible $H$-distinguished 
admissible 
representation of $G$. Then there exists a 
$\sigma$-split parabolic subgroup $P=M\ltimes U$ 
of $G$ and an irreducible $M\cap H$-relatively
cuspidal representation $\rho$ of $M$ such 
that $\pi$ is a subrepresentation of 
${\rm{Ind}}_P^G(\rho)$. 
\end{Theorem}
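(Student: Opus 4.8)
The plan is to mimic Casselman's inductive proof of Jacquet's subrepresentation theorem, using Theorem \ref{6.9} in place of the classical characterization of cuspidality. First I would induct on the $F$-split rank of a maximal $(\sigma,F)$-split torus of $\mathbf{G}$ (equivalently, on the semisimple rank of the symmetric space $G/H$), or more simply on $\dim\mathbf{S}_0$. If $\pi$ itself is $H$-relatively cuspidal, we are done by taking $P=G$, $M=G$ and $\rho=\pi$. So assume $\pi$ is not $H$-relatively cuspidal. Since $\pi$ is irreducible, the space $(V^*)^H$ may be more than one-dimensional, but for the argument it suffices to fix one non-zero $\lambda\in(V^*)^H$ with respect to which $\pi$ fails to be relatively cuspidal; by Theorem \ref{6.2} there is a proper $\sigma$-split parabolic subgroup $P=M\ltimes U$ with $r_P(\lambda)\neq 0$.

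Next I would pass to the Jacquet module. By Proposition \ref{5.6}, $r_P(\lambda)$ is a non-zero $M\cap H$-invariant linear form on $V_P$, so $(\pi_P,V_P)$ is an admissible, $M\cap H$-distinguished representation of the smaller group $M$, and it has finite length because $\pi$ does. Passing to an irreducible quotient: $r_P(\lambda)$ is non-zero on some irreducible subquotient, but to get an honest irreducible quotient carrying a non-zero invariant form I would argue as in the proof of Proposition \ref{1.11} — more carefully, I want an irreducible quotient $(\rho_0,W)$ of $V_P$ on which the image of $r_P(\lambda)$ is still non-zero. Since $r_P(\lambda)$ factors through $V_P/\mathrm{Ker}(r_P(\lambda))$, and this quotient has finite length, Lemma \ref{1.7} together with the argument of Proposition \ref{1.11} produces an irreducible subquotient that is realized as a quotient of $V_P$ and is $M\cap H$-distinguished; call it $\rho_0$. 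By the inductive hypothesis applied to the $M\cap H$-distinguished irreducible representation $\rho_0$ of $M$ (whose associated symmetric space $M/(M\cap H)$ has strictly smaller rank, because $P$ is proper and $\sigma$-split so $\mathbf{S}_I\subsetneq\mathbf{S}_0$), there is a $\sigma$-split parabolic subgroup $M\cap Q$ of $M$ — where $Q$ is a $\sigma$-split parabolic subgroup of $G$ contained in $P$ with $\sigma$-stable Levi $L$ — and an irreducible $L\cap H$-relatively cuspidal representation $\rho$ of $L$ with $\rho_0\hookrightarrow\mathrm{Ind}_{M\cap Q}^M(\rho)$.

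Then I would assemble the embedding. Dualizing the surjection $V_P\twoheadrightarrow W$ and using Frobenius reciprocity for the Jacquet functor (the adjunction $\mathrm{Hom}_M(V_P,\rho_0)\cong\mathrm{Hom}_G(V,\mathrm{Ind}_P^G(\rho_0))$, in the normalized form used in \cite{C}), the non-zero map $V\twoheadrightarrow V_P\twoheadrightarrow W$ corresponds to a non-zero $G$-morphism $\pi\to\mathrm{Ind}_P^G(\rho_0)$, which is injective because $\pi$ is irreducible. Composing with $\mathrm{Ind}_P^G$ applied to $\rho_0\hookrightarrow\mathrm{Ind}_{M\cap Q}^M(\rho)$ and using induction in stages $\mathrm{Ind}_P^G\circ\mathrm{Ind}_{M\cap Q}^M\cong\mathrm{Ind}_Q^G$, I obtain an embedding $\pi\hookrightarrow\mathrm{Ind}_Q^G(\rho)$ with $Q$ a $\sigma$-split parabolic subgroup of $G$, its $\sigma$-stable Levi $L$, and $\rho$ an irreducible $L\cap H$-relatively cuspidal representation of $L$. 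Renaming $(Q,L)$ as $(P,M)$ finishes the proof.

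I expect the main obstacle to be the step of extracting an irreducible quotient of $V_P$ that still sees a non-zero $M\cap H$-invariant form: the image of $r_P(\lambda)$ in the dual of an arbitrary irreducible subquotient need not be non-zero, and $r_P(\lambda)$ need not be continuous in any sense that makes the quotient construction automatic, so one must combine the finite-length decomposition of $V_P/\mathrm{Ker}(r_P(\lambda))$ (via Lemma \ref{1.7}) with the observation that a non-zero invariant form on a representation of finite length descends to a non-zero invariant form on at least one irreducible quotient — essentially the content of the proof of Proposition \ref{1.11}, which must be invoked here with $\rho_0$ playing the role of $\overline{\pi}$. A secondary technical point is checking that the rank of $M/(M\cap H)$ is genuinely smaller than that of $G/H$, i.e. that $\mathbf{S}_I$ is a proper subtorus of $\mathbf{S}_0$ whenever $P=P_I$ is a proper $\sigma$-split parabolic, which follows from \ref{2.7} and the fact that $I\supsetneq\Delta_\sigma$ forces $\dim\mathbf{S}_I<\dim\mathbf{S}_0$; this is what makes the induction terminate, with the base case being the relatively cuspidal $\pi$ itself.
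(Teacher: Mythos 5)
Your overall plan is the right one, and matches the paper's strategy in its broad outlines (induction on the $(\sigma,F)$-split rank, base case via \ref{6.9}, Frobenius reciprocity plus induction in stages to assemble the embedding). But there is a genuine gap precisely at the step you yourself flagged as the main obstacle, and your proposed resolution of it does not work.

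The issue is the extraction of an irreducible $M\cap H$-distinguished quotient $\rho_0$ of $V_P$. You appeal to ``the observation that a non-zero invariant form on a representation of finite length descends to a non-zero invariant form on at least one irreducible quotient,'' attributing this to the proof of Proposition \ref{1.11}. This observation is \emph{false} in general: an $H$-invariant linear form $\lambda$ on a finite length module need not vanish on any invariant complement of an irreducible submodule, and when it does not, it neither factors through an irreducible quotient nor restricts to one. Proposition \ref{1.11} does not prove such a statement; it crucially assumes \emph{relative cuspidality}. Its proof works by identifying $V/\mathrm{Ker}(T_\lambda)$ with a subspace of $C^\infty_0(G/H)$, twisting into $C^\infty_{0,\Im(\omega)}(G/H)$, and invoking the pre-unitarity of \ref{1.8}; pre-unitarity is exactly what guarantees complete reducibility, hence an irreducible quotient. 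Without knowing that the image of $T_{r_P(\lambda)}$ lands in $C^\infty_0(M/(M\cap H))$ — i.e., without knowing $\pi_P$ is $(M\cap H, r_P(\lambda))$-relatively cuspidal — there is no pre-unitarity and the argument collapses.

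The paper closes this gap by choosing $Q$ \emph{minimal} among proper $\sigma$-split parabolics with $r_Q(\lambda)\neq 0$. Then for any proper $\sigma$-split $R\subsetneq Q$, the transitivity $r_{M_Q\cap R}\circ r_Q = r_R$ (Proposition \ref{5.9}) together with $r_R(\lambda)=0$ (minimality) gives $r_{M_Q\cap R}(r_Q(\lambda))=0$ for all proper $\sigma$-split parabolics $M_Q\cap R$ of $M_Q$, and then Theorem \ref{6.2} applied inside $M_Q$ shows $\pi_Q$ is $(M_Q\cap H, r_Q(\lambda))$-relatively cuspidal. Only at that point is Proposition \ref{1.11} applicable. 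You already cite Proposition \ref{5.9} in your set-up but invoke it only in the direction of relating $r_Q$ to $r_P\circ r_{M\cap Q}$ at the last assembly step; the missing idea is to use it in the opposite direction, via minimality of $Q$, to establish cuspidality of the Jacquet module before extracting the irreducible quotient. Once that is inserted, the rest of your argument goes through unchanged.
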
 
\begin{proof} 
We show this by induction on 
the dimension $r$ of the maximal 
$(\sigma,F)$-split tori of
$G/Z$. If $r=0$, 
then $G/ZH$ is compact by \cite[4.3]{HW}. 
Hence every $H$-distinguished representation is $H$-relatively 
cuspidal. Assume $r>0$. If $(\pi, V)$ is $H$-relatively 
cuspidal, there is nothing to prove. If not, then 
there exists a non-zero $H$-invariant linear form 
$\lambda\in
(V^*)^H$ such that 
$\pi$ is not $(H, \lambda)$-relatively cuspidal. 
It follows from Theorem \ref{6.2} that there exists a 
proper $\sigma$-split 
parabolic subgroup $P$ of $G$ 
such that $r_P(\lambda)\ne 0$. Let $Q=M_Q\ltimes U_Q$ be 
minimal among such. Then by Proposition \ref{5.9} and 
Theorem \ref{6.2}, it is seen that 
the Jacquet module $\pi_Q$ 
is $(M_Q\cap H, r_Q(\lambda))$-relatively 
cuspidal. Apply Proposition \ref{1.11} and take an irreducible 
$M_Q\cap H$-distinguished 
quotient $\rho'$ of $\pi_Q$. 
By the Frobenius reciprocity asserting that 
$$
{\rm{Hom}}_G(\pi,{\rm{Ind}}_Q^G(\rho'))
\simeq
{\rm{Hom}}_{M_Q}(\pi_Q,\rho')\,\left(\ne 0\right),
$$
we have an embedding of $\pi$ into 
${\rm{Ind}}_Q^G(\rho')$. Now the dimension of 
maximal 
$(\sigma,F)$-split tori in 
$M_Q/A_Q$ 
(where 
$A_Q$ denotes 
the $F$-split component of
$M_Q$) is strictly less than $r$. 
By the induction hypothesis 
applied to $\rho'$, 
there exists a $\sigma$-split parabolic 
subgroup 
$M_Q\cap P=M_P\ltimes U_P$ of $M_Q$ and an 
irreducible 
$M_P\cap H$-relatively cuspidal representation $\rho$ 
of $M_P$ such that $\rho'$ is a subrepresentation of 
${\rm{Ind}}_{M_Q\cap P}^{M_Q}(\rho)$. Here 
$P$ is a $\sigma$-split parabolic subgroup of 
$G$ contained in $Q$ and $M_P$ is the 
$\sigma$-stable Levi subgroup of $P$ (see \ref{5.8}). 
As a consequence, 
$\pi$ is a subrepresentation 
of ${\rm{Ind}}_Q^G\left({\rm{Ind}}_{M_Q\cap P}^{M_Q}
(\rho)\right)
\simeq {\rm{Ind}}_P^G(\rho)$. 
\end{proof}

\subsection{{\it The group case.}} \label{7.2}
We shall apply the above theorem to the group case. 
For any irreducible admissible $\Delta G_1$-distinguished 
representation $\pi_1\otimes\widetilde{\pi_1}$ of 
$G_1\times G_1$, there exist a $\sigma$-split 
parabolic subgroup $P_1\times P_1^-$ and an irreducible 
$\Delta M_1$-relatively cuspidal representation 
$\rho_1\otimes\widetilde{\rho_1}$ of $M_1\times M_1$ 
such that $\pi_1\otimes\widetilde{\pi_1}$ can be embedded in 
$$
{\rm{Ind}}_{P_1\times P_1^-}^{G_1\times G_1}
(\rho_1\otimes\widetilde{\rho_1})
\simeq 
{\rm{Ind}}_{P_1}^{G_1}
(\rho_1)\otimes 
{\rm{Ind}}_{P_1^-}^{G_1}
(\widetilde{\rho_1}). 
$$
Here $\rho_1$ is an irreducible cuspidal representation 
of $M_1$. The embedding 
$\pi_1\hookrightarrow {\rm{Ind}}_{P_1}^{G_1}
(\rho_1)$ on the first factor asserts nothing but 
Jacquet's subrepresentation theorem (see \cite[5.1.2]{C}). 
On the second factor we have the embedding 
$\widetilde{\pi_1}\hookrightarrow 
{\rm{Ind}}_{P_1^-}^{G_1}
(\widetilde{\rho_1})$ at the 
same time (see \cite[3.3.1]{S}). 

\section{Examples of relatively cuspidal representations}\label{S8}

This section is devoted to several examples of 
relatively cuspidal distinguished representations. 

\begin{Proposition} \label{8.1} 
Cuspidal 
$H$-distinguished representations are 
$H$-relatively 
cuspidal. 
\end{Proposition}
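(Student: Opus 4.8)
The plan is to derive this directly from the characterization of relative cuspidality established in Theorem~\ref{6.9}. That theorem says that an $H$-distinguished admissible representation is $H$-relatively cuspidal exactly when $r_P\bigl((V^*)^H\bigr)=0$ for every proper $\sigma$-split parabolic subgroup $P$ of $G$, so the entire task reduces to checking this vanishing when $\pi$ is cuspidal.

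First I would fix a proper $\sigma$-split parabolic subgroup $P=M\ltimes U$ of $G$ and examine the Jacquet module $(\pi_P,V_P)$ of $(\pi,V)$ along $P$. Since $\mathbf{P}$ is in particular a proper parabolic $F$-subgroup of $\mathbf{G}$, the classical characterization of cuspidality in terms of Jacquet modules (Casselman \cite{C}; see also the discussion in \ref{6.11}) applies to the cuspidal representation $\pi$ and yields $V_P=0$. The mapping $r_P$ takes values in $(V_P^*)^{M\cap H}$, which is now the zero space, so $r_P(\lambda)=0$ for every $\lambda\in(V^*)^H$. Letting $P$ run over all proper $\sigma$-split parabolic subgroups and applying Theorem~\ref{6.9} then gives the assertion.

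The single delicate point is the use of the implication that cuspidality of $\pi$ forces $V_P=0$ in the stated generality of admissible, not necessarily finite length, representations. This is standard, but if one prefers to stay self-contained it can be recovered here from the asymptotic relation between ordinary matrix coefficients and Jacquet modules provided by Casselman's canonical pairing (\cite[\S4]{C}), of which Proposition~\ref{5.5} is the relative analogue. Beyond this there is no real obstacle: this proposition is a formal consequence of Theorem~\ref{6.9} together with a well-known property of cuspidal representations.
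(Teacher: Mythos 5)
Your proof is correct and is precisely the reasoning the paper has in mind: the paper's proof is the single line ``This is immediate from Theorem \ref{6.9},'' and unpacking that one-liner gives exactly your argument, namely that cuspidality forces $V_P=0$ for every proper parabolic $P$, so in particular $(V_P^*)^{M\cap H}=0$ and $r_P((V^*)^H)=0$ for every proper $\sigma$-split $P$, whence Theorem \ref{6.9} applies. Your caveat about admissible-versus-finite-length is appropriate caution, but the classical vanishing-of-Jacquet-modules characterization of cuspidality in Casselman's notes is stated for admissible representations without a finite-length hypothesis, so there is no real gap.
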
 
\begin{proof} 
This is immediate from Theorem \ref{6.9}. 
\end{proof} 

Certain examples of cuspidal distinguished 
representations were constructed by 
Hakim and Mao \cite{HM1,HM2} 
for the symmetric pairs 
$(GL_n(E), U_n(E/F))$ and 
$(GL_n(F), O_n(F))$. 

In the rest of this section, 
we shall give two examples of 
non-cuspidal but 
relatively cuspidal distinguished representations. 

\subsection{The symmetric pair 
$(GL_n,GL_{n-1}\times GL_1)$ } \label{8.2} 

For $n\geqq 3$, it is known that 
irreducible 
cuspidal representations of 
$GL_n(F)$ do not 
have any non-zero $GL_{n-1}(F)$-invariant 
linear form (see \cite[Proof of Theorem 2]{P}). 
Therefore irreducible cuspidals 
cannot be 
$GL_{n-1}(F)\times 
GL_1(F)$-distinguished. 
We shall construct a class of 
relatively cuspidal representations
below.  For $n=3$ similar representations 
were treated by D. Prasad in \cite{P}. 
Representations attached to 
finite symmetric pair of this type 
was studied in \cite{vD}. 

\subsubsection{} \label{8.2.1}  
Let $\mathbf{G}$ be the group 
$GL_{n/F}$. Put 
$$
\epsilon=\left(\smallmatrix & &1\\
 &1_{n-2}& \\
1& & \endsmallmatrix\right),\quad 
\epsilon'=\left(\smallmatrix 1_{n-1}& \\
 &-1\endsmallmatrix\right),\quad 
\eta=\left(\smallmatrix 1& &1\\
 &1_{n-2}& \\
1& &-1\endsmallmatrix\right).\quad 
$$
We consider the inner involution 
$\sigma={\rm{Int}}
(\epsilon)$
on $\mathbf{G}$. Let $\mathbf{H}$ be the 
$\sigma$-fixed point subgroup in $\mathbf{G}$ and 
$\mathbf{H}'$ the fixed point subgroup of 
the other involution 
${\rm{Int}}(\epsilon')$. We have 
$$
\mathbf{H}'=
\big\{ \left(\smallmatrix A& 0\\ 
0&u\endsmallmatrix\right)\bigm| A\in GL_{n-1},\,
u\in GL_1\big\}. 
$$
By the relation $\epsilon=\eta\epsilon'\eta^{-1}$,  
it is seen that 
$\mathbf{H}=\eta^{-1}
\mathbf{H}'\eta$. 
As a maximal $(\sigma,F)$-split torus, we shall take 
$$
\mathbf{S}_0
=\big\{ {\rm{diag}}(s,1,\cdots, 1,s^{-1})\bigm| 
s\in GL_1
\big\}.
$$
Let $\mathbf{A}_{\emptyset}$ be the maximal 
$F$-split torus consisting of all the diagonal 
matrices and $\Delta$ the set of simple roots of 
$(\mathbf{G},\mathbf{A}_{\emptyset})$ corresponding 
to the Borel subgroup of upper triangular matrices. 
If $e_i\in X^*(\mathbf{A}_{\emptyset})$ (for $1\leqq i\leqq n$)
denotes the $F$-rational character of $\mathbf{A}_{\emptyset}$ 
given by 
$$
e_i\left(
\mathrm{diag}(a_1,a_2,\cdots, a_n)
\right)=a_i,
$$
then $\Delta$ is described as 
$$
\Delta=\{e_i-e_{i+1}\bigm| 1\leqq i\leqq n-1\}. 
$$
It is easy to see that $\Delta$ is a $\sigma$-basis. We have 
$$
\Delta_{\sigma}=\big\{e_2-e_3,
e_3-e_4, \cdots,
e_{n-2}-e_{n-1}\big\}
$$
and 
$$
\sigma(e_1-e_2)=-(e_2-e_n),
\quad 
\sigma(e_{n-1}-e_n)=-(e_1-e_{n-1}). 
$$
There is only one proper standard $\sigma$-split 
parabolic subgroup $\mathbf{P}_0$, 
the minimal one corresponding to 
$\Delta_{\sigma}$. It is the standard 
parabolic subgroup of type 
$(1,n-2,1)$, i.e., 
$$
\mathbf{P}_0=\mathbf{P}_{1,n-2,1}=
\big\{\left(\smallmatrix a& &*\\ 
 &A&\\ 
0& &b\endsmallmatrix\right)\in\mathbf{G}
\bigm | a,b\in GL_1,\,
A\in GL_{n-2} \big\}. 
$$
The $\sigma$-stable Levi subgroup 
$\mathbf{M}_0$ of 
$\mathbf{P}_0$ is given by 
$$
\mathbf{M}_0=\big\{\left(\smallmatrix a& &0\\ 
 &A&\\ 
0& &b\endsmallmatrix\right)\in\mathbf{G}
\bigm | a,b\in GL_1,\,
A\in GL_{n-2} \big\}.  
$$
Note that 
$\sigma$ acts on $\mathbf{M}_0$ 
as 
$$
\sigma\left(\smallmatrix a& &0\\ 
 &A&\\ 
0& &b\endsmallmatrix\right)=
\left(\smallmatrix b& &0\\ 
 &A&\\ 
0& &a\endsmallmatrix\right),
$$ 
hence $\mathbf{M}_0\cap \mathbf{H}$ consists of all the 
matrices of the
form 
$\left(\smallmatrix a& &0\\ 
 &A&\\ 
0& &a\endsmallmatrix\right)$. 
This implies that the Galois cohomology of 
$\mathbf{M}_0\cap \mathbf{H}$ over $F$ is trivial. 
So we have 
$\left(\mathbf{M}_0\mathbf{H}\right)(F)=
M_0H$ and thus every proper $\sigma$-split 
parabolic subgroup of $\mathbf{G}$ is $H$-conjugate to 
$\mathbf{P}_0$ 
in this case by Lemma \ref{2.5} (2). 

\subsubsection{} \label{8.2.2} 
Let $\mathbf{Q}=\mathbf{P}_{n-2,2}$ be the standard parabolic subgroup 
of $\mathbf{G}$ of type $(n-2,2)$, i.e.,  
$$
\mathbf{Q}=\big\{ \left(\smallmatrix A& *\\ 
0&g\endsmallmatrix\right)\bigm| A\in GL_{n-2},\,
g\in GL_2\big\}. 
$$
This is not $\sigma$-split nor $\sigma$-stable, but 
the conjugate $\eta\mathbf{Q}\eta^{-1}$ is 
$\sigma$-stable. 
Take a representation 
$(\rho,W_{\rho})$ of 
$GL_2(F)$ 
and consider the normalized induction 
$$
\pi={\rm{Ind}}_Q^G(1_{GL_{n-2}(F)}
\otimes\rho). 
$$

\begin{PropS8} \label{8.2.3} 
If $\rho$ is an irreducible 
cuspidal representation 
of $GL_2(F)$ with trivial central 
character, then the induced representation 
$$
\pi={\rm{Ind}}_Q^G(1_{GL_{n-2}(F)}
\otimes\rho) 
$$
is irreducible, $H$-distinguished and $H$-relatively 
cuspidal. 
\end{PropS8} 

\indent

The irreducibility follows from the general 
result of \cite[3.2, 4.2]{Z}. 
Up to \ref{8.2.5}, we
show  that 
$\pi$ carries a non-zero $H'$-invariant linear 
form (hence is $H$-distinguished by the relation 
$H=\eta^{-1}H'\eta$). 
Relative cuspidality will be seen in \ref{8.2.6}. 

\subsubsection{} \label{8.2.4} 
Set $\mathcal O=QH'$. This is a closed $(Q, H')$-double coset of 
$G$ since $Q$ is ${\rm Int}(\epsilon')$-stable 
(see \cite[13.3]{HW}). 
Let ${\rm I}(\rho;\mathcal O)$ be the space of all 
locally constant mappings 
$\phi: \mathcal O\to W_{\rho}$ 
satisfying 
\begin{eqnarray*} 
\phi
\left(\left(\smallmatrix A&*\\ 0&g\endsmallmatrix\right)
x\right) &=&
\delta_P\left(\smallmatrix A&*\\ 0&g\endsmallmatrix\right)
^{1/2}
\rho(g)\phi(x)\\ 
&=&|\det(A)|\cdot |\det(g)|^{-(n-2)/2}\cdot\rho(g)\phi(x) 
\end{eqnarray*}
for all $\left(\smallmatrix A&*\\ 0&g\endsmallmatrix\right)
\in Q$ and $x\in \mathcal O$. This is a smooth $H'$-module 
by the right translation. The restriction 
map ${\rm {Ind}}_Q^G(1_{GL_{n-2}(F)}
\otimes\rho) \stackrel{\rm{res}}{\to} 
{\rm I}(\rho;\mathcal O)$ is a surjective $H'$-morphism. 
Taking $H'$-invariants in the dual mapping, we have an 
injection  
$$
\left( {\rm I}(\rho;\mathcal O)^*\right)^{H'}
\hookrightarrow \left({\rm
{Ind}}_Q^G(1_{GL_{n-2}(F)}
\otimes\rho)^*\right)^{H'}. 
$$ 
So it is enough to construct 
a non-zero element of 
$\left( {\rm I}(\rho;\mathcal O)^*\right)^{H'}$. 

\subsubsection{} \label{8.2.5} 
Set $R=Q\cap H'$ and $\rho_R=\left(
(1_{GL_{n-2}(F)}\otimes\rho)
\delta_Q^{1/2}
\right)|_R$. Explicitly the subgroup $R$ is given by 
$$
\font\b=cmsl9 scaled \magstep2 
\def\bigA{\smash{\hbox{\b A}}}
R=\left\{\left(\smallmatrix 
 & & &*&0\\
 &\bigA& &\vdots&\vdots \\
 & & &*&0\\
0&\ldots&0&a&0\\
0&\ldots&0&0&d
\endsmallmatrix\right) \Biggm| A\in
GL_{n-2}(F),\,a,d
\in F^{\times}\right\}
$$
and the representation $\rho_R$ of 
$R$ on $W_{\rho}$ is given by 
$$
\font\b=cmsl9 scaled \magstep2 
\def\bigA{\smash{\hbox{\b A}}}
\rho_R\left(\smallmatrix 
 & & &*&0\\
 &\bigA& &\vdots&\vdots \\
 & & &*&0\\
0&\ldots&0&a&0\\
0&\ldots&0&0&d
\endsmallmatrix\right)=
\big|\det(A)\big|\cdot 
|a|^{-(n-2)/2}\cdot |d|^{(n-2)/2}\cdot 
\rho\left(\smallmatrix a&0\\ 0&d\endsmallmatrix\right). 
$$
It is seen that $\phi |_{H'}$ belongs to the 
unnormalized induction ${\rm{ind}}_R^{H'}
(\rho_R)$ for all $\phi\in{\rm I}(\rho;\mathcal O)$. 
Furthermore, the mapping 
$\phi\mapsto\phi |_{H'}$ gives an isomorphism 
$$
{\rm I}(\rho;\mathcal O)\simeq 
{\rm{ind}}_R^{H'}
(\rho_R)
$$
of $H'$-modules. 
By a version of Frobenius reciprocity 
\cite[2.4.3]{C}, we have isomorphisms
$$
\left({\rm I}(\rho;\mathcal O)^*\right)^{H'}
\simeq 
{\rm{Hom}}_{H'}
\left({\rm{ind}}_R^{H'}
(\rho_R),\Bbb C\right)\simeq 
{\rm{Hom}}_R\left(\rho_R,\Bbb C_{\delta_{R}}\right)
$$
where $\Bbb C_{\delta_{R}}$ is the one 
dimensional representation of $R$ defined 
by the modulus 
character $\delta_{R}$ of $R$. Note that 
$\delta_{R}$ is given by 
$$
\font\b=cmsl9 scaled \magstep2 
\def\bigA{\smash{\hbox{\b A}}}
\delta_R\left(\smallmatrix 
 & & &*&0\\
 &\bigA& &\vdots&\vdots \\
 & & &*&0\\
0&\ldots&0&a&0\\
0&\ldots&0&0&d
\endsmallmatrix\right)
=\big|\det(A)\big|\cdot |a|^{-(n-2)}.  
$$
So the space ${\rm{Hom}}_R\left(
\rho_R,\Bbb C_{\delta_{R}}\right)$ consists of all the 
linear forms $\nu: W_{\rho}\to\Bbb C$ satisfying 
$$
\langle \nu,\rho\left(\smallmatrix a&0\\ 
0&d\endsmallmatrix\right)w\rangle=
\big|a\big|^{-(n-2)/2}\cdot\big|d\big|^{(n-2)/2}
\langle\nu,w\rangle 
$$
for all $w\in W_{\rho}$ and $a$, $d\in F^{\times}$. 
Under the assumption that $\rho$ is 
cuspidal with trivial
central character, we can construct such a 
non-zero linear form 
$\nu: W_{\rho}\to\Bbb C$ by 
using the {\it Kirillov model} 
$\mathcal K_{\rho}$ of 
$\rho$ (see \cite{JL}). 
The space $\mathcal K_{\rho}$ is $\mathcal C^{\infty}_c(F^{\times})$, 
on which the action of the diagonals is described as 
$$
\left(\rho\left(\smallmatrix a&0\\ 0&1\endsmallmatrix
\right)\xi\right)(x)=\xi(ax)
$$
for $\xi\in\mathcal K_{\rho}$, $x\in F^{\times}$. 
Let $\nu$ be the linear form on $\mathcal K_{\rho}$ defined by 
$$
\langle\nu,\xi\rangle=\int_{F^{\times}}\big|x\big|^{(n-2)/2}
\xi(x)d^{\times}x
$$
where $d^{\times}x$ denotes a Haar measure on 
the multiplicative group $F^{\times}$. 
It behaves under the action of 
$\left(\smallmatrix a&0\\ 
0&d\endsmallmatrix\right)=
\left(\smallmatrix d&0\\ 
0&d\endsmallmatrix\right)
\left(\smallmatrix ad^{-1}&0\\ 
0&d\endsmallmatrix\right)$ as 
\begin{eqnarray*}
& &\langle\nu,\rho
\left(\smallmatrix a&0\\ 
0&d\endsmallmatrix\right)
\xi\rangle=\langle\nu,\rho
\left(\smallmatrix ad^{-1}&0\\ 
0&1\endsmallmatrix\right)
\xi\rangle
=\int_{F^{\times}}
\big|x\big|^{(n-2)/2}
\xi(ad^{-1}x)d^{\times}x \\
& &=\int_{F^{\times}}
\big|xa^{-1}d\big|^{(n-2)/2}
\xi(x)d^{\times}x
=\big|a\big|^{-(n-2)/2}\cdot\big|d\big|^{(n-2)/2}
\langle\nu,\xi\rangle. 
\end{eqnarray*}
This is the desired property. 

\subsubsection{} \label{8.2.6} 
Finally we show that $\pi={\rm {Ind}}_Q^G
(1_{GL_{n-2}(F)}
\otimes\rho)$ is $H$-relatively cuspidal. Assume the contrary. 
Recall that any proper $\sigma$-split parabolic subgroup is 
$H$-conjugate to $P_0$. 
By Theorem \ref{7.1}, there exists an irreducible 
$M_0\cap H$-distinguished (relatively cuspidal) 
representation, say $\chi_1\otimes\theta\otimes\chi_2$, 
of $M_0\simeq F^{\times}\times GL_{n-2}(F)\times F^{\times}$ 
such that $\pi$ can be embedded in 
${\rm{Ind}}_{P_0}^G(\chi_1\otimes\theta\otimes\chi_2)$. 
We must have 
$\theta=1_{GL_{n-2}(F)}$ and 
$\chi_2=\chi_1^{-1}$ by the $M_0\cap H$-distinguishedness. 
Now we have an 
embedding 
$$
{\rm {Ind}}_{P_{n-2,2}}^G
(1_{GL_{n-2}(F)}
\otimes\rho)\hookrightarrow 
{\rm{Ind}}_{P_{1,n-2,1}}^G(\chi_1\otimes 
1_{GL_{n-2}(F)}
\otimes\chi_1^{-1}),
$$
which contradicts to the cuspidality of 
$\rho$. 

\subsection{The symmetric pair 
$(GL_{2n},Sp_n)$} \label{8.3}

This kind of symmetric pair 
was studied by Heumos and Rallis 
in \cite{HR} (cf. \cite{BKS} for the finite field case). 
They have shown that 
$Sp_n(F)$-models and (non-degenerate) Whittaker 
models of $G=GL_{2n}(F)$ are 
disjoint. It turns out that there is no 
irreducible cuspidal $Sp_n(F)$-distinguished 
representation, since cuspidals always have 
Whittaker model. In the following, we shall 
see that irreducible $Sp_n(F)$-distinguished 
Langlands quotient representations of 
$GL_{2n}(F)$ constructed in \cite[11.3]{HR} 
are $Sp_n(F)$-relatively cuspidal 
if the inducing representations of 
$GL_n(F)$ are cuspidal in the 
usual sense. 

\subsubsection{} \label{8.3.1} 
Let $\mathbf{G}$ be the group 
$GL_{2n\,/F}$ and $J_n\in\mathbf{G}$ 
the alternating matrix given by 
$$
J_n=\left(\smallmatrix 0&1& & & & & \\
-1&0& & & & &  \\
 & &0&1& & & \\
 & &-1&0& & & \\
 & & & &\ddots& & \\
 & & & & &0&1\\
 & & & & &-1&0
\endsmallmatrix\right).
$$
Consider the involution $\sigma$ on $\mathbf{G}$ defined 
by 
$$
\sigma(g)=J_n{}^tg^{-1}J_n^{-1}. 
$$
The $\sigma$-fixed point subgroup 
$\mathbf{H}$ of $\mathbf{G}$ is 
the symplectic group with respect 
to $J_n$. 
As a maximal $(\sigma,F)$-split torus we take 
$$
\mathbf{S}_0
=\big\{ {\rm{diag}}(s_1,s_1,
s_2,s_2,\cdots, s_n,,s_n)\bigm|s_i\in
GL_1
\big\}. 
$$
Let $\mathbf{A}_{\emptyset}$ be the maximal 
$F$-split torus consisting of all the diagonal 
matrices and $\Delta$ the set of simple roots of 
$(\mathbf{G},\mathbf{A}_{\emptyset})$ corresponding 
to the Borel subgroup of upper triangular matrices. 
Then $\Delta$ is a $\sigma$-basis: We have 
$$
\sigma(e_i-e_{i+1})
=\begin{cases} 
e_i-e_{i+1}\quad\text{if $i$ is odd,} \\ 
-(e_{i-1}-e_{i+2})\quad\text{if $i$ is even} 
\end{cases} 
$$
where the notation is as in \ref{8.2.1}. 
The minimal $\sigma$-split parabolic 
subgroup $\mathbf{P}_0$ corresponding to $\Delta_{\sigma}$ is 
the standard one of type $(2,2,\cdots,2)$. The $\sigma$-stable 
Levi subgroup $\mathbf{M}_0$ of $\mathbf{P}_0$ 
is isomorphic to the product of 
$n$ copies of $GL_2$. Looking at the action of 
$\sigma$ on $\mathbf{M}_0$, it is seen that 
$\mathbf{M}_0\cap\mathbf{H}$ is isomorphic to 
the product of $n$ copies of $SL_2$. 
Again by the triviality of the Galois cohomology of 
$\mathbf{M}_0\cap\mathbf{H}$ and 
Lemma \ref{2.5} (2), every $\sigma$-split parabolic 
subgroup is 
$H$-conjugate to a standard one. 
Maximal standard 
$\sigma$-split 
parabolics are ones of type $(m, 2n-m)$ where 
$m$ is even and  
any maximal $\sigma$-split parabolics are 
$H$-conjugate to such ones. 

\subsubsection{} \label{8.3.2}
Let $J_n'$ be the alternating matrix given by
$$
J_n'=\begin{pmatrix} 
0&1_n\\ -1_n&0\end{pmatrix} 
$$
and $\mathbf{H}'$ the symplectic 
group with respect to $J_n'$. 
There is an element $\eta\in G$ such that $J_n=\eta J_n'{}^t\eta$, 
which gives the relation
$H=\eta^{-1}H'\eta$. Thus an admissible representation
$(\pi,V)$ of $G$ is $H$-distinguished if and only if it is 
$H'$-distinguished. 

\subsubsection{} \label{8.3.3}
Let $\mathbf{Q}=\mathbf{P}_{n,n}$ be the standard parabolic 
subgroup of type $(n,n)$ given by  
$$
\mathbf{Q}=\left\{ \left(\smallmatrix A& *\\ 
0&D\endsmallmatrix\right)\bigm| A,\,D\in GL_n\,
\right\}. 
$$
The conjugate $\eta\mathbf{Q}\eta^{-1}$ is 
$\sigma$-stable. 
Take an irreducible admissible 
representation $\rho$ of $GL_n(F)$ and form 
the normalized induction 
$$
I_{\rho}={\rm{Ind}}_Q^G\left(
\rho\bigl|\det(\cdot)\bigr|^{1/2}
\otimes\rho\bigl|\det(\cdot)\bigr|^{-1/2}\right).
$$
In \cite[11.3.1.2]{HR}, it is shown that 
$I_{\rho}$ is $H'$-distinguished, reducible of length $2$ 
and the unique irreducible quotient 
$\pi_{\rho}$ of $I_{\rho}$ also is 
$H'$-distinguished (hence $H$-distinguished). 

\begin{PropS8} \label{8.3.4}
If $\rho$ is an irreducible 
cuspidal representation 
of $GL_n(F)$, then 
$\pi_{\rho}$ is $H$-relatively cuspidal. 
\end{PropS8} 
\begin{proof} 
By \ref{6.10} and the remark at the end of \ref{8.3.1}, 
it is enough to see that 
$r_P\left((\pi_{\rho}^*)^H\right)=0$ for all 
maximal standard $\sigma$-split parabolic subgroup $P$. 
We show that 
$(\pi_{\rho})_P$ cannot be $M\cap H$-distinguished 
for any maximal 
$\sigma$-split parabolic $P=P_{m,2n-m}=M\ltimes U$. 
Except for the case that $m=n$ and $n$ is even, 
$P$ is not associated to $Q$, hence $(I_{\rho})_P$ 
(and also $(\pi_{\rho})_P$) vanishes by the cuspidality 
of $\rho$ (see \cite[2.13(a)]{BZ}). 
If $n$ is even and $m=n$ (namely $P=Q$), then 
$(I_{\rho})_P$ (and also $(\pi_{\rho})_P$) is 
cuspidal by \cite[2.13(c)]{BZ}. 
In this case, $M=GL_n(F)\times GL_n(F)$ 
and $M\cap H=Sp_n(F)\times Sp_n(F)$, 
so the cuspidal $M$-module $(\pi_{\rho})_P$ cannot be 
$M\cap H$-distinguished by the remark 
at the beginning of \ref{8.3}. 
\end{proof}

\end{document}